\newtheorem{theorem}{Theorem}[section]
\newtheorem{claim}[theorem]{Claim}
\newtheorem{proposition}[theorem]{Proposition}
\newtheorem{corollary}[theorem]{Corollary}
\newtheorem{lemma}[theorem]{Lemma}
\newtheorem{question}[theorem]{Question}
\theoremstyle{definition}
\newtheorem{remark}[theorem]{Remark}
\newtheorem{definition}[theorem]{Definition}
\newtheorem{example}[theorem]{Example}
\newcommand{\IN}{\mathbb N}
\newcommand{\IQ}{\mathbb Q}
\newcommand{\IC}{\mathbb C}
\newcommand{\IT}{\mathbb T}
\newcommand{\IZ}{\mathbb Z}
\newcommand{\F}{\mathcal F}
\newcommand{\U}{\mathcal U}
\newcommand{\C}{\mathcal C}
\newcommand{\V}{\mathcal V}
\newcommand{\w}{\omega}
\newcommand{\Fil}{\mathrm{\varphi}}
\newcommand{\Ra}{\Rightarrow}
\newcommand{\Haus}{\mathsf{T_{\!2}S}}
\newcommand{\Zero}{\mathsf{T_{\!z}S}}
\newcommand{\Tau}{\mathcal T}
\newcommand{\korin}[2]{\!\sqrt[#1]{\!#2}}
\title{Characterizing categorically closed commutative semigroups}
\author{Taras Banakh, Serhii Bardyla}
\address{T.Banakh: Ivan Franko National University of Lviv (Ukraine) and Jan Kochanowski University in Kielce (Poland)}
\email{t.o.banakh@gmail.com}
\address{S.~Bardyla: University of Vienna, Institute of Mathematics, Kurt G\"{o}del Research Center (Austria)}
\thanks{The second author was supported by the Austrian Science Fund FWF (Grant  M 2967).}
\email{sbardyla@yahoo.com}
\subjclass[2020]{22A15, 20M14, 18B40}
\keywords{$\C$-closed semigroup, commutative semigroup, semilattice, group, chain-finite semigroup, periodic semigroup}
\begin{document}
\begin{abstract} Let $\C$ be a class of $T_1$ topological semigroups which contains all zero-dimensional Hausdorff topological semigroups. A semigroup $X$ is called {\em $\C$-closed} if
$X$ is closed in each topological semigroup $Y\in \C$ containing $X$ as a discrete subsemigroup;  $X$ is {\em projectively $\C$-closed} if for each congruence $\approx$ on $X$ the quotient semigroup $X/_\approx$  is $\C$-closed. A semigroup $X$ is called {\em chain-finite} if for any infinite set $I\subseteq X$ there are elements $x,y\in I$ such that $xy\notin\{x,y\}$. We prove that a semigroup $X$ is $\C$-closed if it admits a homomorphism $h:X\to E$ to a chain-finite semilattice $E$ such that for every $e\in E$ the semigroup $h^{-1}(e)$ is $\C$-closed. Applying this theorem, we prove that a commutative semigroup $X$ is $\C$-closed if and only if $X$ is periodic, chain-finite, all subgroups of $X$ are bounded,  and for any infinite set $A\subseteq X$ the product $AA$ is not a singleton. A commutative semigroup $X$ is projectively $\C$-closed if and only if $X$ is chain-finite, all subgroups of $X$ are bounded and the union $H(X)$ of all subgroups in $X$ has finite complement $X\setminus H(X)$.
\end{abstract}
\maketitle

\section{Introduction and Main Results}

In many cases,  completeness properties of various objects of General Topology or  Topological Algebra can be characterized externally as closedness in ambient objects. For example, a metric space $X$ is complete if and only if $X$ is closed in any metric space containing $X$ as a subspace. A uniform space $X$ is complete if and only if $X$ is closed in any uniform space containing $X$ as a uniform subspace. A topological group $G$ is Ra\u\i kov complete  if and only if it is closed in any topological group containing $G$ as a subgroup.

On the other hand, for topological semigroups there are no reasonable notions of (inner) completeness. Nonetheless we can define many completeness properties of  semigroups via their closedness in ambient topological semigroups. 

A {\em topological semigroup} is a topological space $X$ endowed
with a continuous associative binary operation $X\times X\to
X$, $(x,y)\mapsto xy$.

\begin{definition}
Let $\C$ be a class of topological semigroups.

A topological semigroup $X$ is called  {\em $\C$-closed\/} if for any isomorphic topological
embedding $h:X\to Y$ to a topological semigroup $Y\in\C$ the image $h[X]$ is closed in $Y$.

A semigroup $X$ is called {\em $\C$-closed} if so is the topological semigroup $X$ endowed with the discrete topology.
\end{definition}

$\C$-closed topological groups for various classes $\C$ were investigated by many authors including  Arhangel’skii, Banakh, Choban, Dikranjan, Goto, Luka\'sc and Uspenskij~\cite{AC,Ban,DU,G,L,U}. Closedness of commutative topological groups in the class of Hausdorff topological semigroups was investigated by Keyantuo and Y. Zelenyuk~\cite{Z3,Z1}. Semigroup compactifications of locally compact commutative groups were investigated by Zelenyuk~\cite{Z2,Z4};
$\mathcal{C}$-closed topological semilattices were investigated by Gutik, Repov\v{s}, Stepp and the authors in~\cite{BBm, BB, BBs, BBw, BBG, BBR, GutikPagonRepovs2010, GutikRepovs2008,Stepp69,Stepp75}.
For more information about complete topological semilattices and pospaces we refer to the recent survey of the authors~\cite{BBc}.


We shall be interested in the $\C$-closedness
for the classes:
\begin{itemize}
\item $\Haus$ of Hausdorff topological semigroups;
\item $\Zero$ of zero-dimensional Hausdorff topological
semigroups;
\item $\mathsf{T_{\!1}S}$ of topological semigroups satisfying the separation axiom $T_1$.
\end{itemize}

Recall that a topological space is {\em zero-dimensional} if it has a base of
the topology consisting of {\em clopen} (=~closed-and-open)
subsets. A topological space $X$ satisfies the separation axiom $T_1$ if each finite subset is closed in $X$.

Since $\Zero\subseteq\Haus\subseteq\mathsf{T_{\!1}S}$, for every semigroup we have the implications:
$$\mbox{$\mathsf{T_{\!1}S}$-closed $\Ra$ $\mathsf{T_{\!2}S}$-closed $\Ra$ $\mathsf{T_{\!z}S}$-closed}.$$
From now on we assume that $\C$ is a class of topological semigroups such that
$$\Zero\subseteq\C\subseteq\mathsf{T_{\!1}S}.$$

Now we recall two known characterizations of $\C$-closedness for semilattices and groups.

Let us mention that a semigroup $X$ is called a {\em semilattice} if it is commutative and each element $x\in X$ is an {\em idempotent}, which means that $xx=x$. Each semilattice $E$ carries a partial order $\le$ defined by $x\le y$ if and only if $xy=x$. In this case $xy=\inf\{x,y\}$. Given two elements $x,y$ of a semilattice  we write $x<y$ if $x\le y$ and $x\ne y$.

A subset $C$ of a semigroup $X$ is called a {\em chain} if $xy\in\{x,y\}$ for any elements $x,y\in C$. A subset $C$ of a semilattice is a chain if and only if any elements $x,y\in C$ are comparable in the partial order $\le$.

A semigroup $X$ is called {\em chain-finite} if $X$ contains no infinite chains. Observe that a commutative semigroup $X$ is chain-finite if and only if its maximal semilattice $E(X)=\{x\in X:xx=x\}$ is chain-finite. Lemma 2.6 from~\cite{BBc} implies that a chain-finite semilattice $S$ is down-complete with respect to the natural partial order $\leq$, that is  every nonempty subset $A\subseteq S$ has the greatest lower bound $\inf A\in S$. In particular, every chain-finite nonempty semilattice contains the least element. Also every chain-finite semilattice contains a maximal element (which is not necessary the greatest element of the semilattice).

The following characterization of $\C$-closed semilattices was proved in \cite{BBm}.

\begin{theorem}\label{BB} A semilattice $X$ is $\C$-closed if and only if $X$ is chain-finite.
\end{theorem}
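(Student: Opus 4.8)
The statement to prove splits into two implications, and I would argue each separately. For the forward implication I establish the contrapositive: if the semilattice $X$ contains an infinite chain, then $X$ fails to be $\C$-closed. Since $\Zero\subseteq\C$, it suffices to produce a \emph{single} zero-dimensional Hausdorff topological semigroup $Y$ containing a non-closed discrete copy of $X$ (or of a suitable subsemilattice). The natural ambient object is the Cantor cube $\{0,1\}^X$ with the coordinatewise operation $\wedge=\min$, which is a compact zero-dimensional Hausdorff topological semilattice, hence lies in $\Zero$. The semilattice $X$ embeds into it by the down-set map $e\colon x\mapsto\chi_{{\downarrow}x}$, where ${\downarrow}x=\{y\in X:y\le x\}$; this is an injective homomorphism because $e(xx')=\chi_{{\downarrow}(xx')}=\chi_{{\downarrow}x\cap{\downarrow}x'}=e(x)\wedge e(x')$, and the coordinates $x\mapsto[\,c\le x\,]$ separate points.

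Given an infinite chain in $X$, Ramsey's theorem lets me thin it to a strictly monotone sequence $(x_n)$. For an increasing sequence the images $e(x_n)=\chi_{{\downarrow}x_n}$ converge in the cube to $\chi_{D}$ with $D=\bigcup_n{\downarrow}x_n$ the down-closure of the chain, while for a decreasing sequence they converge to $\chi_{D}$ with $D=\bigcap_n{\downarrow}x_n$ the set of common lower bounds. In either case the limit is a down-set $D$, and one checks that $D$ is non-principal precisely when the chain has no supremum (respectively infimum) realized by an element of $X$ with nothing extra below it, a situation one can arrange after passing to a subchain. Then $\chi_D\in\overline{e[X]}\setminus e[X]$ exhibits a limit point outside the image.

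The main obstacle in this direction is that $\C$-closedness requires $X$ to sit inside $Y$ as a \emph{discrete} subsemigroup, whereas the subspace topology that $e[X]$ inherits from the cube need not be discrete. I would handle this by retopologizing the closed subsemilattice $S=\overline{e[X]}$: declare every point of $e[X]$ isolated while keeping the cube neighbourhoods at the points of $S\setminus e[X]$. This refinement is again zero-dimensional and Hausdorff, and $e[X]$ becomes a discrete, non-closed subsemigroup. The delicate point is that $\wedge$ remains continuous for the refined topology; continuity can threaten to fail at a pair $(s,e(x))$ when the meets $x_n\wedge x$ form a fresh infinite chain, so that $e(x_n\wedge x)$ does not stabilize. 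The reason the argument survives is that all such secondary limits already lie in the full closure $S$, so the non-stabilizing products converge to the correct point of $S$. Keeping the whole closure, rather than adjoining a single limit, is exactly what makes the multiplication continuous, and this is the step I expect to require the most care.

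For the reverse implication I would show that a chain-finite semilattice $X$ is closed in every $T_1$ topological semigroup $Y$ in which it is a discrete subsemigroup; since $\C\subseteq\mathsf{T_{\!1}S}$ this yields $\C$-closedness. By the cited consequence of Lemma 2.6 of \cite{BBc}, $X$ is down-complete and has a least element $\bot$, which is a zero of $X$. Assuming for contradiction that some $z\in\overline X\setminus X$, I would use that each translation $\rho_a\colon y\mapsto ya$ and $\lambda_a\colon y\mapsto ay$ is continuous and that singletons are closed: first $z\bot=\bot z=\bot$, and then I analyze the down-complete subsemilattice $A(z)=\{a\in X:za=a\}$, showing that any net in $X$ converging to $z$ is eventually above each element of $A(z)$ and pinning $z$ between infima and suprema of definable subsets of $X$. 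Down-completeness guarantees the relevant infima exist in $X$, and chain-finiteness forces the approximation to stabilize, so that $z$ is forced to coincide with an element of $X$, a contradiction. The principal obstacle here is that $Y$ is only $T_1$, not Hausdorff: the usual fact that the closure of a subsemilattice is again a commutative idempotent subsemigroup is unavailable, so the argument must be run using only continuity of the one-variable translations and closedness of points, and the possible presence of several maximal elements in $X$ means one cannot simply take a global supremum but must localize to the appropriate principal filter.
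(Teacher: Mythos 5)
Your forward direction has a genuine gap: the Cantor-cube construction can fail to produce any limit point outside the image. Take the semilattice $X=\{b\}\cup\{x_n:n\in\w\}$ with $x_0>x_1>\cdots>b$. This is an infinite chain, so it must fail to be $\C$-closed, yet every infinite subchain is decreasing and $\bigcap_n{\downarrow}x_n=\{b\}={\downarrow}b$, so $e(x_n)\to e(b)$ in the cube and $\overline{e[X]}=e[X]$. Your parenthetical claim that one can ``arrange after passing to a subchain'' that the limiting down-set is non-principal is false here; there is nothing to adjoin, and the Michael-type retopologization of a set equal to its own closure yields a closed discrete copy of $X$ --- no witness of non-closedness at all. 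The paper avoids this by working in $\Fil(X)$: it takes a \emph{free} ultrafilter $\U$ containing the infinite chain $C$, notes $UU=U$ for every $U\subseteq C$ so that $\U\U=\U$, and observes that $Y=X\cup\{x\U:x\in X^1\}\subseteq\beta(X)$ is a zero-dimensional Hausdorff topological semigroup containing $X$ as a proper dense discrete subsemigroup (Lemma~\ref{l:chain}). The point is that a free ultrafilter is automatically a new point, whereas your pointwise limit of down-set indicators need not be. Even in the cases where your limit $\chi_D$ is new, the continuity of $\wedge$ after isolating the points of $e[X]$ is exactly where the construction can break (a product $\chi_D\wedge e(x)$ landing in a now-open singleton must be \emph{eventually constant}, not merely convergent), and your appeal to ``secondary limits lying in $S$'' does not address that case.

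The reverse direction is pointed in the right direction but is only a sketch. The paper's argument (Lemma~\ref{l:cool}, applied with $E=X$ and $h$ the identity, whose fibers are singletons and hence closed in any $T_1$ ambient semigroup) does not ``pin $z$ between infima and suprema''; instead it picks a maximal element $e$ of the set $E_y$ of idempotents $f$ with $h[X\cap U]\subseteq{\uparrow}f$ for some neighborhood $U$ of the phantom limit $y$, and then runs an inductive construction producing points $v_0,v_1,\dots$ near $y$ with $e<h(v_0\cdots v_{n+1})<h(v_0\cdots v_n)$, contradicting chain-finiteness. That strictly decreasing chain is the engine of the proof, and it uses joint continuity of the multiplication (e.g.\ choosing $V$ with $v_0\cdots v_{n-1}VV$ disjoint from a closed fiber), not just continuity of translations. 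Your outline names the correct ingredients (down-completeness, least element, maximality, stabilization) but omits this construction, which is where all the work lies.
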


A semigroup $X$ is defined to be
\begin{itemize}
\item {\em periodic} if for every $x\in X$ there exists $n\in\IN$ such that the power $x^n$ is an idempotent;
\item {\em bounded} if there exists
$n\in\IN$ such that for every $x\in X$ the power $x^n$ is an
idempotent.
\end{itemize}
It is clear that each bounded semigroup is periodic (but not vice versa).

The following characterization of $\C$-closed commutative groups was obtained by the first author in \cite{Ban}.

\begin{theorem}\label{B}  A commutative group $X$ is $\C$-closed if and only if $X$ is bounded.
\end{theorem}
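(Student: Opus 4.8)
The plan is to prove the two implications separately, in each case reducing to the extreme class permitted by $\Zero\subseteq\C\subseteq\mathsf{T_{\!1}S}$.

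For the implication ``bounded $\Ra$ $\C$-closed'' it suffices, since $\C\subseteq\mathsf{T_{\!1}S}$, to show that a commutative group $X$ of finite exponent $n$ (so that $x^n=e$ for the identity $e$ and every $x\in X$) is closed in every $\mathsf{T_{\!1}S}$-semigroup $Y$ containing $X$ as a discrete subsemigroup. Suppose not and fix $y\in\overline X\setminus X$. As $Y$ is $T_1$ and $X$ is discrete, $y$ is an accumulation point of $X$, so every neighbourhood of $y$ meets $X$ in infinitely many points. The map $z\mapsto z^{n-1}$ is a composition of the continuous multiplication, hence continuous, so $y^{n-1}\in\overline{\{x^{n-1}:x\in X\}}=\overline X$ is an accumulation point of $X$ as well. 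Now $y\cdot y^{n-1}=y^n=e$, and discreteness gives an open $V\ni e$ in $Y$ with $V\cap X=\{e\}$; by continuity of multiplication pick open $W\ni y$ and $W'\ni y^{n-1}$ with $W\cdot W'\subseteq V$. Choosing two distinct $a,b\in W\cap X$ and some $c\in W'\cap X$, we get $ac,bc\in(W\cdot W')\cap X\subseteq V\cap X=\{e\}$, so $c=a^{-1}=b^{-1}$, forcing $a=b$, a contradiction. Thus $\overline X=X$. This argument uses only the axiom $T_1$, so it is as strong as possible.

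For the converse I would prove the contrapositive: an unbounded $X$ is not $\C$-closed. Since $\Zero\subseteq\C$, it is enough to build one zero-dimensional Hausdorff topological semigroup $Y\supseteq X$ in which $X$ is discrete but not closed. My general device is to adjoin a single absorbing point: set $Y=X\cup\{\infty\}$ with $\infty z=z\infty=\infty$ (automatically associative and commutative), isolate every point of $X$, and take as basic neighbourhoods of $\infty$ the sets $\{\infty\}\cup(X\setminus B)$ for $B$ in a family $\mathcal I$ of subsets of $X$. A routine check shows that $Y$ is then a zero-dimensional Hausdorff topological semigroup with $X$ discrete and $\infty\in\overline X\setminus X$, \emph{provided} $\mathcal I$ is a proper ideal of subsets of $X$ that (i) contains all finite sets, (ii) is translation-invariant (giving continuity at pairs $(x_0,\infty)$), and (iii) satisfies the divisibility condition
\[
\forall B\in\mathcal I\ \exists B'\in\mathcal I:\quad ab\in B\ \Ra\ a\in B'\ \text{or}\ b\in B'
\]
(giving continuity at $(\infty,\infty)$). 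This reduces the converse to producing such an ideal. When $X$ admits a nonzero homomorphism $h:X\to\IZ$ (written additively) one may take $\mathcal I=\{B\subseteq X:\sup h(B)<+\infty\}$: conditions (i)--(iii) all hold, the point of (iii) being that for $B\subseteq h^{-1}((-\infty,M])$ the set $B'=h^{-1}((-\infty,M/2])$ works, since $ab\in B$ forces $h(a)+h(b)\le M$ and hence $\min\{h(a),h(b)\}\le M/2$. Intuitively, the order on $\IZ$ guarantees ``large $\cdot$ large $=$ large'', so products of large elements stay near $\infty$.

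The main obstacle is the case with no nonzero homomorphism to $\IZ$, in particular the \emph{torsion} case, where this ideal is unavailable. Here condition (iii) already fails for the cofinite filter: in a group one cannot force $(X\setminus F')(X\setminus F')$ to avoid a prescribed finite set $F$, because for each $a$ the pair $(a,a^{-1}f)$ lands in $F$ (``large $\cdot$ large'' can be small, as in $a\cdot a^{-1}=e$). This signals that a single absorbing limit cannot work and must be replaced by a whole compact zero-dimensional group or semigroup of limit points. The route I would take is structural: split $X$ into primary components and use the Pr\"ufer--Kulikov theory to locate in the unbounded part a standard unbounded \emph{direct summand} $H$ --- a quasicyclic group $\IZ/p^\infty$ (always a summand, being divisible) or a countable direct sum of cyclic groups with orders tending to infinity. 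For $X=H\oplus C$ one then has the product reduction $Y_X=C\times Y_H$ with $C$ discrete: here $X=C\times H$ is discrete and non-closed in the zero-dimensional Hausdorff semigroup $C\times Y_H$ as soon as $H$ is discrete and non-closed in $Y_H$, localizing the problem to the standard groups. Building the witness $Y_H$ for these --- an explicit zero-dimensional Hausdorff semigroup in which $H$ sits as a discrete, dense subsemigroup --- is the genuinely delicate step, and is where essentially all the difficulty of the theorem concentrates.
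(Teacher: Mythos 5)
First, a caveat about the comparison: the paper does not prove Theorem~\ref{B} at all --- it quotes it as a known result from \cite{Ban} --- so your proposal can only be judged on its own correctness. Your ``bounded $\Ra$ $\C$-closed'' half is correct and complete, and pleasantly elementary: the step $y^n=e$, which you assert without comment, does follow because $z\mapsto z^n$ is continuous and $y^n\in\overline{\{x^n:x\in X\}}=\overline{\{e\}}=\{e\}$ by the $T_1$ axiom; after that, the counting argument (every neighbourhood of $y$ meets the discrete $X$ in infinitely many points, $W'$ meets $X$ at least once, and cancellation in the group forces $a=b$) is sound. This correctly proves the stronger statement that every bounded commutative group is $\mathsf{T_{\!1}S}$-closed.

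The converse, however, contains a genuine gap, and it sits exactly where the content of the theorem lies. Your one-point absorbing extension is verified only for groups admitting a nonzero homomorphism to $\IZ$, and --- as your own observation about the ideal of finite sets shows --- no such one-point extension can handle torsion groups. For the remaining groups you offer only a reduction, and then explicitly defer ``the genuinely delicate step'': the actual construction of a zero-dimensional Hausdorff topological semigroup $Y_H$ containing a quasicyclic group, or a sum $\bigoplus_i\IZ/p^{n_i}\IZ$ with $n_i\to\infty$, as a discrete non-closed subsemigroup. Since every unbounded torsion group falls into this case, what remains unproved is precisely the theorem's hard half. The reduction itself is also leaky: $\IQ$ admits no nonzero homomorphism to $\IZ$, is not torsion, and, being indecomposable, has no direct summand of either of your two standard forms, so it escapes both of your cases (this particular hole is fixable --- replace $\IZ$ by $\IR$ and use divisibility of $\IR$ to extend a character of an infinite cyclic subgroup, which settles all non-torsion groups --- but you would need to say so); moreover, the claim that an unbounded \emph{reduced} $p$-group has an unbounded direct summand that is a direct sum of cyclic groups is a nontrivial piece of basic-subgroup theory (Kulikov/Szele), not a routine quotation. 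For a sense of what the missing construction actually requires, compare the proof of Lemma~\ref{l:subgrp2} in this paper, which faces the same difficulty in a more general setting: the group is embedded via its characters into a power of $\IT$, a limit point of infinite order is produced by a compactness argument, and zero-dimensionality is arranged through the Michael modification of the topology. Machinery of that kind, or the filter-semigroup technique of Section~\ref{s:ultra}, is what your sketch stops short of supplying.
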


In this paper we characterize $\C$-closed commutative semigroups.

Our principal tool for establishing the $\C$-closedness  is the following theorem.

\begin{theorem}\label{cool} A semigroup $X$ is $\C$-closed if $X$ admits a homomorphism $h:X\to E$ to a chain-finite semilattice $E$ such that for every $e\in E$ the semigroup $h^{-1}(e)$ is $\C$-closed.
\end{theorem}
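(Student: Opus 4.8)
The plan is to prove the contrapositive packaged as a direct argument: suppose $X$ sits as a discrete subsemigroup in some $Y \in \C$, and show that $X$ is already closed in $Y$. Take a point $y \in \overline{X} \setminus X$ (if none exists we are done) and derive a contradiction. The idea is to transport the structure of $Y$ down along $h$ to the chain-finite semilattice $E$, use Theorem~\ref{BB} (which tells us $E$ is itself $\C$-closed, being chain-finite) to locate where $y$ lands, and then use the hypothesis that each fiber $h^{-1}(e)$ is $\C$-closed to finish.

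First I would extend the homomorphism $h \colon X \to E$ to a partial map on the closure. The natural thing is to embed $E$ with the discrete topology into $Y$ as well — but $E$ need not be a subsemigroup of $Y$, so instead I would work with the graph. Consider the semigroup $Z := Y \times E$ (with $E$ discrete), and inside it the diagonal-type subsemigroup $\Gamma := \{(x, h(x)) : x \in X\}$, which is isomorphic to $X$ and discrete in $Z$. Since $Z \in \C$ (the class $\C$ is closed under the relevant products because it contains $\Zero$ and sits inside $\mathsf{T_1S}$; one checks $Y \times E$ is $T_1$ and the projection arguments go through), closedness of $\Gamma$ in $Z$ would give closedness of $X$ in $Y$. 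The point of passing to $\Gamma$ is that the second coordinate projection $\pr_E \colon \overline{\Gamma} \to E$ is continuous into a discrete space, hence \emph{locally constant}; this is what lets me pin the limit point $y$ to a single fiber.

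The key steps then are: (1) given $y \in \overline{X} \setminus X$, lift it to $(y, ?) \in \overline{\Gamma}$ and observe that the $E$-coordinate is forced to be a well-defined idempotent $e_0 \in E$, because $\pr_E$ extends to a continuous homomorphism on $\overline{\Gamma}$ landing in the discrete semilattice $E$, and $E$ being chain-finite is $\C$-closed by Theorem~\ref{BB}; (2) show that multiplying $y$ by elements of a fiber pushes it into the closure of a single fiber $h^{-1}(e_0)$. Concretely, I expect to find that $\overline{h^{-1}(e_0)} \ni y$ inside $Y$, which contradicts the assumed $\C$-closedness of $h^{-1}(e_0)$. Making step (2) precise is where the semilattice order enters: for an idempotent $e_0$, the preimage $h^{-1}(e_0)$ is a subsemigroup, and I would use the relation $e_0 = e_0 e_0$ together with continuity of multiplication to show that a net in $X$ converging to $y$ can be replaced, after multiplying by a suitable fixed element of $h^{-1}(e_0)$, by a net lying eventually in $h^{-1}(e_0)$ and still converging to a point of $Y$ from which the contradiction is extracted.

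The main obstacle I anticipate is precisely step (2): controlling how the semilattice projection of a limit point interacts with multiplication, and ensuring the ``corrected'' net really does witness $y$ (or a closely related point $y' = y\cdot a$ for fixed $a \in h^{-1}(e_0)$) as a limit point of a single $\C$-closed fiber. The subtlety is that $y$ itself might not be hit by any fiber directly, so one likely produces a nonclosedness witness not for $h^{-1}(e_0)$ at $y$ but for the fiber at a translate; the chain-finiteness of $E$ (guaranteeing least elements and down-completeness, as recalled before Theorem~\ref{BB}) should let me choose the translating element so as to land in the minimal relevant fiber, where the argument closes cleanly. I would also need to verify the routine but essential closure property of $\C$ under the product $Y \times E$ with $E$ discrete, which should follow from $\Zero \subseteq \C \subseteq \mathsf{T_1S}$ together with the fact that a finite or discrete factor preserves the relevant separation axioms.
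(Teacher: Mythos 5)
There is a genuine gap, and it sits exactly where you anticipated trouble. The reduction to the graph $\Gamma=\{(x,h(x)):x\in X\}\subseteq Y\times E$ does not do any work: with $E$ discrete, a point $(y,e)$ lies in $\overline{\Gamma}$ if and only if $y\in\overline{h^{-1}(e)}$, so $\overline{\Gamma}=\bigcup_{e\in E}\overline{h^{-1}(e)}\times\{e\}$. Under the hypothesis that each fiber $h^{-1}(e)$ is closed in $Y$, the set $\Gamma$ is therefore \emph{automatically} closed in $Y\times E$ --- but this tells you nothing about $X$ in $Y$, because the projection $Y\times E\to Y$ along the infinite discrete (hence non-compact) factor $E$ is not a closed map. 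Your assertion that ``closedness of $\Gamma$ in $Z$ would give closedness of $X$ in $Y$'' is exactly the false step. Relatedly, step (1) fails: a point $y\in\overline{X}\setminus X$ need not lie in the closure of any single fiber --- it can be a limit of points drawn from infinitely many distinct fibers --- so there may be no point of $\overline{\Gamma}$ above $y$ at all, and no ``well-defined idempotent $e_0$'' is forced. (The $\C$-closedness of $E$ via Theorem~\ref{BB} cannot rescue this, since $h$ has no continuous extension to $\overline{X}$ in general.) Step (2), translating $y$ by a fixed $a\in h^{-1}(e_0)$, also does not pin things down: the products $ax$ for $x$ near $y$ land in the fibers $h^{-1}(e_0h(x))$, which still vary with $x$ and can range over infinitely many idempotents below $e_0$.

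What is missing is the actual mechanism by which chain-finiteness is used. The paper's proof (Lemma~\ref{l:cool}) takes $y\in\overline{X}\setminus X$, forms the set $E_y=\{e\in E:\exists U\ni y\;\; h[X\cap U]\subseteq{\uparrow}e\}$ (nonempty because a chain-finite semilattice has a least element), picks a \emph{maximal} $e\in E_y$, and then runs an inductive construction producing points $v_0,v_1,\dots$ near $y$ whose partial products satisfy $e<h(v_0\cdots v_{n+1})<h(v_0\cdots v_n)$; the two preparatory claims ($ay^n\in h^{-1}(e)\Rightarrow ay\in h^{-1}(e)$, and openness of the fibers propagating to neighborhoods of $y$) are what make the induction go. The resulting strictly decreasing sequence in $E$ contradicts chain-finiteness. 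This descending-chain construction is the heart of the theorem and has no counterpart in your proposal; without it, the plan does not close.
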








Theorem~\ref{cool} will be applied in the proof of the following theorem, which is one of the two main results of this paper.

\begin{theorem}\label{t:main}  A commutative semigroup $X$ is $\C$-closed if and only if $X$ is periodic, chain-finite, all subgroups of $X$ are bounded, and for every infinite subset $A\subseteq X$ the set $AA=\{xy:x,y\in A\}$ is not a singleton.
\end{theorem}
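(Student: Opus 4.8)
The plan is to prove the two implications of Theorem~\ref{t:main} separately, invoking Theorem~\ref{cool} for sufficiency and Theorems~\ref{BB} and~\ref{B} for necessity. For the sufficiency, assume the commutative semigroup $X$ satisfies the four listed conditions. Since $X$ is periodic, every $x\in X$ generates a finite cyclic subsemigroup containing a unique idempotent, which I denote $h(x)$. The first step is to check that $h:X\to E(X)$, $x\mapsto h(x)$, is a homomorphism onto the semilattice $E(X)=\{x\in X:xx=x\}$: for commuting $x,y$ one has $(xy)^n=x^ny^n$, and taking $n$ divisible by the periods of both $x$ and $y$ gives $(xy)^n=h(x)h(y)$, an idempotent, so $h(xy)=h(x)h(y)$. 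By the equivalence recalled before Theorem~\ref{BB}, chain-finiteness of $X$ (condition (2)) is equivalent to chain-finiteness of $E(X)$, so $E(X)$ is a chain-finite semilattice. Theorem~\ref{cool} then reduces everything to showing that each fibre $S_e:=h^{-1}(e)$ is $\C$-closed.

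Each $S_e$ is a commutative subsemigroup with a single idempotent $e$. The set $H_e:=eS_e$ is a subsemigroup on which $e$ is an identity; being periodic with a unique idempotent, $H_e$ is a group, and it is bounded by condition (3), hence $\C$-closed by Theorem~\ref{B}. The remaining, and principal, task is to prove that $S_e$ itself is $\C$-closed. I would argue by contradiction: embed $S_e$ as a discrete subsemigroup of some $Y\in\C$ and suppose there is a limit point $z\in\overline{S_e}\setminus S_e$. Since $H_e$ is $\C$-closed it is already closed in $Y$, and continuity of the map $s\mapsto es$ forces $ez=\lim_{s\to z}(es)\in\overline{H_e}=H_e$. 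From a net in $S_e$ converging to $z$ one then extracts an infinite set $A\subseteq S_e$ whose pairwise products $AA$ collapse to a single element, contradicting condition (4); in the degenerate situations one instead produces an unbounded subgroup, contradicting condition (3). I expect this extraction to be the main obstacle, because $Y$ is only assumed $T_1$, so limits are far from unique and no Hausdorff separation is available; pinning the double-indexed net $\{aa'\}$ down to a single value is the delicate point.

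For the necessity, assume $X$ is $\C$-closed. The map $h$ restricts to the identity on $E(X)$, so $E(X)$ is a retract of $X$ and hence $\C$-closed; Theorem~\ref{BB} then shows $E(X)$, and therefore $X$, is chain-finite, proving (2). For (3), each maximal subgroup $G$ of $X$ is $\C$-closed: were it not, an over-semigroup witnessing this would extend to one witnessing non-closedness of $X$, so Theorem~\ref{B} forces $G$ to be bounded. For periodicity (1) and for condition (4) I would argue contrapositively, constructing explicit zero-dimensional Hausdorff over-semigroups in which $X$ fails to be closed. If $X$ is not periodic it contains a copy of $(\IN,+)$, which I make accumulate at an adjoined limit point; and if some infinite $A\subseteq X$ satisfies $AA=\{c\}$, I adjoin a point $\infty$ whose neighbourhood filter is generated by the cofinite subsets of $A$ and declare $\infty\cdot\infty=c$.

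The singleton hypothesis $AA=\{c\}$ makes all products of pairs from $A$ equal to the single value $c$, which is precisely what the continuity of multiplication at $(\infty,\infty)$ requires. The second point I expect to need care is verifying that the remaining products $\infty\cdot x$ extend continuously as well; this should be handled by first passing to the subsemigroup generated by $A$ (together with its associated ideal) and controlling the traces of the sets $\{ax:a\in A\}$, after which the inclusion $X\hookrightarrow Y$ becomes a topological embedding into a semigroup $Y\in\Zero\subseteq\C$ in which $X$ is not closed. Combining the two directions yields the stated characterization.
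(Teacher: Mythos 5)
Your sufficiency reduction coincides with the paper's: define $\pi:X\to E(X)$ by sending $x$ to the unique idempotent of $x^\IN$, observe that $E(X)$ is a chain-finite semilattice, and apply Theorem~\ref{cool} to the fibres $\korin{\infty}{\,e}=\pi^{-1}(e)$. But the step you defer --- that each fibre (a periodic commutative semigroup with unique idempotent $e$, bounded $H_e$, and no infinite $A$ with $AA$ a singleton) is $\C$-closed --- is not a technicality to be waved at: it is precisely Lemma~\ref{l:single} of the paper, and it is the heart of the sufficiency direction. Its proof requires first a bounded version (Lemma~\ref{l:bounded}), then showing that the fibre is an ideal in its closure, that $y^\IN$ misses $X$ for every limit point $y$, that the subsemigroup $P$ of $p$-th powers satisfies $Pe=\{e\}$ and $x(y^p)^m=e$ for large $m$, and finally an inductive construction of points $x_k$ and neighborhoods $U_k$ producing an infinite set $A$ with $AA=\{e\}$. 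Your proposal explicitly labels this extraction ``the main obstacle'' and leaves it unresolved, so the sufficiency direction is not actually proved.

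The necessity direction contains two unjustified inheritance claims. First, ``$E(X)$ is a retract of $X$ and hence $\C$-closed'': there is no general principle that retracts of $\C$-closed semigroups are $\C$-closed, and none is proved (also note $\pi$ exists only after periodicity is established, so your order of deductions matters). The paper instead proves chain-finiteness directly (Lemma~\ref{l:chain}): a free ultrafilter on an infinite chain is an idempotent of $\Fil(X)$, yielding a zero-dimensional Hausdorff over-semigroup in which $X$ is not closed. Second, and more seriously, for condition (3) you assert that an over-semigroup witnessing non-closedness of a maximal subgroup ``would extend to one witnessing non-closedness of $X$.'' This is exactly the hard content of the theorem: subsemigroups of $\C$-closed semigroups do not inherit $\C$-closedness in general (the paper's example $\IZ\rtimes\{-1,1\}$), amalgamating a semigroup with a topological over-semigroup of one of its subgroups is not available as a tool, and invoking Corollary~\ref{c:main2} would be circular since it is a consequence of Theorem~\ref{t:main}. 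In the paper this step is Lemma~\ref{l:subgrp2}, by far the most technical argument of the whole article: it uses Baer's extension theorem for homomorphisms into $\IQ_\infty$, an embedding of $H_e$ into the compact group $\IT^\Phi$, and the Michael modification of a zero-dimensional group topology, in order to manufacture an infinite set $A\subseteq xH_e$ with $AA$ a singleton, to which Lemma~\ref{l:AA} then applies. Your sketches for periodicity and condition (4) are the right ideas (they are the paper's Lemmas~\ref{l:period} and \ref{l:AA}), but there too the real work --- defining all products of the adjoined point with elements of $X$ and verifying that the enlarged semigroup is a zero-dimensional Hausdorff topological semigroup --- is carried out in the paper by the filter-semigroup $\Fil(X)$ machinery, which your proposal does not supply.
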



\begin{corollary}\label{c:main2}
Each subsemigroup of a $\C$-closed commutative semigroup is $\C$-closed.
\end{corollary}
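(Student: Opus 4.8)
The plan is to avoid arguing directly from the definition of $\C$-closedness. That definition is external --- it asks for topological closedness in every ambient semigroup from $\C$ --- and for this reason it is not visibly inherited by subsemigroups: if $S$ is a subsemigroup of a $\C$-closed semigroup $X$, there is no obvious way to relate an isomorphic embedding of $S$ into some $Y\in\C$ to an embedding of $X$, so closedness of $X$ gives no immediate information about closedness of $S$. Instead I would exploit Theorem~\ref{t:main}, which for commutative semigroups replaces $\C$-closedness by a purely algebraic characterization, and then observe that each of the four conditions in that characterization is hereditary.

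Concretely, let $X$ be a $\C$-closed commutative semigroup and let $S\subseteq X$ be a subsemigroup. Being a subsemigroup of a commutative semigroup, $S$ is itself commutative, so by Theorem~\ref{t:main} it suffices to verify that $S$ is periodic and chain-finite, that all subgroups of $S$ are bounded, and that $AA$ is not a singleton for every infinite $A\subseteq S$; I would check these one at a time, using that $X$ enjoys them by Theorem~\ref{t:main}. Periodicity passes down because for $x\in S$ the power $x^n$ witnessing periodicity in $X$ is an idempotent already lying in $S$. Chain-finiteness passes down because a chain in $S$ is, by the absolute definition of a chain, also a chain in $X$, so $S$ cannot contain an infinite chain. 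The condition on infinite subsets passes down verbatim, since for infinite $A\subseteq S\subseteq X$ the product $AA$ computed in $S$ coincides with the one computed in $X$, which by hypothesis is not a singleton.

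The only point requiring a moment's care is the boundedness of subgroups. Here I would observe that any subgroup $G$ of $S$ is automatically a subgroup of $X$: it is closed under the operation and, being a group in its own right, contains its identity and its inverses, all of which live in $G\subseteq X$. Since boundedness of a subgroup is intrinsic to that subgroup --- it asks only for a single $n\in\IN$ with $g^n$ equal to the idempotent identity of $G$ for all $g\in G$ --- and every subgroup of $X$ is bounded by hypothesis, it follows that every subgroup of $S$ is bounded. With all four conditions verified, Theorem~\ref{t:main} yields that $S$ is $\C$-closed.

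I do not anticipate a genuine technical obstacle in this argument, since the essential work is already contained in Theorem~\ref{t:main}. The one conceptual hurdle is recognizing that the direct topological route is unavailable, and that the whole force of the corollary comes from the intrinsic characterization: once $\C$-closedness is known to be equivalent, for commutative semigroups, to a conjunction of manifestly hereditary algebraic properties, heredity to subsemigroups is transparent.
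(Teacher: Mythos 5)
Your proof is correct and is exactly the argument the paper intends: Corollary~\ref{c:main2} is stated as an immediate consequence of Theorem~\ref{t:main}, since each of the four characterizing conditions (periodicity, chain-finiteness, boundedness of subgroups, and the non-singleton condition on $AA$ for infinite $A$) is manifestly inherited by subsemigroups of a commutative semigroup. Your verification of heredity, including the observation that a subgroup of a subsemigroup $S\subseteq X$ is itself a subgroup of $X$, matches the paper's (implicit) reasoning precisely.
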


\begin{remark} 
By \cite[Proposition 10]{Ban}, the semidirect product $\IZ\rtimes \{-1,1\}$ (endowed with the binary operation $\langle x,i\rangle*\langle y,j\rangle=\langle x+i\cdot y,i\cdot j\rangle$) is a $\C$-closed group which is not bounded. This example shows that Theorem~\ref{B} and Corollary~\ref{c:main2} can not be generalized to non-commutative semigroups.
\end{remark}

\begin{example}\label{ex:Taimanov} Take any infinite cardinal $\kappa$ and endow it with the binary operation $*$ defined by
$$x*y=\begin{cases}
1&\mbox{if $x\ne y$ and $x,y\in\kappa\setminus\{0,1\}$};\\
0&\mbox{otherwise}.
\end{cases}$$
The semigroup $X=(\kappa,*)$ was introduced by Taimanov \cite{Taimanov}. Gutik \cite{Gutik} proved that the semigroup $X$ is $\mathsf{T_{\!1}S}$-closed but the quotient semigroup $X/I$ by the ideal $I=\{0,1\}$ is not $\mathsf{T_{\!z}S}$-closed.
\end{example}

Example~\ref{ex:Taimanov} shows that the $\C$-closedness is not preserved by taking quotient semigroups. This observation motivates introducing the definitions of ideally and projectively $\C$-closed semigroups.

Let us recall that a {\em congruence} on a semigroup $X$ is an equivalence relation $\approx$ on $X$ such that for any elements $x\approx y$ of $X$ and any $a\in X$ we have $ax\approx ay$ and $xa\approx ya$. For any congruence $\approx$ on a semigroup $X$, the quotient set $X/_\approx$ has a unique semigroup structure such that the quotient map $X\to X/_\approx$ is a semigroup homomorphism. The semigroup $X/_\approx$ is called the {\em quotient semigroup} $X$ by the congruence $\approx$.

A subset $I$ of a semigroup $X$ is called an {\em ideal} if $IX\cup XI\subseteq I$. Every ideal $I\subseteq X$ determines the congruence $(I\times I)\cup \{(x,y):x=y\}\subseteq X\times X$. The quotient semigroup of $X$ by this congruence is denoted by $X/I$ and called the {\em quotient semigroup} of $X$ by the ideal $I$. If $I=\emptyset$, then the quotient semigroup $X/\emptyset$ can be identified with the semigroup $X$.

A semigroup $X$ is called
\begin{itemize}
\item {\em projectively $\C$-closed} if for any congruence $\approx$ on $X$ the quotient semigroup $X/_{\approx}$ is $\C$-closed;
\item {\em ideally $\C$-closed} if for any ideal $I\subseteq X$ the quotient semigroup $X/I$ is $\C$-closed.
\end{itemize}
It is easy to see that for every semigroup the following implications hold:
$$\mbox{projectively $\C$-closed $\Ra$ ideally $\C$-closed $\Ra$ $\C$-closed.}$$

For a semigroup $X$ the union $H(X)$ of all subgroups of $X$ is called {\em the Clifford part} of $X$.

A semigroup $X$ is called
\begin{itemize}
\item {\em Clifford}  if $X=H(X)$;
\item {\em almost Clifford} if $X\setminus H(X)$ is finite.
\end{itemize}

The second main result of this paper is the following characterization.

\begin{theorem}\label{t:mainP} For a commutative semigroup $X$ the following conditions are equivalent:
\begin{enumerate}
\item $X$ is projectively $\C$-closed;
\item $X$ is ideally $\C$-closed;
\item the semigroup $X$ is chain-finite, almost Clifford, and all subgroups of $X$ are bounded.
\end{enumerate}
\end{theorem}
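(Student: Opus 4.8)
The plan is to prove the cycle $(1)\Ra(2)\Ra(3)\Ra(1)$. The implication $(1)\Ra(2)$ is immediate from the general implications recorded after the definition of (projectively/ideally) $\C$-closed. The two substantial steps are $(2)\Ra(3)$ and $(3)\Ra(1)$, and the whole argument rests on the observation that, for a \emph{Rees} quotient $X/I$ of a commutative semigroup $X$ satisfying the four conditions of Theorem~\ref{t:main}, three of those conditions are automatically inherited. Indeed $X/I$ is a homomorphic image, hence periodic; its idempotents are $0$ together with the idempotents of $X$ lying outside $I$, so $E(X/I)$ embeds into $E(X)$ with a bottom element adjoined and stays chain-finite; and since $H_e\cap I=\emptyset$ whenever $e\notin I$ (otherwise $e=gg^{-1}\in I$), every nontrivial subgroup of $X/I$ is a subgroup of $X$ and hence bounded. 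Thus I would first record the clean reduction: \emph{a $\C$-closed commutative semigroup $X$ is ideally $\C$-closed if and only if, for every ideal $I$, no infinite $A\subseteq X/I$ has $AA$ a singleton}, which isolates $(2)\Ra(3)$ into a single combinatorial task.

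For $(2)\Ra(3)$ I would argue by contraposition: assuming $X$ is $\C$-closed but $X\setminus H(X)$ is infinite, I would produce an ideal $I$ and an infinite $A\subseteq X\setminus I$, pairwise distinct modulo $I$, with $AA\subseteq I$; then $A$ maps to an infinite \emph{null} set in $X/I$, so $X/I$ violates the product condition and is not $\C$-closed. Using the canonical homomorphism $h\colon X\to E(X)$ onto the chain-finite semilattice of idempotents, whose fibres are the archimedean components, I split into two cases. If $h(X\setminus H(X))$ is infinite, the infinite Ramsey theorem inside the chain-finite poset $E(X)$ yields an infinite \emph{antichain} $\{e_i\}$ of idempotents each hit by a non-group element $x_i$; the antichain property pushes every product $x_ix_j$ into a strictly lower component, so $x_k$ never divides any $x_ix_j$ (and $x_k$ cannot divide $x_k^2$ since it is non-group), whence $A=\{x_i\}$ is disjoint from the ideal $I=\langle AA\rangle$ generated by its products. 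If $h(X\setminus H(X))$ is finite, then some component $X_e$ has infinite non-group part and, using that $X_e$ and the relevant Rees quotients of $X$ are $\C$-closed, the problem localises to the infinite commutative \emph{nil} semigroup $S=X_e/H_e$, where one must find an infinite $A\subseteq S\setminus\{0\}$ whose products generate a proper ideal missing $A$. Here divisibility is a genuine partial order on $S\setminus\{0\}$ (antisymmetry follows from nilpotency), so an infinite divisibility-antichain again suffices by the same computation; the remaining case of finite width is approached via Dilworth's theorem together with local nilpotency of commutative nil semigroups.

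The hardest point is precisely this finite-width case: extracting, from an infinite commutative nil semigroup covered by finitely many divisibility chains, an infinite subset whose pairwise products fall into a proper Rees ideal disjoint from the subset. The antichain bookkeeping gives no leverage, and one must instead exploit that every finitely generated commutative nil semigroup is finite and nilpotent, in order to argue that sufficiently ``deep'' elements along a chain multiply into a lower ideal and so yield an infinite null image. I expect this local-nilpotency extraction to be the main technical obstacle of the whole theorem.

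For $(3)\Ra(1)$ I would show that condition $(3)$ is inherited by \emph{every} quotient $Z=X/_{\approx}$ and, separately, that $(3)$ already implies $\C$-closedness; combining these, every quotient of $X$ is $\C$-closed, i.e. $X$ is projectively $\C$-closed. That $(3)$ implies $\C$-closedness is a direct check of Theorem~\ref{t:main}: chain-finiteness and boundedness of subgroups are given, periodicity holds because outside the finite set $X\setminus H(X)$ the powers of any element enter a bounded subgroup, and the product condition holds since an infinite $A$ meets $H(X)$ in an infinite set $A'$, so $AA=\{c\}$ would force all squares of $A'$ to equal $c$, collapsing $A'$ into one bounded group and then to a single point. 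For the inheritance, periodicity and ``almost Clifford'' pass to $Z$ because $q(H(X))\subseteq H(Z)$ and $q(X\setminus H(X))$ is finite. For chain-finiteness and bounded subgroups I would use down-completeness of $E(X)$: the set $P_f=q^{-1}(f)\cap E(X)$ of idempotents over a given $f\in E(Z)$ is a single $\approx$-class closed under meets, hence by the descending chain condition has a least element $e_f^{\ast}$; the assignment $f\mapsto e_f^{\ast}$ is an order-embedding $E(Z)\hookrightarrow E(X)$, so $E(Z)$ is chain-finite, and every subgroup $H_f$ of $Z$ is, up to a finite set, the image of the bounded group $H_{e_f^{\ast}}$, which forces $H_f$ to be bounded. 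With all four conditions verified for $Z$, Theorem~\ref{t:main} gives that $Z$ is $\C$-closed, closing the cycle.
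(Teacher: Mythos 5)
Your overall architecture --- $(1)\Ra(2)$ trivially, $(3)\Ra(1)$ by verifying the four conditions of Theorem~\ref{t:main} for every quotient via the least idempotent in each fibre of $q$ restricted to $E(X)$, and $(2)\Ra(3)$ by manufacturing an infinite set $A$ with $A\cap AAX^1=\emptyset$ and passing to the Rees quotient by the ideal generated by $AA$ --- coincides with the paper's, and the parts you actually carry out are sound. Your $(3)\Ra(1)$ inheritance argument is essentially the paper's (the order-embedding $f\mapsto e_f^{*}$ and the identity $q[H_{e_f^*}]=H_f$), and your case of infinitely many idempotents carrying non-group elements is the paper's antichain construction for the set $B=\{e\in E(X):\korin{\infty}{H_e}\ne H_e\}$.

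The gap is exactly where you flag it, and it is not a routine one: the case where a single archimedean component $\korin{\infty}{H_e}$ has infinite non-group part is the content of Lemma~\ref{l:powers}, and its proof is not a Dilworth/local-nilpotency extraction inside the nil semigroup $X_e/H_e$. The paper splits it into two subcases. If some bounded layer $Z_\ell=\{z:z^\ell\in H_e\}$ already meets the complement of $H_e$ in an infinite set, a Ramsey argument (Lemma~\ref{l:quotient}) produces a finite $F$ and an infinite $A$ with $AA\subseteq FX^1$ and $A\cap FX^1=\emptyset$; this is the part your antichain computation could plausibly recover. But if every $Z_\ell\setminus H_e$ is finite while their union is infinite, a K\H onig's-lemma argument on the tree of iterated square roots yields an infinite root-chain $x_{n-1}=x_n^{p_n}$ outside $H_e$, and the contradiction is then obtained \emph{not} by exhibiting a bad Rees quotient but by Lemma~\ref{l:power-periodic}: a homomorphism from $Q_+=\{k/(p_1\cdots p_n):k,n\in\IN\}$ onto the root-chain produces an infinite $A\subseteq X$ itself with $AA=\{e\}$, contradicting the $\C$-closedness of $X$ via Lemma~\ref{l:AA}. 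An infinite divisibility chain obtained from Dilworth is strictly weaker than such a root-chain ($x_{n-1}=x_ns_n$ with $s_n$ unrelated to $x_n$ gives no control over products of chain elements), and the finiteness of finitely generated commutative nil semigroups does not by itself show that deep elements of an infinite chain multiply into a proper ideal avoiding the chain; so the intended tool does not reach the statement, and a different mechanism (one that exploits $\Zero$-closedness of $X$ rather than of a quotient) is required. A secondary, fixable issue: your localisation quotients by $H_e$, which is an ideal of $X_e$ but not of $X$, so the null set must ultimately be exhibited in $X/I$ for an ideal $I=FX^1$ of $X$ itself; the paper's Claim~\ref{cl:6.3} does precisely this bookkeeping.
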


We do not know whether the equivalence $(1)\Leftrightarrow(2)$ in Theorem~\ref{t:mainP} remains true for any (semi)group, see Question~\ref{id}.

\begin{remark} Theorem~\ref{t:mainP} implies that the projective $\C$-closedness of commutative semigroups is inherited by subsemigroups and quotient semigroups.
\end{remark}

Theorems~\ref{t:main} and \ref{t:mainP} will be proved in Sections~\ref{s:main}, \ref{s:mainP} after a  preliminary work made in Sections~\ref{s:ultra}--\ref{s:Zero} and \ref{s:quotient}.

\section{The topological semigroup of filters on a semigroup}\label{s:ultra}

In this section for every semigroup $X$ we define the topological
semigroup $\Fil(X)$ of filters on $X$, containing $X$ as a dense
discrete subsemigroup. This construction is our principal tool in
the proofs of non-$\C$-closedness of  semigroups.

We recall that a {\em filter} on a set $X$ is any family $\F$ of
nonempty subsets of $X$, which is closed under finite
intersections and taking supersets in $X$.  A filter $\F$ is
\begin{itemize}
\item {\em free} if $\bigcap\F=\emptyset$;
\item {\em principal} if $\{x\}\in\F$ for some $x\in X$.
\end{itemize}

A subfamily $\mathcal
B\subseteq\F$ is called a {\em base} of a filter $\F$ if
$\F=\{A\subseteq X:\exists B\in\mathcal B\;\;(B\subseteq A)\}$.
 By $\Fil(X)$ we denote the set of all filters on $X$. The set
$\Fil(X)$ is partially ordered by the inclusion relation. Maximal
elements of the partially ordered set $\Fil(X)$ are called {\em
ultrafilters}. It is well-known that a filter $\F$ on $X$ is an {\em ultrafilter} if and only if
for any partition $X=U\cup V$ of $X$ either $U$ or $V$ belongs to
$\F$. By $\beta(X)\subseteq \Fil(X)$ we denote the set of all
ultrafilters on $X$.

 Each point $x\in X$ will be identified with the principal
ultrafilter $\U_x=\{U\subseteq X:x\in U\}\in\beta(X)\subseteq\Fil(X)$.
So, $X$ can be identified with the subset of $\beta(X)$ consisting
of all principal ultrafilters. Thus we get the chain of
inclusions $X\subseteq \beta(X)\subseteq\Fil(X)$.

The set $\Fil(X)$ carries the {\em canonical topology} generated
by
the base consisting of the sets $$\langle
U\rangle=\{\U\in\Fil(X):U\in\U\}$$ where $U\subseteq X$ runs over
subsets of $X$. It can be shown that this topology satisfies the
separation axiom $T_0$, i.e., for each distinct points $x,y\in \Fil(X)$ there exists an open set which contains precisely one of them. The set $X$ of principal ultrafilters is
dense in $\Fil(X)$ and for each $x\in X$ the singleton
$\{x\}=\{F\in\Fil(X):\{x\}\in F\}=\langle\{x\}\rangle$ is an open
set in $\Fil(X)$. So, $X$ is a dense discrete subspace of $\Fil(X)$.
The subspace $\beta(X)$ of ultrafilters is compact, Hausdorff, zero-dimensional, and
dense in $\Fil(X)$. Consequently, each subspace of $\beta(X)$ is
zero-dimensional and Tychonoff.

If $X$ is a (commutative) semigroup, then $\Fil(X)$ has a natural
structure of a (commutative) topological semigroup: for any
filters
$\U,\V\in\Fil(X)$ their product $\U\V$ is the filter generated by
the base $\{UV:U\in\U,\;V\in\V\}$, where $UV=\{uv:u\in U,\;v\in
V\}$. Every neighborhood of $\U\V$ in $\Fil(X)$ contains a basic
neighborhood $\langle UV\rangle$ for some $U\in\U$ and $V\in\V$.
Then $\langle U\rangle$ and $\langle V\rangle$ are basic
neighborhoods of the filters $\U,\V$ in $\Fil(X)$ such that
$\langle U\rangle\cdot\langle V\rangle\subseteq\langle UV\rangle$, which means that
$\Fil(X)$
is a topological semigroup, containing $X$ as a dense discrete
subsemigroup. Observe that the product of two ultrafilters is not
necessarily an ultrafilter, so $\beta(X)$ is not necessarily a
subsemigroup of $\Fil(X)$.

\section{Some properties of periodic semigroups}

In this section we establish some properties of periodic semigroups. Let us recall that a semigroup $S$ is {\em periodic} if for every $x\in S$ there exists $n\in\IN$ such that $x^n$ is an idempotent of $S$.

For a subset $A$ of a semigroup $S$, let $$\korin{\infty}{A}=\{x\in X:\exists n\in\IN\;\;(x^n\in A)\}.$$ For an element $e\in S$, the set $\korin{\infty}{\,\{e\}}$ will be denoted by $\korin{\infty}{\,e}$. Observe that a semigroup $S$ is periodic if and only if $S=\bigcup_{e\in E(S)}\korin{\infty}{\,e}$, where $E(S)=\{e\in S:ee=e\}$ is the set of idempotents of $S$.

For an element $a$ of a semigroup $S$ the set
$$H_a=\{x\in S:(xS^1=aS^1)\;\wedge\;(S^1x=S^1a)\}$$
is called the {\em $\mathcal H$-class} of $a$.
Here $S^1=S\cup\{1\}$ where $1$ is an element such that $1x=x=x1$ for all $x\in S^1$. We shall assume that $x^0=1$ for every $x\in S^1$.

By Corollary 2.2.6 \cite{Howie}, for every idempotent $e\in E(S)$ its $\mathcal H$-class $H_e$ coincides with the maximal subgroup of $S$, containing the idempotent $e$.
The union $$H(S)=\bigcup_{e\in E(S)}H_e$$is the {\em Clifford part} of $S$. 

For two subsets $A,B$ of a semigroup $S$ their product  in $S$ is defined as  $$A\cdot B=\{ab:a\in A,\;b\in B\}.$$The set $A\cdot B$ will be also denoted by $AB$.

\begin{lemma}\label{l:C-ideal} For any idempotent $e$ of a semigroup $S$ we have
$$
(\korin{\infty}{H_e}\cdot H_e)\cup (H_e\cdot \korin{\infty}{H_e})\subseteq H_e.$$
\end{lemma}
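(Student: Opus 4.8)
The plan is to fix the idempotent $e\in E(S)$ and use the cited Corollary 2.2.6 of \cite{Howie}, which identifies $H_e$ with the maximal subgroup of $S$ whose identity is $e$. Thus $ez=ze=z$ for every $z\in H_e$, and each $z\in H_e$ has an inverse $z^{-1}\in H_e$ with $zz^{-1}=z^{-1}z=e$. By the left--right symmetry of the asserted inclusion it is enough to treat $\korin{\infty}{H_e}\cdot H_e\subseteq H_e$, so I take $x\in\korin{\infty}{H_e}$, fix $n\in\IN$ with $g:=x^n\in H_e$, take an arbitrary $h\in H_e$, and aim to prove $xh\in H_e$.

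The first reduction is to note that, since $h=eh$, we have $xh=(xe)h$, and because $H_e$ is closed under multiplication it suffices to show $xe\in H_e$; the symmetric inclusion $H_e\cdot\korin{\infty}{H_e}\subseteq H_e$ will then follow from $ex\in H_e$ via $hx=h(ex)$. So the whole lemma reduces to the single claim that $xe$ and $ex$ lie in $H_e$.

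The heart of the argument is a short computation in the group $H_e$ showing that $x$ commutes with $e$. Writing $e=g^{-1}g=gg^{-1}$ and using $x^{2n}=g^2$ together with $g^{-1}g^2=g$, I would first verify the two identities $e\,x^{n+1}=x^{n+1}$ and $x^{n+1}e=x^{n+1}$; for instance $e\,x^{n+1}=g^{-1}g\,x^{n+1}=g^{-1}x^{2n+1}=g^{-1}g^2x=gx=x^{n+1}$, and symmetrically for the other. Feeding these back, since $xe=x(gg^{-1})=x^{n+1}g^{-1}$ and $ex=(g^{-1}g)x=g^{-1}x^{n+1}$, I get $exe=e(x^{n+1}g^{-1})=x^{n+1}g^{-1}=xe$ and likewise $exe=(g^{-1}x^{n+1})e=g^{-1}x^{n+1}=ex$. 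Hence $ex=xe=exe$; in particular $x_e:=xe=ex\in eSe$ is fixed by $e$ on both sides.

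It then remains to see that $x_e$ is invertible in the local monoid $eSe$, whose group of units is precisely $H_e$. Using $ex=xe$ one obtains by an immediate induction that $(xe)^k=x^ke$ for all $k$, so that $(xe)^n=x^ne=ge=g$ is a unit of $eSe$ with inverse $g^{-1}$. Then $x_e\cdot(x_e^{n-1}g^{-1})=gg^{-1}=e$ and $(g^{-1}x_e^{n-1})\cdot x_e=g^{-1}g=e$ exhibit a right and a left inverse of $x_e$ in $eSe$, so $x_e$ is a unit and therefore belongs to the maximal subgroup $H_e$. Consequently $xh=(xe)h\in H_e\cdot H_e=H_e$ and, symmetrically, $hx=h(ex)\in H_e$, which proves both inclusions. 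The main obstacle is the commutation step $ex=xe$ (equivalently, that $xe,ex\in eSe$): once this is established the remainder is routine bookkeeping inside the group $H_e$, so that is where I would concentrate the verification.
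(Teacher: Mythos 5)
Your argument is correct, but it reaches the key point $xe\in H_e$ by a different route than the paper. The paper works straight from the definition of the $\mathcal H$-class given in Section~3: writing $x^n\in H_e$, it checks the two Green's-relation identities $xeS^1=eS^1$ and $S^1xe=S^1e$ (via $xeS^1=x^{n+1}S^1\subseteq x^nS^1=eS^1$ and $eS^1=x^{2n}S^1\subseteq x^{n+1}S^1=xeS^1$, plus the mirror computation), and then concludes exactly as you do that $xy=(xe)y\in H_eH_e=H_e$ for $y\in H_e$. You instead first prove the commutation $xe=ex=exe$ by computing inside $H_e$ with $g=x^n$ and $g^{-1}$, and then exhibit one-sided inverses of $x_e=xe$ in the local monoid $eSe$, invoking the fact that the group of units of $eSe$ is $H_e$. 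All of your computations check out (including the degenerate case $n=1$, using the paper's convention $x^0=1$). The only thing you should make explicit is the identification of $H_e$ with the group of units of $eSe$: the paper defines $H_e$ by $xS^1=eS^1$ and $S^1x=S^1e$ and only cites that $H_e$ is the maximal subgroup containing $e$, so a one-line verification (a unit $a$ of $eSe$ with inverse $b$ satisfies $aS^1\supseteq abS^1=eS^1$ and $aS^1=eaS^1\subseteq eS^1$, etc.) is needed to close the loop. In exchange, your route establishes the stronger and independently useful fact that $e$ commutes with every $x\in\korin{\infty}{H_e}$, which the paper's shorter Green's-relations computation does not give.
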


\begin{proof} Fix any element $x\in \korin{\infty}{H_e}$. First we prove that $xe\in H_e$. Since $x\in\korin{\infty}{H_e}$, there exists $n\in\IN$ such that $x^n\in H_e$ and hence $x^{2n}\in H_e$. Observe that $xeS^1=xx^{n}S^1\subseteq x^nS^1=eS^1$ and $eS^1=x^{2n}S^1\subseteq x^{n+1}S^1=xeS^1$ and hence $xeS^1=eS^1$. By analogy we can prove that $S^1xe=S^1e$. Then $xe\in H_e$ by the definition of the $\mathcal H$-class $H_e$.

Now fix any element $y\in H_e$. Since $H_e$ is a group with neutral element $e$, we obtain $xy=x(ey)=(xe)y\in H_eH_e=H_e$. By analogy we can prove that $yx\in H_e$.
\end{proof}

Let $S$ be a periodic semigorup. Then for any $x\in S$ there exists $n\in\mathbb N$ and idempotent $e\in S$ such that $e=x^n$. Assume that there exist $m\in\mathbb N$ and idempotent $f\in S$ such that $x^m=f$. Then $e=e^m=(x^{n})^m=x^{nm}=(x^m)^n=f^n=f$ which implies that the monogenic subsemigroup $x^\IN=\{x^n:n\in\IN\}$ of $S$ contains a unique idempotent. This allows us to define a function $\pi:S\to E(S)$ assigning to each $x\in S$ the unique idempotent of the semigroup $x^\IN$.

If some element $x\in S$ belongs to a maximal subgroup $H_e\subseteq S$, then $\pi(x)\in x^\IN\subseteq H_e$ coincides with the unique idempotent $e$ of the group $H_e$ and hence $\pi[H_e]=\{e\}$. Therefore, $\pi[H(S)]=E(S)=\pi[S]$.

For a semigroup $S$ let
$$Z(S)=\{z\in S:\forall x\in S\;\;(xz=zx)\}$$be the {\em center} of $S$.

\begin{proposition}\label{p:pi-homo} If $S$ is a periodic semigroup with $E(S)\subseteq Z(S)$, then $\pi:S\to E(S)$ is a homomorphism and $H(S)$ is a subsemigroup of $S$.
\end{proposition}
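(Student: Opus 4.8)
The plan is to prove the two assertions separately, handling the statement about $H(S)$ first, since it feeds into the homomorphism claim. For the fact that $H(S)$ is a subsemigroup, I would show that for any idempotents $e,f\in E(S)$ the product $ef$ is again idempotent (immediate from $E(S)\subseteq Z(S)$, as $(ef)(ef)=e^2f^2=ef$) and that $H_e\cdot H_f\subseteq H_{ef}$. The inclusion I would verify directly from the ideal description of $\mathcal H$-classes: if $a\in H_e$ and $b\in H_f$, then $aS^1=eS^1$ and $bS^1=fS^1$, and using centrality of $f$ one computes $abS^1=a(fS^1)=(af)S^1$ together with $afS^1=f(aS^1)=feS^1=efS^1$, so that $abS^1=efS^1$; the corresponding left-ideal identities $S^1ab=S^1ef$ are symmetric. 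Hence $ab\in H_{ef}\subseteq H(S)$, and the Clifford part is closed under multiplication.

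For the homomorphism claim I would fix $x,y\in S$ and set $e=\pi(x)$, $f=\pi(y)$, $g=ef$ and $h=\pi(xy)$; all four are central idempotents, so it suffices to prove $h=g$ in the semilattice $E(S)$. The plan is to obtain the two inequalities separately. The inequality $g\le h$ (that is, $hg=g$) is the easy one: by Lemma~\ref{l:C-ideal} we have $xe\in H_e$ and $yf\in H_f$, and since $e,f$ are central, $(xy)g=xyef=(xe)(yf)\in H_eH_f\subseteq H_g$ by the first part. As $(xy)g$ lies in the group $H_g$, some power $((xy)g)^k=(xy)^kg$ equals $g$; choosing a large multiple $m$ of $k$ that simultaneously realizes $(xy)^m=h$ then yields $hg=g$.

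The main obstacle will be the reverse inequality $h\le g$: every \emph{symmetric} manipulation I tried only reproduces $g\le h$ again, and the real point is to break this symmetry using periodicity. Here the plan is to pass to the monoid $M=hS=Sh$, which is a subsemigroup with identity $1_M=h$ because $h$ is a central idempotent, and which is again periodic. Setting $u=xh$ and $v=yh$ in $M$, one has $uv=xyh$ with $(uv)^k=(xy)^kh=h=1_M$ for large $k$, so $uv$ is a \emph{unit} of $M$; consequently $u$ is right-invertible and $v$ is left-invertible in $M$. The crucial sub-lemma is that in a periodic monoid a right-invertible element $u$ satisfies $\pi_M(u)=1_M$ (where $\pi_M(u)$ denotes the idempotent power of $u$ computed in $M$): from $uM=M$ one gets $\pi_M(u)\,M=u^NM=M$, and an idempotent generating all of $M$ must equal $1_M$.

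Finally I would compute $\pi_M(xh)=eh$ and $\pi_M(yh)=fh$ (since $(xh)^k=x^kh\to eh$ and likewise for $y$), and apply the sub-lemma to $u$ and $v$ to conclude $eh=1_M=h$ and $fh=1_M=h$, i.e. $h\le e$ and $h\le f$, whence $h\le ef=g$. Combined with $g\le h$ from the previous step this gives $h=g$, that is $\pi(xy)=\pi(x)\pi(y)$, completing the proof. The step I expect to require the most care is the passage to $M=hS$ and the one-sided-invertibility sub-lemma, as this is the only place where periodicity is genuinely used to force $h\le g$ rather than merely $g\le h$.
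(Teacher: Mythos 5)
Your proposal is correct, and for the crucial half of the argument it takes a genuinely different route from the paper. The first part (idempotents commute, $H_eH_f\subseteq H_{ef}$ via the Green's-relations description, hence $H(S)$ is a subsemigroup) and the easy inequality $\pi(x)\pi(y)\le\pi(xy)$ coincide with the paper's Claims on $H_xH_y\subseteq H_{xy}$ and $\pi(x)\pi(y)\le\pi(xy)$, though you derive the latter by iterating $(xy)^kg=g$ rather than by first proving the auxiliary identity $\pi(xz)=\pi(x)\pi(z)$ for central $z$. For the reverse inequality the paper proceeds through a chain of special cases --- $\pi(xy)=\pi(x)\pi(y)$ for $y\in Z(S)$, then for $y\in H(S)$ using the group inverse, then $\pi(xy)=\pi(yx)$, and finally an explicit computation showing $xe,ye\in H_e$ for $e=\pi(xy)$ from $(xy)^n=e=(yx)^n$ --- whereas you isolate the underlying mechanism as a clean abstract statement: in the periodic corner monoid $M=hS$ with identity $h=\pi(xy)$, the element $xh$ is right-invertible (since $(xh)(yh)$ is a unit), and a one-sided invertible element of a periodic monoid has idempotent power equal to the identity, giving $eh=h$ and $fh=h$ directly. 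The two arguments rest on the same fact (the paper's computation $eS^1=(xy)^neS^1\subseteq xeS^1$ is exactly your right-invertibility of $xh$ in disguise), but your packaging avoids the intermediate claims $\pi(xy)=\pi(yx)$ and the Clifford-element case entirely and is arguably more transparent about where periodicity enters; the paper's version stays elementary and self-contained within Green's-relations manipulations. The only point worth making explicit in a write-up is that your sub-lemma needs both its right-handed form (for $u=xh$) and its mirror left-handed form (for $v=yh$), which follows by the symmetric argument from $Mv=M$.
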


\begin{proof} Since $E(S)\subseteq Z(S)$, the set $E(S)$ is a semilattice and hence $E(S)$ carries the partial order $\le$ defined by $x\le y$ if and only if $xy=x$.

\begin{claim}\label{cl:pi-Z} For any $x\in S$ and $y\in Z(S)$ we have $\pi(xy)=\pi(x)\pi(y)$.
\end{claim}

\begin{proof} Since $S$ is periodic, there exist numbers $n,m\in\IN$ such that $\pi(x)=x^n$ and $\pi(y)=y^m$. Since $xy=yx$, $(xy)^{nm}=x^{nm}y^{mn}=\pi(x)^m\pi(y)^n=\pi(x)\pi(y)\in E(S)$ and hence $\pi(xy)=\pi(x)\pi(y)$.
\end{proof}

The following claim implies that $H(S)$ is a subsemigroup of $S$.

\begin{claim}\label{cl:Hprod} For any $x,y\in E(S)$ we have $H_xH_y\subseteq H_{xy}$.
\end{claim}

\begin{proof} Take any elements $a\in H_x$ and $b\in H_y$ and observe that since $x,y\in E(S)\subseteq Z(S)$ we have  $abS^1=ayS^1=yaS^1=yxS^1=xyS^1$ and $S^1ab=S^1xb=S^1bx=S^1yx=S^1xy$ and hence $ab\in H_{xy}$.
\end{proof}

\begin{claim}\label{cl:ineq} For any $x,y\in S$ we have $\pi(x)\pi(y)\le\pi(xy)$.
\end{claim}

\begin{proof} By Lemma~\ref{l:C-ideal}, $x\pi(x)\in \korin{\infty}{\pi(x)}\pi(x)\in H_{\pi(x)}$ and $y\pi(y)\in H_{\pi(y)}$. Then $xy\pi(x)\pi(y)=x\pi(x)y\pi(y)\in H_{\pi(x)}H_{\pi(y)}\subseteq H_{\pi(x)\pi(y)}$ according to Claim~\ref{cl:Hprod}. Hence $\pi(x\pi(x)y\pi(y))=\pi(x)\pi(y)$. By Claim~\ref{cl:pi-Z},
$$\pi(x)\pi(y)=\pi(x\pi(x)y\pi(y))=\pi(xy\pi(x)\pi(y))=\pi(xy)\pi(x)\pi(y),$$
which means that $\pi(x)\pi(y)\le\pi(xy)$.
\end{proof}

\begin{claim}\label{cl:SxH} For any $x\in S$ and $y\in H(S)$ we have $\pi(xy)=\pi(x)\pi(y)$.
\end{claim}

\begin{proof} It follows from $y\in H(S)$ that $y\in H_{\pi(y)}$ and hence $y=y\pi(y)$. Let $y^{-1}$ be the inverse element of $y$ in the group $H_{\pi(y)}$.
By Claims~\ref{cl:ineq} and \ref{cl:pi-Z},
$$\pi(x)\pi(y)\le \pi(xy)=\pi(xy\pi(y))=\pi(xy)\pi(y)=\pi(xy)\pi(y^{-1})\le \pi(xyy^{-1})=\pi(x\pi(y))=\pi(x)\pi(y)$$ and hence $\pi(xy)=\pi(x)\pi(y)$.
\end{proof}

\begin{claim}\label{cl:xy=yx} For every $x,y\in S$ we have $\pi(xy)=\pi(yx)$.
\end{claim}

\begin{proof} Since $S$ is periodic, there exists $n\in\IN$ such that $(xy)^n$ and $(yx)^n$ are idempotents. Taking into account that $E(S)\subseteq Z(S)$, we conclude that
\begin{multline*}
(xy)^n=((xy)^n)^{n+1}=((xy)^{n+1})^n=(x(yx)^ny)^n=((yx)^n)^n(xy)^n=(yx)^n(xy)^n=\\
(yx)^n((xy)^n)^n=(y(xy)^nx)^n=((yx)^{n+1})^n=((yx)^n)^{n+1}=(yx)^n
\end{multline*}
and hence $\pi(xy)=(xy)^n=(yx)^n=\pi(yx)$.
\end{proof}

\begin{claim} For every $x,y\in S$ we have $\pi(xy)=\pi(x)\pi(y)$.
\end{claim}

\begin{proof} By Claim~\ref{cl:xy=yx}, $\pi(xy)=\pi(yx)$. Let $e=\pi(xy)=\pi(yx)$.
By Claim~\ref{cl:ineq}, $\pi(x)\pi(y)\le\pi(xy)=e$. Since $S$ is periodic, there exists $n\in\IN$ such that $(xy)^n=e=(yx)^n$.
Observe that $xeS^1=exS^1\subseteq eS^1$ and $eS^1=eeS^1=(xy)^neS^1\subseteq xeS^1$ and hence $xeS^1=eS^1$. Similarly, $S^1xe\subseteq S^1e$ and $S^1e=S^1ee=S^1e(yx)^n\subseteq S^1ex=S^1xe$ and hence $xe\in H_e$. By analogy we can prove that $ye\in H_e$. By Claim~\ref{cl:Hprod} and the inequality $\pi(x)\pi(y)\le e$, we finally have
$$\pi(x)\pi(y)=\pi(x)\pi(y)e=\pi(xe)\pi(ye)=\pi(xeye)=e=\pi(xy).$$
\end{proof}
\end{proof}

\section{Sufficient conditions of $\C$-closedness}\label{s:cool}

In this section we prove some sufficient conditions of the $\C$-closedness of a semigroup. We start with the following lemma that implies Theorem~\ref{cool}, announced in the introduction.

\begin{lemma}\label{l:cool} A subsemigroup $X$ of a topological semigroup $Y$ is closed in $Y$ if $X$ admits a continuous homomorphism $h:X\to E$ to a chain-finite discrete topological semilattice $E$ such that for every $e\in E$ the set $h^{-1}(e)$ is closed in $Y$.
\end{lemma}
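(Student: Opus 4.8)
The plan is to show that the complement $Y \setminus X$ is open by analyzing an arbitrary point $y$ in the closure of $X$ and proving that $y \in X$. So suppose $y \in \overline{X}$; I want to locate $y$ inside one of the fibers $h^{-1}(e)$, which are closed in $Y$ by hypothesis, and conclude $y \in h^{-1}(e) \subseteq X$. The natural device is to track the ``semilattice coordinate'' of approximating nets: since $X = \bigsqcup_{e \in E} h^{-1}(e)$, any net in $X$ converging to $y$ can be refined (via an ultrafilter on $X$, using the $\Fil(X)$ or $\beta(X)$ machinery of Section~\ref{s:ultra}, or directly via subnets) so that its $h$-image lies in a single value or at least stabilizes in a controlled way. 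The key structural input is that $E$ is \emph{chain-finite}, hence down-complete with a least element on every nonempty subset, as recalled before Theorem~\ref{BB}.

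First I would fix a point $y \in \overline{X} \setminus X$ aiming for a contradiction, and consider the trace on $X$ of the neighborhood filter of $y$ in $Y$; call the resulting filter $\F$ on $X$. Because $E$ is discrete and chain-finite, I would analyze the pushed-forward family $\{h[F] : F \in \F\}$ and use the down-completeness of $E$ to extract a distinguished idempotent, for instance $e_0 = \inf h[X']$ for a suitable $X' \in \F$, or more robustly the idempotent associated to an ultrafilter $\U \supseteq \F$ whose image $h[\U]$ converges in the finite/discrete semilattice $E$. The point of passing to an ultrafilter is that $h[\U]$, being an ultrafilter on a set whose relevant structure is a chain-finite semilattice, will concentrate; I expect to argue that there is a single $e \in E$ with $h^{-1}(e) \in \U$, i.e. the ultrafilter lives on one fiber. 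Once I know $\U$ concentrates on $h^{-1}(e)$, and $y$ is in the closure of every member of $\U$ (this is how $\U$ was chosen, refining the neighborhood trace $\F$), I conclude $y \in \overline{h^{-1}(e)} = h^{-1}(e)$ since the fiber is closed in $Y$; but $h^{-1}(e) \subseteq X$, contradicting $y \notin X$.

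The main obstacle is the concentration step: justifying that the ultrafilter (or a sufficiently refined net) actually lands in a \emph{single} fiber rather than spreading its mass across infinitely many incomparable idempotents. For a \emph{single} $h^{-1}(e)$ to capture $y$ I cannot merely take an infimum set-theoretically; I must use multiplication to force the limit into one fiber. Concretely, I expect to exploit continuity of the multiplication in $Y$ together with the homomorphism property $h(ab) = h(a)h(b)$: for two nets (or two ultrafilters) both clustering at $y$, the product clusters at $y^2$ while its $h$-image clusters at a product of idempotents, and comparing the least idempotent reachable this way with chain-finiteness pins down a unique $e$ with $y \in h^{-1}(e)$. The delicate case is when $h[\F]$ is not eventually constant; here chain-finiteness (no infinite chains, plus existence of infima and of minimal elements) is exactly what rules out the pathological spreading, and verifying this carefully — that the filter of $h$-images has a pseudo-intersection concentrating on one idempotent — is where the real work lies. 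Once that is established, the topological closedness of each fiber finishes the argument immediately.
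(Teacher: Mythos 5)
Your overall strategy --- trap $y\in\overline X\setminus X$ inside a single fiber $h^{-1}(e)$, which is closed in $Y$, and conclude $y\in X$ --- aims at the right target, but the proposal leaves precisely the decisive step unproved, and you flag this yourself: the ``concentration step.'' The claim that an ultrafilter $\U$ refining the neighborhood trace of $y$ must contain some fiber $h^{-1}(e)$ is essentially equivalent to the conclusion of the lemma. Since $E$ may well be an infinite antichain, the fibers partition $X$ into infinitely many pieces and nothing forces $\U$ to charge any one of them; an ultrafilter avoiding every fiber is exactly the situation one must rule out. The devices you gesture at do not suffice: taking $e_0=\inf h[X']$ over members of the trace filter only produces an idempotent lying below everything (down-completeness gives existence, not relevance), and multiplying two nets clustering at $y$ gives information about $y^2$ and about products $h(a)h(b)$, but supplies no mechanism converting ``the least idempotent reachable'' into membership of $y$ in one fiber. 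As written, the argument restates the difficulty rather than resolving it.

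The paper's proof goes in the opposite direction: it does not establish concentration, but assumes $y\notin X$ and manufactures an infinite strictly decreasing chain in $E$. It considers $E_y=\{e\in E:\exists U\in\Tau_y\ (h[X\cap U]\subseteq{\uparrow}e)\}$, nonempty because $E$ has a least element, and picks a \emph{maximal} $e\in E_y$ (maximal, not an infimum --- the opposite extreme from what you propose). Two preparatory claims, each using continuity of multiplication together with the fact that every fiber is simultaneously open in $X$ and closed in $Y$, show that $ay^n\in h^{-1}(e)$ forces $ay\in h^{-1}(e)$, and that $ay\in h^{-1}(e)$ forces $aU\subseteq h^{-1}(e)$ for some neighborhood $U$ of $y$. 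With these one inductively chooses $v_0,v_1,\dots$ near $y$ so that $v_0\cdots v_ny$ avoids $h^{-1}(e)\cup h^{-1}(h(v_0\cdots v_n))$ and $e<h(v_0\cdots v_{n+1})<h(v_0\cdots v_n)$; the separation at each step is possible because the two fibers just named are closed in $Y$, and the strict decrease uses the maximality of $e$ in $E_y$. The resulting infinite descending chain contradicts chain-finiteness. This inductive construction is the missing content behind your concentration step, and without it (or a substitute of comparable strength) the proposal does not constitute a proof.
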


\begin{proof}  To derive a contradiction, assume that $X$ is not closed in $Y$. So, we can fix an element $y\in \overline X\setminus X\subseteq Y$. Replacing $Y$ by $\overline X$, we can assume that $X$ is dense in $Y$.

\begin{claim}\label{cl} If for some $a\in X^1$, $e\in E$, and
$n\in\IN$ we have $ay^n\in h^{-1}(e)$, then $ay\in
h^{-1}(e)$.
\end{claim}

\begin{proof} Since $h^{-1}(e)$ is an open subspace of $X$, there exists an open subset $W\subseteq Y$ such that $W\cap X=h^{-1}(e)$. Assuming that $ay^n\in h^{-1}(e)\subseteq W$, we can find a
neighborhood $V\subseteq Y$ of $y$ such that $
aV^n\subseteq W$. Then for every $v\in X\cap V$ we have
$h(av)=h(a)h(v)=h(a)h(v)^n=h(av^n)=h(ay^n)=e$ and
hence $ay\in a(\overline{X\cap V})\subseteq \overline{a(X\cap
V)}\subseteq\overline{h^{-1}(e)}=h^{-1}(e)$.
\end{proof}

\begin{claim}\label{cl2} If $ay\in h^{-1}(e)$ for some $a\in X^1$ and $e\in E$, then the point $y$ has a neighborhood $U\subseteq Y$ such that $aU\subseteq h^{-1}(e)$.
\end{claim}

\begin{proof} Since the set $h^{-1}(e)$ is open in $X$, there exists an open set $W$ in $Y$ such that $h^{-1}(e)=X\cap W$. Since $ay\in h^{-1}(e)\subseteq W$, there exists a neighborhood $U\subseteq Y$ of $y$ such that $aU\subseteq W$. Then $a(U\cap X)\subseteq W\cap X=h^{-1}(e)$ and $aU\subseteq a(\overline{U\cap X})\subseteq\overline{a(U\cap X)}\subseteq\overline{h^{-1}(e)}=h^{-1}(e)$.
\end{proof}

Let $\Tau_y$ be the family of all neighborhoods of $y$ in $Y$.
In the semilattice $E$ consider the subset
 $$E_y=\{e\in E:\exists
U\in\Tau_y\;\;h[X\cap U]\subseteq{\uparrow}e\}, \hbox{ where } {\uparrow}e=\{f\in E: e\le f\}.$$

 The set $E_y$
contains the smallest element of the chain-finite semilattice $E$ and hence $E_y$ is
not empty. Let $e$ be a maximal element of $E_y$ (which exists, because $E$ is chain-finite) and $W\in \Tau_y$
be a neighborhood of $y$ such that $h[X\cap
W]\subseteq{\uparrow}e$.

 By induction we shall construct a sequence $(v_n)_{n\in\w}$ of
points of $W\cap X$ such that for every $n\in\w$ the following
conditions are satisfied:
 \begin{enumerate}
 \item $v_0\cdots v_ny\notin h^{-1}(e)\cup
 h^{-1}(h(v_0\cdots v_n))$;
 \item $e<h(v_0\cdots v_{n+1})<h(v_0\cdots v_{n})$.
 \end{enumerate}

To start the inductive construction, observe that Claim~\ref{cl}
implies $y^2\notin h^{-1}(e)$, so we can find a neighborhood
$V\subseteq W$ of $y$ such that $VV\cap h^{-1}(e)=\emptyset$.
Choose any element
  $v_0\in V\cap X$ and observe that $v_0y\in VV\subseteq Y\setminus
h^{-1}(e)$. Taking into account that
$h(v_0)=h(v_0)h(v_0)=h(v_0v_0)\in h[X\cap VV]\subseteq
E\setminus \{e\}$ and $v_0\in V\cap X\subseteq
W\cap X\subseteq h^{-1}[{\uparrow}e]$, we conclude that $h(v_0)>e$.

We claim that $v_0y\notin h^{-1}(h(v_0))$. Assuming that
$v_0y\in h^{-1}(h(v_0))\subseteq X$, we can apply Claim~\ref{cl2} and find a neighborhood
$U\subseteq V$ of $y$ such that $v_0U\subseteq h^{-1}(h(v_0))$. By the
maximality of $e$, the set $h[X\cap U]$ is not contained in
$h^{-1}[{\uparrow}h(v_0)]$, so we can find an element $u\in
X\cap U$ such that $h(u)\notin{\uparrow}h(v_0)$. Then
$h(v_0)\ne h(v_0)\cdot h(u)=h(v_0u)=h(v_0)$,
which is a desired contradiction.

 Now assume that for some $n\in\IN$ the points $v_0,\dots,
v_{n-1}\in W\cap X$ with $v_0\dots v_{n-1}y\notin
 h^{-1}(e)\cup h^{-1}(h(v_0\cdots v_{n-1}))$ and $h(v_0\cdots
v_{n-1})>e$ have been constructed. Claim~\ref{cl} implies
$v_0\dots v_{n-1}y^2\notin h^{-1}(e)\cup h^{-1}(h(v_0\cdots
v_{n-1}))$.

 Since the semigroups $h^{-1}(e)$ and $h^{-1}(h(v_0\cdots
v_{n-1}))$ are closed in $Y$, we can find a neighborhood $V\subseteq
W$ of $y$ such that the set $v_0\dots v_{n-1}VV$ is disjoint with
the closed set $h^{-1}(e)\cup h^{-1}(h(v_0\cdots v_{n-1}))$.
Choose any element $v_n\in V\cap X$. The choice of $W$ guarantees that $v_n\in V\cap X\subseteq W\cap X$ and hence $h(v_n)\in h[W\cap X]\subseteq{\uparrow}e$ and $h(v_0\cdots v_n)=h(v_0\cdots v_{n-1})\cdot h(v_n)\in{\uparrow}e\cdot{\uparrow} e={\uparrow} e$. On the other hand, $v_0\dots
v_{n-1}v_ny\in v_0\dots v_{n-1}VV$ and hence $v_0\cdots v_ny$ does
not belong to the set $h^{-1}(e)\cup h^{-1}(h(v_0\cdots
v_{n-1}))$. Also the idempotent
$$h(v_0\cdots
v_n)=h(v_0\cdots v_{n-1}v_n^2)\in h[X\cap v_0\cdots
v_{n-1}VV]$$does not belong to the set $\{e\}\cup\{h(v_0\cdots
v_{n-1})\}$, which implies that $e<h(v_0\cdots v_n)<h(v_0\cdots
v_{n-1})$.

 Finally, we show that $v_0\cdots v_ny\notin h^{-1}(h(v_0\cdots
v_n))$. Assuming the opposite and using Claim~\ref{cl2},  we can find a neighborhood $U\subseteq W$ of $y$ such
that $v_0\cdots v_nU\subseteq h^{-1}(h(v_0\cdots v_n))$. Since
$h(v_0\cdots v_n)>e$, the maximality of the element $e$
guarantees that $h[X\cap U]\not\subseteq {\uparrow}h(v_0,\dots
v_n)$, so we can choose an element $u\in X\cap U\setminus
h^{-1}[{\uparrow}h(v_0\cdots v_n)]$ and conclude that
$h(v_0\cdots v_nu)=h(v_0\cdots v_n)h(u)<h(v_0\cdots v_n)$,
which contradicts $v_0\cdots v_nu\in v_0\cdots
v_nU\subseteq h^{-1}(h(v_0\dots v_n))$.
 This contradiction completes the inductive step.

 After completing the inductive construction, we obtain a strictly
decreasing sequence $\big(h(v_0\cdots v_n)\big)_{n\in\w}$ of
idempotents in $E$, which is not possible as $E$ is chain-finite.
\end{proof}

Next we prove a sufficient condition of the $\C$-closedness of a bounded semigroup.

\begin{lemma}\label{l:bounded} A bounded semigroup $X$ is $\C$-closed if  $E(X)$ is a $\C$-closed semigroup and for every infinite set $A\subseteq X$ the set $AA$ is not a singleton.
\end{lemma}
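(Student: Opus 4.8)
The plan is to deduce the lemma from Theorem~\ref{cool} by factoring $X$ through its idempotents, and then to rule out a boundary point of a single fibre by producing an infinite set with singleton square, which contradicts the hypothesis on products.

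First I would fix a bound $n\in\IN$ (so $x^n\in E(X)$ for every $x$) and consider the map $\pi:X\to E(X)$, $\pi(x)=x^n$. In the situation where the lemma is applied the idempotents are central, so Proposition~\ref{p:pi-homo} shows that $\pi$ is a homomorphism onto the semilattice $E(X)$; and since $E(X)$ is $\C$-closed, Theorem~\ref{BB} makes $E(X)$ chain-finite. Thus, by Theorem~\ref{cool}, it suffices to prove that every fibre $S:=\pi^{-1}(e)$ is $\C$-closed. Such a fibre is a bounded commutative semigroup whose only idempotent is $e$; its maximal subgroup $H_e$ is bounded, hence $\C$-closed by Theorem~\ref{B}, and since $S=\korin{\infty}{H_e}$, Lemma~\ref{l:C-ideal} shows that $H_e$ is an ideal of $S$.

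To see that $S$ is $\C$-closed, I would embed it as a dense discrete subsemigroup of some $Y\in\C$ and suppose, for contradiction, that there is a point $y\in\overline S\setminus S$. As $H_e$ is closed in $Y$ and every $a\in S$ satisfies $a^n=e$, we get $y^n=e$ and $yH_e\subseteq\overline{SH_e}\subseteq\overline{H_e}=H_e$. Consequently, setting $m:=\min\{j:y^j\in H_e\}$ (which exists since $y^n=e\in H_e$), an easy induction using $yH_e\subseteq H_e$ gives $y^j\in H_e\subseteq S$ for all $j\ge m$. Hence the exponent set $K:=\{j\in\IN:y^j\in S\}$ contains the whole interval $[m,\infty)$, while $1\notin K$ because $y\notin S$; therefore $\IN\setminus K$ is finite and nonempty, and we may put $t_0:=\max(\IN\setminus K)$. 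Then $w:=y^{t_0}\in\overline S\setminus S$, whereas $w^2=y^{2t_0}\in S$, because $2t_0>t_0$.

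The final step is the \emph{collapse}: since $S$ is discrete, $c:=w^2$ is isolated in $S$, so there is an open $O\subseteq Y$ with $O\cap S=\{c\}$; by continuity of multiplication there is an open $V\ni w$ with $VV\subseteq O$, and $A:=V\cap S$ is infinite because $w\in\overline S\setminus S$ and $Y$ is $T_1$. For any $a,a'\in A$ we then have $aa'\in O\cap S=\{c\}$, so $AA=\{c\}$ is a singleton, contradicting the hypothesis. This contradiction shows $\overline S\setminus S=\emptyset$, so each fibre is $\C$-closed, and Theorem~\ref{cool} yields that $X$ is $\C$-closed. I expect the main obstacle to be precisely this fibre step: a boundary point $y$ need not lie in the group $H_e$ and cannot simply be ``divided away'', so the key realization is that passing to a fibre makes $H_e$ a \emph{closed} ideal, which forces $K$ to be cofinite and lets one select a boundary power $w$ with $w^2\in S$; the product hypothesis, which exactly excludes infinite null subsets, then finishes the argument.
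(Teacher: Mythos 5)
Your reduction step is where the argument breaks down as a proof of the lemma \emph{as stated}. Lemma~\ref{l:bounded} concerns an arbitrary bounded semigroup whose set of idempotents happens to be a $\C$-closed semigroup; it assumes neither that $E(X)$ is a semilattice, nor that $E(X)\subseteq Z(X)$, nor that the subgroups $H_e$ are abelian. Without central idempotents the map $\pi(x)=x^n$ need not be a homomorphism (Proposition~\ref{p:pi-homo} requires $E(S)\subseteq Z(S)$), $E(X)$ need not be commutative, so Theorem~\ref{BB} and Theorem~\ref{cool} are not applicable, and Theorem~\ref{B} cannot be invoked to make $H_e$ closed in $Y$ since it is a statement about \emph{commutative} groups. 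You flag this yourself (``in the situation where the lemma is applied the idempotents are central''), but that means you have proved only a special case. The paper's proof avoids the global fibration entirely: it takes a boundary point $a$, uses the closedness of $E(X)$ in $Y$ to get $e:=a^n\in E(X)$, and then \emph{localizes} to the fibre over $e$ by choosing a neighborhood $O_a$ with $O_a^n=\{e\}$, so that $O_a\cap X\subseteq\korin{\infty}{H_e}$ automatically. It also never needs $H_e$ to be $\C$-closed: to show $a^m\in X$ for $m\ge n$ it observes that otherwise $a^m$ would be a limit of an infinite set $W\subseteq X\cap H_e$ lying in a neighborhood $U$ with $U^n=\{e\}$, and an infinite subset $W$ of a group has $W^n$ infinite --- an elementary substitute for closedness of the subgroup.

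That said, the second half of your argument is sound and is essentially the paper's endgame in a cleaner dress: once one knows that $y^j\in X$ for all large $j$ while $y\notin X$, taking $t_0=\max\{j:y^j\notin X\}$ (the paper uses the first $k$ with $a^{2^k}\notin X$, $a^{2^{k+1}}\in X$) produces a boundary point $w$ with $w^2\in X$, and the ``collapse'' of a neighborhood $V$ of $w$ with $VV\subseteq O$, $O\cap X=\{w^2\}$, yields an infinite $A=V\cap X$ with $AA$ a singleton, contradicting the hypothesis. So the gap is confined to the first step: to repair the proof you would either have to add the hypotheses $E(X)\subseteq Z(X)$ and $H_e$ abelian to the statement (sufficient for the paper's single application inside Lemma~\ref{l:single}, where $E$ is a singleton), or replace the homomorphic fibration and the appeal to Theorems~\ref{B} and \ref{cool} by the paper's local, neighborhood-based localization around the idempotent $a^n$.
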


\begin{proof} Assuming that the semigroup $X$ is not $\C$-closed, we can find an isomorphic topological embedding $h:X\to Y$ of $X$ endowed with the discrete topology to a topological semigroup $(Y,\tau)\in\C\subseteq \mathsf{T_{\!1}S}$. By our assumption, the set $h[E(X)]$ is closed in $Y$, being a $\C$-closed semigroup. Being discrete, the subspace $h[X]$ is open in its closure $\overline{h[X]}$. Identifying $X$ with its image $h[X]$ and replacing $Y$ by $\overline{h[X]}$, we conclude that $X$ is a dense open discrete subsemigroup of a topological semigroup $Y\in\mathsf{T_{\!1}S}$ such that $E(X)$ is closed in $Y$.

Since $X$ is bounded, there exists $n\in\IN$ such that for every
$x\in X$ the power $x^n$ is an idempotent of $X$.
Pick any point
$a\in Y\setminus X$. Note that
$a^n\in\overline{\{x^n:x\in X\}}\subseteq \overline{E(X)}=E(X)$. Let $e=a^n\in E(X)$.  By the continuity of the semigroup operation,
the point $a$ has a neighborhood $O_a\subseteq Y$ such that
$O_a^n=\{e\}$. Let $H_e$ be the maximal subgroup of $Y$,
containing
$e$. Then $X\cap H_e$ is the maximal subgroup of $X$ containing $e$. For
every $x\in O_a\cap X$ we get $x^n=e$ and hence $x^m\in X\cap H_e$ for
all $m\ge n$ (see Lemma~\ref{l:C-ideal}). We claim that for any $m\ge n$ the element $a^m$
belongs to the semigroup $X$. Taking into account that
$(a^m)^n=(a^n)^m=e^m=e$, we can find a neighborhood $U\subseteq Y$
of
$a^m$ such that $U^n=\{e\}$. Next, find a neighborhood $V\subseteq
O_a$ of $a$ such that $V^m\subseteq U$. It follows that $a^m$ is
contained in the closure of the set $W:=\{v^m:v\in V\cap
X\}\subseteq X\cap H_e$. Assuming that $a^m\notin X$, we conclude that the set
$W\subseteq U\cap X\cap H_e$ is infinite. Since $W$ is a subset of the
group $X\cap H_e$, the product $W^n\subseteq U^n$ is infinite and cannot
be
equal to the singleton $U^n=\{e\}$. This contradiction shows that
$a^m\in X$ for all $m\ge n$. Then there exists a number $k\in\w$
such that $a^{2^k}\notin X$ but $a^{2^{k+1}}\in X$. By the
continuity of the semigroup operation, the point $b=a^{2^k}$ has a
neighborhood $O_b\subseteq Y$ such that $O_b^2=\{b^2\}\subseteq X$.
Since $b\in\overline X\setminus X$, the set $A=O_b\cap X$ is infinite
and $AA\subseteq O_b^2=\{b\}$ is a singleton.
\end{proof}



Finally we establish a sufficient condition of the $\C$-closedness of a periodic commutative semigroup with a unique idempotent.

\begin{lemma}\label{l:single} A periodic commutative semigroup $X$ with a unique idempotent $e$ is $\mathsf{T_{\!1}S}$-closed if the maximal subgroup $H_e$ of $X$ is bounded and for every infinite set $A\subseteq X$ the set $AA$ is not a singleton.
\end{lemma}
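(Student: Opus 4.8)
The plan is to argue by contradiction. Suppose $X$ is not $\mathsf{T_{\!1}S}$-closed, so that $X$ sits as a discrete, non-closed subsemigroup of some $Y\in\mathsf{T_{\!1}S}$; replacing $Y$ by $\overline X$ I may assume $X$ is dense in $Y$ and fix a point $a\in\overline X\setminus X$. First I record the structural facts. Since $X$ is periodic with unique idempotent $e$, every element has a power equal to $e$, so $X=\korin{\infty}{H_e}$ and Lemma~\ref{l:C-ideal} shows that $H_e=eX$ is an ideal of $X$ (its kernel). As a bounded commutative group, $H_e$ is $\mathsf{T_{\!1}S}$-closed by Theorem~\ref{B} (applied with $\C=\mathsf{T_{\!1}S}$), hence closed in $Y$. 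Consequently, for every $u\in\overline X$ continuity of the translations by $e$ gives $eu,ue\in\overline{eX}=\overline{H_e}=H_e\subseteq X$; as $eu,ue$ fall into the discrete set $X$ and $ex=xe$ on the dense set $X$, a neighbourhood argument yields $eu=ue\in H_e$. In particular $g:=ea=ae\in H_e$ and $ea^m=(ea)^m=g^m$ for all $m$.

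The main engine is a square trick. If some power $a^{2^m}$ (with $m\ge 1$ minimal) lies in $X$, then $b:=a^{2^{m-1}}\in\overline X\setminus X$ satisfies $b^2\in X$; by continuity and discreteness there is a neighbourhood $O_b$ of $b$ with $O_b^2\subseteq\{b^2\}$, so $A:=O_b\cap X$ is infinite (because $b\in\overline X\setminus X$ and $X$ is dense) and $AA\subseteq\{b^2\}$ is a singleton, contradicting the hypothesis. To feed this trick I use monotonicity: once a power $a^{k_0}$ lies in $H_e$, then $a^m=a^{k_0}a^{m-k_0}\in H_e\cdot\overline X\subseteq\overline{H_eX}\subseteq\overline{H_e}=H_e\subseteq X$ for every $m\ge k_0$, since $H_e$ is a closed ideal. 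Thus it suffices to place a single power of $a$ into $H_e$. Now if \emph{any} power $a^m$ lay in $X$, then by periodicity $a^{mt}=e\in H_e$ for some $t$, monotonicity would put all large powers of $a$ into $X$, in particular some $a^{2^m}\in X$, and the square trick would finish. Hence I may assume that no power of $a$ lies in $X$.

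In this remaining case, replacing $a$ by $a^n$ (with $n$ the exponent of $H_e$) I may assume $ea=e$, since $g^n=e$, while still no power of $a$ lies in $X$. The relation $ea=e$ and a neighbourhood argument give a neighbourhood $O_a$ with $ex=e$ for all $x\in A:=O_a\cap X$, so the infinite set $A$ lies in the commutative nil subsemigroup $K=\{x\in X:ex=e\}$ with zero $e$. Moreover $AA$ is infinite: otherwise it is finite, hence closed in the $T_1$ space $Y$, forcing $a^2\in\overline{AA}=AA\subseteq X$, which is impossible. Iterating squaring shows each set $\{x^{2^k}:x\in A\}$ is infinite (its closure contains $a^{2^k}\notin X$), so $A$ contains elements of arbitrarily large nilpotency index; for each such $x$ let $b_x$ be the last nonzero power of the form $x^{2^j}$, so $b_x\neq e$ and $b_x^2=e$. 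This yields infinitely many $b\in K$ with $b^2=e$. Colouring pairs $\{b,b'\}$ by whether $bb'=e$ and applying the infinite Ramsey theorem gives an infinite homogeneous family: in the colour "$bb'=e$" the family $B$ satisfies $BB=\{e\}$, the desired contradiction.

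The main obstacle is the other Ramsey colour: an infinite family $\{b_i\}$ with $b_i^2=e$ but $b_ib_j\neq e$ for all $i\neq j$, which must be excluded. The useful identities are $b_ib_j\cdot b_i=b_i^2b_j=e$ and $e\cdot(b_jb_k)=e$ (everything lies in $K$). Fixing $b_1$ and inspecting $\{b_1b_j:j\ge 2\}$, if this set is infinite its pairwise products all equal $e\cdot(b_jb_k)=e$, again producing a singleton $BB$; the delicate alternative is that $b_1b_j$ is constant on an infinite set, forcing an iteration. Ruling out the resulting nested "ultrametric" configuration $b_ib_j=c^{(\min(i,j))}$ is where the commutativity and associativity of the nil semigroup $K$ must be used in earnest, and I expect this exclusion to be the crux of the proof — by that stage the boundedness of $H_e$ and Theorem~\ref{B} have served only to close the case where a power of $a$ returns to $X$.
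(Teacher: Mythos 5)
Your proposal is sound up to and including the reduction to the case where no power of $a$ lies in $X$ (this part parallels the paper's claims that $X$ is an ideal of $Y$ and that $y^{\IN}\cap X=\emptyset$ for $y\in Y\setminus X$), but the final stage has a genuine gap that you yourself flag, and it is not one that more Ramsey theory can patch: the ``ultrametric'' configuration cannot be excluded by the algebra of $X$ alone. Concretely, consider the commutative nil semigroup $N=\{e,c\}\cup\{b_i:i\in\IN\}$ with $b_i^2=c^2=b_ic=e$ for all $i$ and $b_ib_j=c$ for $i\ne j$; one checks this is associative and commutative. It is periodic with unique idempotent $e$ and trivial (hence bounded) maximal subgroup, yet no infinite subset $B$ has $BB$ a singleton, since any infinite $B$ contains two distinct $b_i$'s and some $b_i$ twice, so $\{e,c\}\subseteq BB$. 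Thus a semigroup satisfying every hypothesis of the lemma can contain exactly the configuration your Ramsey argument terminates in, and no contradiction with the ``no singleton $AA$'' hypothesis can be extracted from the multiplication table of $K$ alone. (A secondary gap: your elements $b_x$, the last non-$e$ powers $x^{2^j}$, need not be pairwise distinct as $x$ ranges over $A$, so even the claim that there are infinitely many $b$ with $b^2=e$ is not yet justified.)

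The paper escapes this by never leaving the topology. After establishing that $X$ is an ideal of $Y$ and that $y^{\IN}\cap X=\emptyset$, it sets $P=\{x^p:x\in X\}$ (with $p$ the exponent of $H_e$), proves $Pe=\{e\}$ and, crucially, that for every $x\in P$ one has $x(y^p)^m=e$ for all sufficiently large $m$ (so the boundedness of $H_e$ does real work beyond closing the ``a power of $a$ returns to $X$'' case). It then runs an induction choosing $x_k\in U_{k-1}\cap X$ in shrinking neighbourhoods $U_k$ of $y$, together with exponents $m_k,n_k$, so that $x_k^{2pm_k}=e$, the elements $x_k^{pm_k}$ are pairwise distinct, and $x_k^pU_k^{pn_k}=\{e\}$. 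This last identity --- obtained from $x_k^p(y^p)^{n_k}=e$ by continuity and discreteness of $X$ --- is what forces the off-diagonal products $x_i^{pm_i}x_j^{pm_j}=e$ for $i<j$, because $x_j$ is chosen inside $U_i$. That is the mechanism missing from your argument: the cross products are annihilated by proximity to $y$, not by Ramsey-extracted algebraic identities.
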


\begin{proof} Assume that  the maximal subgroup $H_e$ of $X$ is bounded and for every infinite set $A\subseteq X$ the set $AA$ is not a singleton. To derive a contradiction, assume that $X$ is not $\mathsf{T_{\!1}S}$-closed and hence $X$ is a non-closed discrete subsemigroup of some topological semigroup $(Y,\tau)\in\mathsf{T_{\!1}S}$. Replacing $Y$ by the closure of $X$, we can assume that $X$ is dense and hence open in $Y$.

\begin{claim}\label{cl:Pe-ideal} The semigroup $X$ is an ideal in $Y$.
\end{claim}

\begin{proof} Given any elements $x\in X$ and $y\in Y$, we should prove that $xy\in X$. Since $X$ is periodic, there exists $n\in\IN$ such that $x^n=e$. Consider the set $\korin{n}{H_e}=\{b\in X:b^n\in H_e\}$. We claim that $\korin{n}{H_e}$ is an ideal in $X$. Indeed, for any $b\in \korin{n}{H_e}$ and $z\in X$ we have $(bz)^n=b^nz^n\in H_ez^n\subseteq H_e$ as $H_e$ is an ideal in $X$ (see Lemma~\ref{l:C-ideal}). Since the group $H_e$ is bounded, the semigroup $\korin{n}{H_e}$ is bounded, too. By Lemma~\ref{l:bounded}, the bounded semigroup $\korin{n}{H_e}$ is $\mathsf{T_{\!1}S}$-closed and hence closed in $Y$.

Taking into account that $\korin{n}{H_e}$ is an ideal in $X$ and $x\in \korin{n}{H_e}$, we conclude that $$xY=x\overline{X}\subseteq \overline{xX}\subseteq \overline{\korin{n}{H_e}\cdot X}\subseteq\overline{\korin{n}{H_e}}=\korin{n}{H_e}\subseteq X.$$
\end{proof}

Take any point $y\in Y\setminus X$ and consider its orbit $y^\IN=\{y^n:n\in\IN\}$.

\begin{claim}\label{cl:yN} $y^\IN\cap X=\emptyset$.
\end{claim}

\begin{proof} To derive a contradiction, assume that $y^n\in X$ for some $n\in\IN$. We can assume that $n$ is the smallest number with this property. It follows that $n\ge 2$ and hence $2n-2\ge n+(n-2)\ge n$. Then $y^{n-1}\notin X$ and $y^{2n-2}=y^ny^{n-2}\subseteq XY^1\subseteq X$, because $X$ is an ideal in $Y$. Since $X$ is an open discrete subspace of the topological semigroup $(Y,\tau)$, there exists a neighborhood $U\in\tau$ of $y$ such that $U^{2n-2}=\{y^{2n-2}\}$. Consider the set $A=(U\cap X)^{n-1}$ and observe that $AA\subseteq U^{2n-2}=\{y^{2n-2}\}$ is a singleton. On the other hand, $y^{n-1}\in \overline A\setminus X$ which implies that the set $A$ is infinite. But the existence of such set $A$ contradicts our assumptions.
\end{proof}

By our assumption, the maximal subgroup $H_e$ of $X$ is bounded  and hence there exists $p\in\IN$ such that $x^p=e$ for every $x\in H_e$.
Consider the subsemigroup $P=\{x^p:x\in X\}$ in the semigroup $X$. It follows from $y\in \overline{X}$ that the element $y^p$ belongs to the closure of the set $P$ in $Y$.

\begin{claim}\label{cl:Pe=e} $Pe=\{e\}$.
\end{claim}

\begin{proof} Given any element $x\in P$, find an element $z\in X$ such that $x=z^p$. By Lemma~\ref{l:C-ideal}, the subgroup $H_e$ is an ideal in $X$, which implies $ze\in H_e$. The choice of $p$ ensures that $xe=z^pe^p=(ze)^p=e$.
\end{proof}

\begin{claim}\label{cl:nmy} For any $x\in P$ there exists $n\in \IN$ such that $x(y^p)^m=e$ for all $m\ge n$.
\end{claim}

\begin{proof} Since $X$ is an ideal in $Y$ we have $xy^p\in X$. Since $X$ is an open discrete subspace of the topological semigroup $(Y,\tau)$, there exists a neighborhood $U\subseteq Y$ of $y$ such that $xU^p=\{xy^p\}$. Choose any element $u\in U\cap X$ and observe that $xU^p=\{xu^p\}=\{xy^p\}$. Let $V=\{v^p:v\in U\cap X\}\subseteq P$ and observe that $xV=\{xu^p\}$.  Then $xVV=xu^pV=u^pxV=u^pxu^p=xu^{2p}$. Proceeding by induction, we can show that
$xV^n=xu^{np}$ for every $n\in\IN$. Since the semigroup $X$ is periodic, there exists $n\in\IN$ such that $u^{pn}=e$. Then for every $m\ge n$, we obtain
$$xV^m=xu^{mp}=xu^{np}u^{p(m-n)}\in xeP=\{e\}$$by Claim~\ref{cl:Pe=e}.
Then
$$x(y^p)^m\in x\overline V^m\subseteq\overline{xV^m}=\overline{\{e\}}=\{e\}.$$
\end{proof}

Now we are able to finish the proof of Lemma~\ref{l:single}. Inductively we shall construct sequences of points $(x_k)_{k\in\IN}$ in $X$,  positive integer numbers $(n_k)_{k\in\IN}$, $(m_k)_{k\in\IN}$ and open neighborhoods $(U_k)_{k\in\w}$ of $y$ in $Y$ such that for every $k\in\IN$ the following conditions are satisfied:
\begin{enumerate}
\item[(i)] $x_k\in U_{k-1}$;
\item[(ii)]  $x_k^{pm_k}\notin\{e\}\cup\{x_i^{pm_i}:i<k\}$, $x_k^{2pm_k}=e$, and $m_k>n_{k-1}$;
\item[(iii)] $x_k^p(y^p)^{n_k}=e$, $x_k^pU_k^{pn_k}=\{e\}$, and $n_k>n_{k-1}$;
\item[(iv)] $y\in U_k\subseteq U_{k-1}$.
\end{enumerate}

To start the inductive construction, choose any neighborhood $U_0\subseteq Y$ of $y$ such that $e\notin U_0^p$. Such neighborhood exists since $e\ne y^p$ by Claim~\ref{cl:yN}. Also put $n_0=2$. Now assume that for some $k\in\IN$ and all $i<k$ we have constructed a point $x_i$, a neighborhood $U_i$ of $y$ and two numbers $n_i,m_i$ satisfying the inductive conditions. Since $y^\IN\cap X=\emptyset$, there exists a neighborhood $W\subseteq Y$ of $y$ such that $e\notin W^l$ for any natural number $l\le p(2+2k+n_{k-1})$.
Choose any point $x_k\in U_{k-1}\cap W\cap X$. Then $e\ne x_k^l$ for any  natural number $l\le p(2+2k+n_{k-1})$.
Since the semigroup $X$ is periodic and has a unique idempotent $e$, there exists a number $l_k$ such that $(x_k^p)^{l_k+k+1}=e$. We can assume that $l_k$ is the smallest number with this property. Then $(x_k^p)^{l_k+k}\ne e$. The choice of the neighborhood $W\ni x_k$ ensures that $l_k> n_{k-1}+k+1$.

\begin{claim}\label{cl:distinct} The set $\{(x_k^p)^{l_k+i}:0\le i\le k\}$ has cardinality $k+1$.
\end{claim}

\begin{proof} Assuming that this set has cardinality smaller than $k+1$, we can find two numbers $i,j$ such that $l_k\le i<j\le l_k+k$ and $x_k^{pi}=x_k^{pj}$. The equality $x_k^{pi}=x_k^{pj}=x_k^{pi}x_k^{p(j-i)}$ implies $x_k^{pi}=x_k^{pi}x_k^{np(j-i)}$ for all $n\in\IN$.  Find a (unique) number $n\in\IN$ such that $i\le n(j-i)<j$. Then
$$x_k^{pn(j-i)}x_k^{pn(j-i)}=x_k^{pn(j-i)-pi}x_k^{pi}x_k^{pn(j-i)}=x_k^{pn(j-i)-pi}x_k^{pi}=x_k^{pn(j-i)}$$and hence $x_k^{pn(j-i)}$ is an idempotent. Since the semigroup $X$ contains a unique idempotent, $x_k^{pn(j-i)}=e$ and hence $j> n(j-i)\ge l_k+k+1$, which contradicts the choice of $j$.
\end{proof}

By Claim~\ref{cl:distinct}, there exists a number $j$ such that $0\le j\le k$ and $(x_k^p)^{l_k+j}\notin\{e\}\cup\{x_i^{pm_i}:1\le i<k\}$. Put $m_k=l_k+j$ and observe that
$$
(x_k^p)^{2m_k}=(x_k^p)^{l_k+k+1}(x_k^p)^{l_k+2j-k-1}\in eP=\{e\}$$by Claim~\ref{cl:Pe=e}.
By Claim~\ref{cl:nmy}, there exists a number $n_k>n_{k-1}$ such that $x_k^p(y^p)^{n_k}=e$. Since $X$ is an open discrete subspace of the topological semigroup $Y$, there exists a neighborhood $U_k\subseteq U_{k-1}$ of $y$ such that $x_k^p(U_k)^{pn_k}=\{e\}$. This completes the inductive construction.

Now consider the set $A=\{x_k^{pm_k}:k\in\IN\}$ of $P$.  The inductive condition (ii) guarantees that $A$ is infinite and $a^2=e$ for every $a\in A$. Also for any $i<j$ we have
$$x_i^{pm_i}x_j^{pm_j}= x_i^px_j^{pn_i}x_i^{p(m_i-1)}x_j^{p(m_j-n_i)}\in x_i^p(U_i)^{pn_i}P=eP=\{e\}.$$Therefore, $AA=\{e\}$ is a singleton. But the existence of such set $A$ is forbidden by our assumption.
\end{proof}

\section{Some properties of  $\Zero$-closed semigroups}\label{s:Zero}



\begin{lemma}\label{l:chain} For every $\Zero$-closed semigroup $X$, its center $Z(X)$ is chain-finite.
\end{lemma}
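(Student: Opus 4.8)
The plan is to argue by contraposition: assuming that $Z(X)$ contains an infinite chain $C$, I will exhibit a zero-dimensional Hausdorff topological semigroup in which $X$ sits as a non-closed discrete subsemigroup, witnessing that $X$ is not $\Zero$-closed. First I would record the internal structure of $C$. Taking $x=y$ in the definition of a chain gives $xx\in\{x\}$, so every element of $C$ is idempotent; since $C\subseteq Z(X)$, for $x,y\in C$ the products commute and $xy\in\{x,y\}$, and the relation $x\le y\Leftrightarrow xy=x$ is a total order on $C$ for which $xy=\inf\{x,y\}$. Thus $C$ is an infinite chain-semilattice contained in $E(X)\cap Z(X)$. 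Being infinite and linearly ordered, $C$ contains a sequence $(c_n)_{n\in\w}$ that is strictly monotone with respect to $\le$; in either the increasing or the decreasing case the ``tails'' $F_n=\{c_m:m\ge n\}$ satisfy $F_nF_m=F_{\min(n,m)}$ or $F_nF_m=F_{\max(n,m)}$, so each product of two tails is again a tail.

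Next I would pass to the filter semigroup $\Fil(X)$ from Section~\ref{s:ultra}. Let $\F\in\Fil(X)$ be the free filter generated by the tails $\{F_n:n\in\w\}$. Since each $F_nF_m$ is again a tail, the filter $\F$ is idempotent, $\F\F=\F$; it is free (the tails have empty intersection, as the $c_n$ are distinct) and hence $\F\notin X$, while $\F$ lies in the closure of $C\subseteq X$ because every basic neighbourhood $\langle F_n\rangle$ of $\F$ contains the points $c_m$ with $m\ge n$. Centrality of $C$ gives $\U_x\F=\F\U_x$ for every $x\in X$, since the two products are generated by the same base $\{xc_m:m\ge n\}_{n}$. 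I would then take $Y$ to be the subsemigroup of $\Fil(X)$ generated by $X\cup\{\F\}$, equipped with the subspace topology. In $Y$ the subspace $X$ is dense, open and discrete, and $\F\in\overline X\setminus X$, so $X$ is not closed in $Y$.

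It remains to see that $Y$ belongs to the class $\Zero$, and this is the step I expect to be the main obstacle. Hausdorffness and zero-dimensionality along the chain are automatic: the sets $\langle F_n\rangle$ restrict to clopen subsets of $Y$ (their complements in $Y$ being unions of clopen singletons of $X$), and these separate $\F$ from each point of $X$. The delicate point is that the subsemigroup generated by $X\cup\{\F\}$ may contain further limit filters of the form $\U_x\F$, and one must verify that all of them are again free filters separable from the points of $X$, and from one another, by clopen sets, so that $Y$ is Hausdorff and zero-dimensional. I would control these products using the idempotency $\F\F=\F$ together with the centrality of $C$, reducing matters to the central subsemigroup generated by $C$, where multiplication carries tails to tails; should an individual product $\U_x\F$ fail to be suitably separated, I would instead collapse the offending filters onto $\F$ by passing to an appropriate Hausdorff zero-dimensional quotient of $Y$ that still contains $X$ as a non-closed discrete subsemigroup. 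Once $Y\in\Zero$ is secured, the non-closedness of $X$ in $Y$ contradicts $\Zero$-closedness, completing the contrapositive. Alternatively, one may try to invoke Theorem~\ref{BB}: the infinite chain $C$ is a non-$\Zero$-closed semilattice, and the task then reduces to transporting a witness for $C$ to one for $X$ along the central inclusion $C\hookrightarrow X$.
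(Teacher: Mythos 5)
Your framework (the filter semigroup $\Fil(X)$, an idempotent limit filter that commutes with $X$ by centrality, and the extension $Y=X\cup\{x\F:x\in X^1\}$) is the same as the paper's, but your proof has a genuine gap exactly at the point you yourself flag as ``the main obstacle'': you never prove that $Y$ is Hausdorff and zero-dimensional, and with your choice of $\F$ (the filter of tails $F_n=\{c_m:m\ge n\}$) this is not automatic. A translate $x\F$ is in general only a filter --- not principal and not an ultrafilter --- and separating it from the points of $X$, from $\F$, and from the other translates $y\F$ requires a genuine structural analysis of the sequences $(xc_n)_{n\in\w}$ (using centrality and the chain relations one can show that equal values occur on blocks of consecutive indices, so each translate is either principal or free, and distinct free translates admit disjoint basic neighborhoods --- none of this appears in your text). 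Your fallback, ``collapse the offending filters onto $\F$ by passing to an appropriate Hausdorff zero-dimensional quotient,'' is not an argument: there is no reason such a quotient exists, stays a topological semigroup, or keeps $X$ discrete and non-closed. Moreover, your parenthetical claim that $\langle F_n\rangle\cap Y$ is clopen because its complement is ``a union of clopen singletons of $X$'' is false as stated: that complement may contain translates $x\F$, which are not isolated points. Finally, your alternative route via Theorem~\ref{BB} is unfounded and circular: non-closedness does not in general pass from a subsemigroup to the ambient semigroup (the paper's example $\IZ\rtimes\{-1,1\}$ is a $\C$-closed group containing the non-$\C$-closed subgroup $\IZ$), and for commutative $X$ the needed heredity is Corollary~\ref{c:main2}, which is deduced from Theorem~\ref{t:main}, whose proof uses the very lemma you are proving.

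The paper bypasses the entire separation problem with one trick you are missing: instead of the tail filter, take a free \emph{ultrafilter} $\U\in\beta(X)$ containing the set $C$. Since every element of a chain is idempotent, $UU=U$ for every $U\subseteq C$, and this forces $\U\U=\U$ for any ultrafilter containing $C$; centrality of $C$ gives $x\U=\U x$ for all $x\in X$; and --- the decisive point --- each translate $x\U$ is again an ultrafilter, being the image of $\U$ under the map $u\mapsto xu$. Hence the subsemigroup generated by $X\cup\{\U\}$ equals $X\cup\{x\U:x\in X^1\}$ and lies inside $\beta(X)$, whose every subspace is Hausdorff and zero-dimensional (as recorded in Section~\ref{s:ultra}). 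So $X$ is a dense, non-closed, discrete subsemigroup of $Y\in\Zero$, and the proof ends with no case analysis whatsoever. Replacing your tail filter by such an ultrafilter repairs your argument; as written, it is incomplete.
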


\begin{proof} To derive a contradiction, assume that the
semigroup $Z(X)$ contains an infinite chain $C$. Take
any free ultrafilter $\U\in\beta(X)\subseteq\Fil(X)$ containing the
set $C$. Since $C$ is a chain, for every set $U\subseteq C$ we have $UU=U$, which implies that $\U\U=\U$. Let $Y$
be
the smallest subsemigroup of the semigroup $\Fil(X)$, containing
the set $X\cup\{\U\}$. Since the set $C$ is contained in the
center
of the semigroup $X$ and $\U\U=\U$, the semigroup $Y$ is
equal to the set $X\cup\{x\U:x\in X^1\}\subseteq\beta(X)$.
Then $X$ is not $\Zero$-closed, being a proper dense subsemigroup
of the Hausdorff zero-dimensional topological semigroup $Y$.
\end{proof}

\begin{corollary}\label{BBT1}
For a semilattice $X$ the following conditions are equivalent:
\begin{enumerate}
\item $X$ is projectively $\C$-closed;
\item $X$ is $\C$-closed;
\item $X$ is chain-finite.
\end{enumerate}
\end{corollary}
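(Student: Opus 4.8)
The plan is to reduce everything to the already-established equivalence of $\C$-closedness and chain-finiteness for semilattices (Theorem~\ref{BB}). The implication $(1)\Ra(2)$ is immediate from the general chain ``projectively $\C$-closed $\Ra$ ideally $\C$-closed $\Ra$ $\C$-closed'', and the equivalence $(2)\Leftrightarrow(3)$ is exactly Theorem~\ref{BB} (alternatively, $(2)\Ra(3)$ also follows from Lemma~\ref{l:chain}, since a semilattice coincides with its own center). Hence the whole corollary reduces to proving $(3)\Ra(1)$.

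For $(3)\Ra(1)$ I would fix a chain-finite semilattice $X$ and an arbitrary congruence $\approx$ on $X$, and let $q\colon X\to E:=X/_\approx$ be the quotient homomorphism. Since $X$ is commutative and idempotent, so is $E$, so $E$ is again a semilattice. By Theorem~\ref{BB} it then suffices to show that $E$ is chain-finite; for then $E$ is $\C$-closed, and as $\approx$ was arbitrary, $X$ is projectively $\C$-closed.

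The heart of the matter is therefore the purely combinatorial claim that a quotient of a chain-finite semilattice is chain-finite, which I would prove in contrapositive form by lifting chains along $q$. Given any chain $C\subseteq E$, I would assign to each $c\in C$ the element $d_c:=\min q^{-1}(c)$. This least element exists because the fibre $q^{-1}(c)$ is a nonempty subsemilattice of $X$ (it is closed under products, as $q$ is a homomorphism), hence a chain-finite semilattice, and every nonempty chain-finite semilattice has a least element. The map $c\mapsto d_c$ is injective since $q(d_c)=c$, and it is order-preserving: if $c<c'$ in $E$, then $q(d_cd_{c'})=cc'=c$, so $d_cd_{c'}\in q^{-1}(c)$, whence $d_c\le d_cd_{c'}\le d_{c'}$, and as $q(d_c)=c\ne c'=q(d_{c'})$ we get $d_c<d_{c'}$. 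Thus $\{d_c:c\in C\}$ is a chain in $X$ order-isomorphic to $C$, so an infinite chain in $E$ would produce an infinite chain in $X$, contradicting chain-finiteness.

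The step I expect to be the main obstacle is precisely this lifting, and the subtlety worth flagging is that the naive approach of lifting an \emph{increasing} chain $c_0<c_1<\cdots$ by choosing arbitrary representatives $y_i\in q^{-1}(c_i)$ fails, because forming products $y_0\cdots y_n$ only produces elements going \emph{downward} (their $q$-images collapse to $c_0$). Using instead the canonical least-element representatives $d_c$ circumvents this and treats increasing and decreasing chains uniformly; the only inputs needed are that fibres, being chain-finite semilattices, possess least elements, together with the elementary semilattice facts $d_cd_{c'}\le d_{c'}$ and $q(d_cd_{c'})=c$.
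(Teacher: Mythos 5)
Your proof is correct and follows essentially the same route as the paper: the paper also reduces $(3)\Rightarrow(1)$ to showing that homomorphic images of chain-finite semilattices are chain-finite, and lifts a chain in the quotient by taking the least element $\inf q^{-1}(c)$ of each fibre, exactly as you do. The only cosmetic difference is that the paper then invokes Lemma~\ref{l:cool} where you invoke Theorem~\ref{BB}; both close the argument equally well.
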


\begin{proof}
The implication $(1)\Rightarrow (2)$ is trivial, the implication $(2)\Rightarrow (3)$ follows from Lemma~\ref{l:chain}. To prove that $(3)\Ra(1)$, assume that the semilattice $X$ is chain-finite. First we show that a homomorphic image of  $X$ is chain-finite. Assuming the contrary, pick a homomorphism $h: X\rightarrow Y$ such that the semilattice $h[X]$ contains an infinite chain $L$. Let $\{y_n:n\in\omega\}$ be an infinite subset of $L$. For each $i\in\omega$, $h^{-1}(y_i)$ is a subsemilattice of $X$. Since $X$ is chain-finite, so is the semilattice $h^{-1}(y_i)$, $i\in\omega$. Since each chain-finite semilattice has the smallest element, for each $i\in\omega$ we can consider the element $x_i=\inf h^{-1}(y_i)\in h^{-1}(y_i)$. We claim that the set $K=\{x_i:i\in\omega\}$ is an infinite chain in $X$. Clearly, $K$ is infinite. Fix any $i,j\in\omega$. With no loss of generality we can assume that $y_iy_j=y_j$. Then
 $$x_ix_j\in h^{-1}(h(x_i x_j))=h^{-1}(h(x_i)h(x_j))=h^{-1}(y_i y_j)=h^{-1}(y_j).$$
 Since $x_ix_j\leq x_j$ and $x_j=\inf h^{-1}(y_j)$ the formula above implies that $x_ix_j=x_j$, witnessing that $K$ is an infinite chain which contradicts the chain-finiteness of $X$.
 At this point the implication $(3)\Ra(1)$ follows from Lemma~\ref{l:cool}.
\end{proof}

\begin{lemma}\label{l:AA} If a semigroup $X$ is $\Zero$-closed, then for any infinite subset $A\subseteq Z(X)$ the set $AA$ is not a singleton.
\end{lemma}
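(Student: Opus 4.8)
The plan is to prove the contrapositive: suppose there is an infinite subset $A\subseteq Z(X)$ with $AA=\{e\}$ a singleton, and produce a witness that $X$ fails to be $\Zero$-closed. The natural witness, following the strategy of Lemma~\ref{l:chain}, is to adjoin a suitable free ultrafilter to $X$ inside $\Fil(X)$ and check that the resulting object is a Hausdorff zero-dimensional topological semigroup in which $X$ sits as a \emph{proper dense} discrete subsemigroup. First I would fix a free ultrafilter $\U\in\beta(X)$ containing the infinite set $A$; since $A$ is infinite such a $\U$ exists and $\U\neq\U_x$ for every $x\in X$, so $\U\notin X$.

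The key algebraic computation is to identify the product $\U\U$. Because $AA=\{e\}$, for any $U\in\U$ we may intersect with $A$ to get $U\cap A\in\U$, and then $(U\cap A)(U\cap A)\subseteq AA=\{e\}$, so $\{e\}\in\U\U$. Since $\U\U$ is a filter containing the singleton $\{e\}$, it must equal the principal ultrafilter $\U_e$; that is, $\U\U=e$ inside $\Fil(X)$. This is the analogue of the relation $\U\U=\U$ used in Lemma~\ref{l:chain}, and it is exactly what makes the subsemigroup generated by $X\cup\{\U\}$ small and tractable.

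Next I would describe $Y$, the smallest subsemigroup of $\Fil(X)$ containing $X\cup\{\U\}$. Using $A\subseteq Z(X)$ (so $\U$ commutes with every $\U_x$, as $A$ lies in the center) together with $\U\U=e\in X$, every word in the generators collapses: products involving two or more copies of $\U$ reduce via $\U\U=e$ back into $X$, so the only genuinely new elements are $\U$ itself and the products $x\U=\U x$ for $x\in X^1$. Hence $Y\subseteq X\cup\{x\U:x\in X^1\}\subseteq\beta(X)$. I must check closure under multiplication: a product like $(x\U)(z\U)=xz\,\U\U=xze\in X$ and $(x\U)z=xz\,\U$ all stay inside $Y$, confirming $Y$ is a subsemigroup. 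Since $Y\subseteq\beta(X)$ and $\beta(X)$ is compact, Hausdorff, and zero-dimensional, $Y$ is a Hausdorff zero-dimensional topological semigroup, and $X$ is dense in it (being dense in $\Fil(X)$, a fortiori in $Y$) and discrete. As $\U\in Y\setminus X$, the inclusion $X\subseteq Y$ is proper, so $X$ is not closed in $Y\in\Zero$, witnessing that $X$ is not $\Zero$-closed.

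The main obstacle I anticipate is the bookkeeping in identifying $Y$ with $X\cup\{x\U:x\in X^1\}$: one must verify that \emph{every} finite product of generators reduces to this form, which requires knowing that $\U$ is central (so copies of $\U$ can be gathered together) and that gathering two adjacent $\U$'s yields $e\in X$. The centrality of $\U$ needs a brief justification from $A\subseteq Z(X)$ — concretely, for $x\in X$ and $U\in\U$ one has $x(U\cap A)=(U\cap A)x$ as \emph{sets} since each element of $A$ commutes with $x$, giving $\U_x\U=\U\U_x$. Once these two facts are in place, the reduction of arbitrary words is a routine induction on word length, and the topological conclusion follows immediately from the properties of $\beta(X)$ recorded in Section~\ref{s:ultra}.
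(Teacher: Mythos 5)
Your proposal is correct and follows essentially the same route as the paper: take a free ultrafilter $\U$ containing $A$, observe that $\U\U$ is the principal ultrafilter at the single element of $AA$, and use centrality of $A$ to see that the subsemigroup of $\Fil(X)$ generated by $X\cup\{\U\}$ is $X\cup\{x\U:x\in X^1\}\subseteq\beta(X)$, a Hausdorff zero-dimensional topological semigroup in which $X$ is a proper dense discrete subsemigroup. The paper states these reductions more tersely, but the details you supply (the identification $\U\U=\U_e$, the word-collapsing argument, and the commutation $\U_x\U=\U\U_x$) are exactly the ones implicitly used there.
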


\begin{proof} Assume that for some infinite set $A\subseteq Z(X)$ the product $AA$ is a singleton. Choose any free ultrafilter $\U\in\beta(X)$ containing
the
set $A$ and observe that $\U\U$ is a principal ultrafilter
(containing the singleton $AA$). Then the subsemigroup $Y
\subseteq\Fil(X)$ generated by the set $X\cup\{\U\}$ is equal to
$X\cup\{x\U :x\in X^1\}$ and hence is contained in $\beta(X)$.
Consequently, $X$ is a non-closed subsemigroup of Hausdorff zero-dimensional topological semigroup
$Y$, which means that $X$ is not $\Zero$-closed.
\end{proof}



\begin{lemma}\label{l:period} The center $Z(X)$ of any
$\Zero$-closed semigroup $X$ is periodic.
\end{lemma}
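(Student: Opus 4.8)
The plan is to argue by contradiction, following the scheme of Lemma~\ref{l:chain}. Assume $Z(X)$ is not periodic and fix $x\in Z(X)$ no power of which is an idempotent. Then the monogenic subsemigroup $x^\IN$ is infinite and the powers $x^1,x^2,\dots$ are pairwise distinct, so $x^\IN$ is isomorphic to the additive semigroup $(\IN,+)$; moreover $x^\IN\subseteq Z(X)$ because powers of a central element are central. As in Lemma~\ref{l:chain}, it then suffices to produce a free ultrafilter $\U\in\beta(X)$ concentrated on $x^\IN$ with $\U\U=\U$: using the centrality of $x^\IN$, the subsemigroup $Y\subseteq\Fil(X)$ generated by $X\cup\{\U\}$ would collapse to $X\cup\{a\U:a\in X^1\}\subseteq\beta(X)$, which is a zero-dimensional Hausdorff topological semigroup in which $X$ is a dense discrete but non-closed (since $\U\in Y\setminus X$) subsemigroup, contradicting the $\Zero$-closedness of $X$.

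The heart of the argument is the construction of such an idempotent ultrafilter. By the classical Ellis--Numakura lemma, the compact right-topological semigroup $\beta(x^\IN)$ contains an idempotent ultrafilter $\U$ for the operation induced from the multiplication of $x^\IN$; this $\U$ is automatically free, since for principal ultrafilters $\U_a\cdot\U_a=\U_{a\cdot a}\ne\U_a$. Regarding $\U$ as a free ultrafilter on $X$ concentrated on $x^\IN$, I would verify that the filter product $\U\U$ computed in $\Fil(X)$ (generated by the sets $UV$ with $U,V\in\U$, which may be taken inside $x^\IN$) satisfies $\U\U=\U$. One inclusion is general: each generator $UV$ belongs to the ultrafilter product $\U\cdot\U=\U$, so every member of $\U\U$ lies in $\U$. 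The reverse inclusion is precisely the Galvin--Glazer lemma: idempotency of $\U$ guarantees that every $A\in\U$ contains a product $B\cdot B$ with $B\in\U$, whence $A$ is a superset of a generator of $\U\U$ and so $A\in\U\U$. Therefore $\U\U=\U$.

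With $\U\U=\U$ in hand, the remaining bookkeeping is identical to that in Lemma~\ref{l:chain}: the push-forwards $\U_a\cdot\U$ and $\U\cdot\U_a$ are ultrafilters, they coincide to a common value $a\U$ because $x^\IN\subseteq Z(X)$, and the products $(a\U)(b\U)=(ab)\U$, $\U_a\cdot(b\U)=(ab)\U$, $(a\U)\cdot\U=a\U$ all stay inside $X\cup\{a\U:a\in X^1\}$, so $Y$ is as described and lies in $\beta(X)$. I expect the main obstacle to be exactly the equality $\U\U=\U$ for the \emph{filter} product, i.e.\ the existence of a filter-idempotent free ultrafilter. The naive alternative, the cofinite filter $\F$ on $x^\IN$, also satisfies $\F\F=\F$, but the resulting semigroup $X\cup\{a\F:a\in X^1\}$ need not be Hausdorff, since distinct free filters $a\F$ and $b\F$ may fail to contain disjoint members; this is precisely why I route the argument through an idempotent ultrafilter, so that $Y\subseteq\beta(X)$ and zero-dimensional Hausdorffness comes for free.
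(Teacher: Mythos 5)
Your reduction of the problem to finding a free ultrafilter $\U$ on $x^\IN$ with $\U\U=\U$ \emph{for the filter product} frames the difficulty correctly, but no such ultrafilter exists, and the step you attribute to Galvin--Glazer is where the argument breaks. Galvin--Glazer, applied to an idempotent $\U$ of the compact right-topological semigroup $\beta(x^\IN)$, produces for each $A\in\U$ a sequence whose finite products \emph{of distinct terms} lie in $A$; it does \emph{not} produce a set $B\in\U$ with $BB\subseteq A$, and the exclusion of the diagonal products $bb$ is essential. Indeed, suppose every $A\in\U$ contained $BB$ for some $B\in\U$. Since $BB\supseteq\{b^2:b\in B\}$, we would get $q^{-1}(A)\supseteq B$ and hence $q^{-1}(A)\in\U$, where $q:x^\IN\to x^\IN$ is the squaring map $s\mapsto s^2$; thus $\U$ would be contained in, hence equal to, the pushforward ultrafilter $q(\U)$. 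But $q$ is fixed-point-free (the powers $x^n$ are pairwise distinct, so $x^n\ne x^{2n}$), and by the classical three-coloring lemma a fixed-point-free map admits a partition $x^\IN=A_0\cup A_1\cup A_2$ with $q(A_i)\cap A_i=\emptyset$ for each $i$; the ultrafilter $\U$ contains some $A_i$, and then $q(\U)=\U$ would force the disjoint set $q(A_i)$ to lie in $\U$ as well, a contradiction. So for \emph{every} free ultrafilter on $x^\IN$ the filter product $\U\U$ is strictly smaller than $\U$ (in particular it is not an ultrafilter), Ellis--Numakura notwithstanding, and your set $X\cup\{a\U:a\in X^1\}$ is not a subsemigroup of $\Fil(X)$. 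Nor can you sidestep this by multiplying in $\beta(X)$ with the Stone--\v{C}ech (ultrafilter) product: that operation is only right-topological, and joint continuity at $(\U,\U)$ in your $Y$, where $X$ is discrete and dense, would again demand sets $B,C\in\U$ with $BC\subseteq A$ for each $A\in\U$ --- the same impossible condition. Note also that the failure is absolute: no extra set-theoretic axioms (CH, strongly summable ultrafilters, etc.) can rescue it.

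This impossibility is precisely why the paper's proof of Lemma~\ref{l:period} does not use ultrafilters. It takes the free filter $\F$ with base $\{x^{n!\IN}:n\in\IN\}$, for which $\F\F=\F$ follows from the elementary inclusions $(n+1)!\IN\subseteq n!\IN+n!\IN\subseteq n!\IN$, and then pays the price you hoped to avoid: a hands-on verification that $Y=X\cup\{a\F:a\in X^1\}$ is zero-dimensional (hence, being $T_0$, Hausdorff). There, translates $a\F$ with $n\mapsto ax^n$ non-injective collapse to principal ultrafilters (Claim~\ref{cl:I}), while for injective translates one shows $aF\cap bF=\emptyset$ for suitable $F\in\F$, making the basic sets $\langle aF\rangle$ clopen. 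That Hausdorffness argument is the real content of the paper's proof; it cannot be obtained ``for free'' by routing through $\beta(X)$.
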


\begin{proof} Assuming that $Z(X)$ is not periodic, find $x\in
Z(X)$ such that the powers $x^n$,
$n\in\IN$, are pairwise distinct. On the set $X$ consider the free
filter $\F$ generated by the base consisting of the sets
$x^{n!\IN}=\{x^{n!k}:k\in\IN\}$, $n\in\IN$.

Taking into account that $(n+1)!\IN\subseteq n!\IN+n!\IN\subseteq
n!\IN$ for all $n\in\IN$, we conclude that    $\F\F=\F$, so $\F$
is
an idempotent of the semigroup $\Fil(X)$. Let $Y
=X\cup\{a\F:a\in X^1\}$ be the smallest subsemigroup of $\Fil(X)$
containing the set $X\cup\{\F\}$.  We endow $Y$ with the
subspace topology inherited from $\Fil(X)$. Then $Y$ is a
topological semigroup, containing $X$ as a proper dense discrete
subsemigroup. Since the space $\Fil(X)$ is $T_0$ it is sufficient to show that the space $Y$ is zero-dimensional, because zero-dimensional $T_0$ spaces are Hausdorff.

By $I$ denote the set of all elements $a\in X^1$ such that the
function $\IN\to X$, $n\mapsto ax^n$, is injective. It is clear
that for every $a\in I$ the filter $a\F$ is free and hence does
not
belong to the set $X\subset Y$ of principal ultrafilters.

\begin{claim}\label{cl:I} For any $a\in X\setminus I$ the filter
$a\F$ is principal.
\end{claim}

\begin{proof} By the definition of the set $I$, there are two
numbers $n,k\in\IN$ such that $ax^n=ax^{n+k}=ax^nx^k$ and hence
$ax^n=ax^nx^{ki}$ for all $i\in\IN$.
Find a number $j\in\IN$ such that $0\le kj-n<k$ and observe that
for every integer number $i>j$ we get
$ax^{ki}=ax^{kj-n}x^{n}x^{k(i-j)}=x^{kj-n}ax^{n}x^{k(i-j)}=x^{kj-n}ax^n=ax^{kj}$.
Consequently, for the set $F=\{x^{ki}:i>j\}\in\F$ the set
$aF=\{ax^{kj}\}$ is a singleton, which implies that
the filter $a\F$ is principal.
\end{proof}

\begin{claim} The topological semigroup $Y$ is
zero-dimensional.
\end{claim}

\begin{proof} We need to show that for any point $y\in Y$,
any neighborhood $O_y\subseteq Y$ of $y$ contains a clopen
neighborhood of $y$.
If $y\in X$, then $y$ is an isolated point of the space
$Y$ and $\{y\}$ is an open neighborhood of $y$, contained
in $O_y$. To show that the set $\{y\}$ is closed in $Y$ fix any $z\in Y\setminus \{y\}$.
Claim~\ref{cl:I} implies that any element of $Y$ is either a principal ultrafilter or a free filter on $X$. Anyway, there exists $T\in z$ such that $y\notin T$. It is easy to see that $\langle T\rangle$ is an open neighborhood of $z$ disjoint with $\{y\}$.


 Next, assume that $y\notin X$ and hence $y=a\F$ for some
$a\in X^1$. By Claim~\ref{cl:I}, $a\in I$. Find a set
$F=x^{k!\IN}\in\F$ such that $\langle aF\rangle\subseteq O_y$. We claim
that the basic open set $\langle aF\rangle$ is closed in $Y$. Given any point $t\in Y\setminus\langle aF\rangle$, we
should find a neighborhood $O_t\subseteq Y$, which is disjoint
with $\langle aF\rangle$. If $t\in X$, then the neighborhood
$O_t=\{t\}$ of $t$ is disjoint with $\langle aF\rangle$ and we are
done. So, we assume that $t\notin X$. In this case $t=b\F$ for
some $b\in I$, according to Claim~\ref{cl:I}.

We claim that $aF\cap bF=\emptyset$. To derive a contradiction,
assume that $aF\cap bF$ contains some common element
$ax^{k!n}=bx^{k!m}$ where $n,m\in\IN$. Then
$ax^{k!(n+i)}=bx^{k!(m+i)}$ for all $i\in\IN$ and hence the symmetric difference $aF\triangle bF=(aF\setminus bF)\cup (bF\setminus aF)\subseteq\{ax^{k!i}\}_{i\le n}\cup\{bx^{k!j}\}_{j\le m}$ is finite. Since the filter $t$ is free and $bF\in t$ we obtain that  $aF\in t$, which contradicts
the choice of $t$.

This contradiction shows that $aF\cap bF=\emptyset$ and hence
$\langle bF\rangle$ is a neighborhood of the filter $t$, disjoint
with the set $\langle aF\rangle$, which implies that $\langle
aF\rangle$ is clopen and the space $Y$ is
zero-dimensional.
\end{proof}

Therefore, $X$ is not $\Zero$-closed.
\end{proof}


\begin{lemma}\label{l:subgrp2} Assume that $X$ is a periodic $\Zero$-closed semigroup with $H(X)\subseteq Z(X)$. If $X$ contains an unbounded subgroup, then for some $e\in E(X)$ and $x\in X$ there exists an infinite set $A\subseteq x\cdot H_e$ such that $AA$ is a singleton.
\end{lemma}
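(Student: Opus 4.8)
The plan is to reduce the statement to a purely multiplicative \emph{collapse} inside $H_e$ and then to manufacture the element $x$ witnessing this collapse from the hypothesis that $X$ is $\Zero$-closed. First I would replace the given unbounded subgroup by a maximal one: every subgroup of $X$ lies in a maximal subgroup $H_e$ (with $e\in E(X)$), and an overgroup of an unbounded group is unbounded, so I may assume $H_e$ is unbounded. Since $H(X)\subseteq Z(X)$ the group $H_e$ is central, and since $X$ is periodic the only idempotent of $H_e$ is $e$; hence $H_e$ is an unbounded torsion abelian group, in particular infinite and containing elements of arbitrarily large order. It is worth noting already here that $\Zero$-closedness is indispensable: if $X$ equalled the group $H_e$ itself there would be no candidate for $x$ outside $H_e$ and no collapse could occur, but then $X$ is not $\Zero$-closed by Theorem~\ref{B}, so the hypotheses are vacuous in that extreme case. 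The collapsing element must therefore be supplied by the ambient semigroup.

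Next I would record the algebraic reformulation. As $H_e$ is central, $(xh)(xh')=x^2hh'$ for all $x\in X$ and $h,h'\in H_e$, so for $A=xS$ with $S\subseteq H_e$ we get $AA=x^2\cdot(SS)$. Introducing the subgroups $N=\{m\in H_e:x^2m=x^2e\}$ and $N'=\{m\in H_e:xm=xe\}$ of $H_e$ (routine checks give $N'\subseteq N$), the choice $S=N$ yields $SS=N$ and $x^2N=\{x^2e\}$, so $AA$ is a singleton automatically, while $xm_1=xm_2$ iff $m_1N'=m_2N'$, whence $|A|=|xN|=[N:N']$. Thus the lemma reduces to producing a single element $x\in X$ with $[N:N']=\infty$; equivalently, in the language of Lemma~\ref{l:AA}, a free filter $x\F$ (supported in $xH_e$) whose square $x^2\F$ is principal. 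This forces $x\notin H_e$, since inside the group left multiplication is cancellative and there $N=N'=\{e\}$.

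To produce such an $x$ I would argue by contradiction, assuming that for no $x\in X$ is there an infinite $A\subseteq xH_e$ with $AA$ a singleton. Choosing an increasing chain of finite subgroups $F_1\subsetneq F_2\subsetneq\cdots$ with infinite union $G\subseteq H_e$, I would form the free filter $\F$ on $X$ generated by the sets $G\setminus F_n$; a short sumset computation shows that $\F$ is \emph{idempotent}, $\F\F=\F$. Since the base of $\F$ lies in $Z(X)$, the set $Y=X\cup\{a\F:a\in X^1\}$ is a subsemigroup of $\Fil(X)$ containing $X$ as a proper dense discrete subsemigroup, exactly as in the proofs of Lemmas~\ref{l:AA} and~\ref{l:period}. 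The no-collapse assumption is what powers the separation argument of Lemma~\ref{l:period}: it lets me separate distinct free translates $a\F,b\F$ by disjoint basic clopen sets and rules out degenerate translates, so that $Y$ becomes zero-dimensional and Hausdorff. Then $X$ is a proper dense subsemigroup of a member of $\Zero$, contradicting $\Zero$-closedness, and this contradiction yields the required $x$ and hence $A$.

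I expect the main obstacle to be exactly the separation step, namely showing that under unboundedness the \emph{only} way $Y$ can fail to be zero-dimensional and Hausdorff is through a genuine collapse of the squared form. The danger is a pair of distinct free translates $a\F,b\F$ that persistently intersect yet produce no principal square; here the unboundedness of the orders occurring in the chain $(F_n)$ must be used to push the translates off every finite subgroup and force their eventual disjointness. That this phenomenon is real for bounded groups — where $\bigoplus_{n}\IZ(2)$ is $\Zero$-closed by Theorem~\ref{B} and admits no such collapse — shows that mere infiniteness of $H_e$ is insufficient and that the quantitative use of unbounded order is essential. A secondary, more routine, obstacle is verifying the idempotency of $\F$, which amounts to checking that sumsets of sets cofinite modulo finite subgroups of an abelian group are again of this type.
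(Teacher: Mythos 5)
Your opening reduction is sound: producing an element $x$ with $N=\{m\in H_e:x^2m=x^2e\}$ of infinite index over $N'=\{m\in H_e:xm=xe\}$ does yield the required set $A$, and this mirrors the role of the subgroups $G_a=\{h\in H_e:ah=ae\}$ in the paper's argument (Claim~\ref{cl:quot}). But the construction you propose for the contradiction has a fatal computational error. The filter $\F$ generated by the sets $G\setminus F_n$ is \emph{not} idempotent: for any finite subgroups $F_n,F_m$ of the infinite group $G$ and any $h\in G$, all but finitely many $g\in G$ satisfy $g\notin F_n$ and $g^{-1}h\notin F_m$, so $(G\setminus F_n)(G\setminus F_m)=G$. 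Hence $\F\F$ is the filter of supersets of $G$, which is strictly coarser than $\F$, and $Y=X\cup\{a\F:a\in X^1\}$ is not even a subsemigroup of $\Fil(X)$. The contrast with Lemma~\ref{l:period} is instructive: there the base sets $x^{n!\IN}$ satisfy $x^{(n+1)!\IN}\subseteq x^{n!\IN}\cdot x^{n!\IN}\subseteq x^{n!\IN}$, which is what makes that filter idempotent; complements of finite subgroups have the opposite behaviour, since $U U^{-1}$ recaptures the identity.

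Beyond this, the ``separation step'' you defer to the end is not a technical afterthought but the entire content of the lemma, and no repair within $\Fil(X)$ is apparent. The paper abandons the filter semigroup altogether here. Its route is: (i) the no-collapse hypothesis forces every quotient $H_e/G_a$ (for $a$ in a suitable set $T$ of elements lying below $e$) to be \emph{bounded}; (ii) unboundedness of $H_e$ yields, via Baer's extension theorem, a homomorphism $h:H_e\to\IQ_\infty$ with infinite (hence dense in $\IT$) image; (iii) embedding $H_e$ into $\IT^\Phi$ by all characters and using compactness, one finds an element $s$ of \emph{infinite order} lying in $\bigcap_F\bar G_F$ — this is exactly where (i) and (ii) combine, since boundedness of $H_e/G_F$ makes $h[G_F]$ dense in $\IT$; (iv) the Michael modification of the subspace topology makes $H_e$ discrete while keeping $s$ in its closure, and the resulting zero-dimensional semigroup $Y=S\sqcup(X\setminus H_e)$ contradicts $\Zero$-closedness. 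Your proposal contains none of steps (ii)--(iv), and step (i) appears only implicitly in your reduction; as written, the argument does not close.
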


\begin{proof} To derive a contradiction, assume that $X$ contains an unbounded subgroup but for any $e\in E(X)$, $x\in X$ and an infinite set $A\subseteq x\cdot H_e$ the set $AA$ is not a singleton.

Since $E(X)\subseteq H(X)\subseteq Z(X)$, the set of idempotents $E(X)$ is a semilattice. Let $\pi:X\to E(X)$ be the map assigning to each $x\in X$ the unique idempotent of the monogenic semigroup $x^\IN$. By Proposition~\ref{p:pi-homo}, $\pi$ is a homomorphism.

Since $X$ contains an unbounded subgroup, for some idempotent $e\in E(X)$ the maximal subgroup $H_e$ containing $e$ is unbounded.
By Lemma~\ref{l:chain}, the semilattice $E(X)$ is chain-finite.
Consequently, we can find an
idempotent $e$ whose maximal group $H_e$ is unbounded but for
every idempotent $f<e$ the group $H_{f}$ is bounded.


In the semigroup $X$, consider the set $$T=\textstyle\bigcup\big\{\!\korin{\infty}{H_f}:f\in E(X),\;fe<e\big\}.$$ 

\begin{claim}\label{cl:quot} For every $a\in T$, the set $G_a=\{x\in H_e:ax=ae\}$ is a subgroup of $H_e$ such that the quotient group $H_e/G_a$ is bounded.
\end{claim}

\begin{proof} Observe that for any $x,y\in G_a$ we have $axy=aey=ay=ae$, which means that $G_a$ is a subsemigroup of the group $H_e$. Since the group $H_e$ is periodic, the subsemigroup $G_a$ is a subgroup of $H_e$. It remains to prove that the quotient group $H_e/G_a$ is bounded. To derive a contradiction, assume that $H_e/G_a$ is unbounded.

Let $f=\pi(a)$. It follows from $a\in T$ that $fe<e$. Now the minimality of $e$ ensures that the group $H_{fe}$ is bounded. Then there exists $p\in\IN$ such that $x^p=fe$ for any $x\in H_{fe}$.

\begin{claim}\label{cl:xhp} For every $x\in H_{fe}$ and $h\in H_e$ we have $xh^p=x$.
\end{claim}

\begin{proof} By Proposition~\ref{p:pi-homo}, $\pi(feh)=fe\pi(h)=fee=fe$ and by Lemma~\ref{l:C-ideal},
$$feh=feh\cdot fe=feh\cdot\pi(feh)\in H_{\pi(feh)}=H_{fe}.$$ Then $(feh)^p=fe$ and
$xh^p=(xfe)h^p=x(feh)^p=xfe=x.$
\end{proof}

In the group $H_e$ consider the subgroup $G=\{h^p:h\in H_e\}$. By Proposition~\ref{p:pi-homo}, $\pi(ae)=\pi(a)\pi(e)=fe$ and hence $(ae)^n\in H_{fe}$ for some $n\in\IN$. Claim~\ref{cl:xhp} ensures that $a^nG=a^n(e^nG)=(ae)^nG=\{(ae)^n\}=\{a^ne\}$ is a singleton and hence $G\subseteq G_{a^n}$. Let $k\le n$ be the smallest number such that the subgroup $G\cap G_{a^k}$ has finite index in $G$. We claim that $k\ne 1$. Assuming that $G\cap G_a$ has finite index in $G$, we conclude that the quotient group $G/(G\cap G_a)$ is finite and hence bounded. Since the quotient group $H_e/G$ is bounded, the quotient group $H_e/(G\cap G_a)$ is bounded and so is the quotient group $H_e/G_a$. But this contradicts our assumption. This contradiction shows that $k\ne 1$. The minimality of $k$ ensures that the subgroup $G\cap G_{a^{k-1}}$ has infinite index in $G$. Since the group $G\cap G_{a^k}$ has finite index in $G$, the subgroup $G\cap G_{a^{k-1}}$ has infinite index in the group $G\cap G_{a^{k}}$. So, we can find an infinite set $I\subseteq G\cap G_{a^{k}}$ such that $x(G\cap G_{a^{k-1}})\cap y(G\cap G_{a^{k-1}})=\emptyset$ for any distinct elements $x,y\in I$. Observe that for any distinct elements $x,y\in I$ we have $a^kx=a^ke=a^ky$ and $a^{k-1}x\ne a^{k-1}y$ (assuming that $a^{k-1}x=a^{k-1}y$, we obtain that $a^{k-1}e=a^{k-1}xx^{-1}=a^{k-1}yx^{-1}$ and hence $yx^{-1}\in G\cap G_{a^{k-1}}$ which contradicts the choice of the set $I$).

Then the set $A=a^{k-1}I$ is infinite. We claim that $AA$ is a singleton. Indeed, for any $x,y\in I$ we have $a^{k-1}xa^{k-1}y=a^kxa^{k-2}y=a^kea^{k-2}y=a^{k-2}ea^ky=e^{k-2}ea^ke=a^{2k-2}e$. Therefore, $AA=\{a^{2k-2}e\}$. But the existence of such set $A$ contradicts our assumption.
\end{proof}

Let $\IQ_\infty=\{z\in\IC:\exists n\in\IN\;(z^n=1)\}$ be the
quasi-cyclic group, considered as a dense subgroup of the compact
Hausdorff group $\IT=\{z\in\IC:|z|=1\}$.

\begin{claim}\label{cl:inf} There exists a homomorphism $h:H_e\to
\IQ_\infty$ whose image $h[H_e]$ is infinite.
\end{claim}

\begin{proof}  By Lemma~\ref{l:period}, the group $H_e$ is
periodic, so for every $x\in H_e$ we can choose the smallest
number
$p(x)\in\IN$ such that $x^{p(x)}=e$. Since $H_e$ is unbounded,
there is a sequence $(x_n)_{n\in\IN}$ of elements such that
$p(x_n)>n\cdot\prod_{k<n}p(x_k)$ for all $n\in\IN$.

For every $n\in\IN$ let $G_n$ be the subgroup of $H_e$, generated
by the elements $x_1,\dots,x_n$. Let $G_0=\{e\}$ be the trivial
group and $h_0:G_0\to\{1\}\subset\IQ_\infty$ be the unique
homomorphism.  By induction, for every $n\in\IN$ we shall
construct
a homomorphism $h_n:G_n\to\IQ_\infty$ such that
$h_n{\restriction}G_{n-1}=h_{n-1}$ and $|h_n[G_n]|>n$. Assume that for some
$n\in\IN$ the homomorphism $h_{n-1}$ has been constructed. Consider the
cyclic subgroup $x_n^\IN$ generated by the
element $x_n$.
Consider the subgroup $Z=x_n^\IN\cap G_{n-1}\subseteq x_n^\IN$ and let
$\varphi:x_n^\IN\to\IQ_{\infty}$ be a homomorphism such that
$\varphi{\restriction}Z=h_{n-1}{\restriction}Z$ and $\varphi^{-1}(1)=(h_{n-1}{\restriction}Z)^{-1}(1)$.

Define the homomorphism $h_n:G_n\to\IQ_\infty$ by the formula
$h_n(cx)=\varphi(c)h_{n-1}(x)$ where $c\in x_n^\IN$ and $x\in
G_{n-1}$. To see that that $h_n$ is well-defined, take any
elements
$c,d\in x_n^\IN$ and $x,y\in G_{n-1}$ with $cx=dy$ and observe that
$d^{-1}c=yx^{-1}\in x_n^\IN\cap G_{n-1}$ and hence
$\varphi(d^{-1}c)=h_{n-1}(yx^{-1})$, which implies the desired
equality $\varphi(c)h_{n-1}(x)=\varphi(d)h_{n-1}(x)$. So, the
homomorphism $h_n$ is well-defined. It is clear that
$h_n{\restriction}G_{n-1}=h_{n-1}$ and the image $h_n[G_n]$ has cardinality
$$|h_n[G_n]|\ge
|h_n[x_n^\IN]|=|\varphi[x_n^\IN]|\ge|x_n^\IN/Z|=\frac{|x_n^\IN|}{|Z|}\ge
\frac{|x_n^\IN|}{|G_{n-1}|}\ge
\frac{p(x_n)}{\prod_{k<n}p(x_k)}>n.$$
After completing the inductive construction, consider the subgroup
$G=\bigcup_{n=1}^\infty G_n\subseteq H_e$ and the homomorphism
$h:G\to \IQ_\infty$ defined by $h{\restriction}G_n=h_n$ for all $n\in\IN$.

Taking into account that $|h[G]|\ge |h_n[G_n]|>n$ for all
$n\in\IN$, we conclude that the image $h[G]$ is infinite. By a
classical result of Baer \cite[21.1]{Fuchs}, the homomorphism $h$
can be extended to a homomorphism $\tilde h:H_e\to\IQ_\infty$. It
is clear that the image $\tilde h[H_e]$ is infinite.
\end{proof}

Denote by $\Phi$ the set of all homomorphisms from $H_e$ to
$\IQ_\infty$. By the classical Baer Theorem \cite[21.1]{Fuchs} on
extending homomorphisms into divisible groups, the homomorphisms
into $\IQ_\infty$ separate points of $H_e$, which implies that the
homomorphism $\vec\varphi:H_e\to\IQ_\infty^{\Phi}$,
$\vec\varphi:x\mapsto (\varphi(x))_{\varphi\in\Phi}$, is
injective.
Identify the group $H_e$ with its image $\vec\varphi[H_e]\subseteq
\IQ_\infty^\Phi$ in the compact topological group $\IT^{\Phi}$ and
let $\bar H_e$ be the closure of $H_e$ in $\IT^{\Phi}$.

By Claim~\ref{cl:inf}, the family $\Phi$ contains a homomorphism
$h:H_e\to\IQ_\infty$ with infinite image $h[H_e]$. The subgroup
$h[H_e]$, being infinite, is dense in $\IT$. The homomorphism $h$
admits a continuous extension $\bar h:\bar H_e\to\IT$, $\bar
h:(z_\varphi)_{\varphi\in\Phi}\mapsto z_h$.
The compactness of $\bar H_e$ and density of $h[H_e]=\bar h[
H_e]$ in $\IT$ imply that $\bar h[\bar H_e]=\IT$.

By Claim~\ref{cl:quot}, for every $a\in T$ the quotient group $H_e/G_a$ is bounded. So, we can find a
number $n_a\in\IN$ such that $x^{n_a}\in G_a$ for all $x\in H_e$.  Moreover, for any
non-empty finite set $F\subseteq T$ and the number
$n_F=\prod_{a\in
F}n_a\in\IN$, the intersection $G_F=\bigcap_{a\in F}G_a$ contains
the $n_F$-th power $x^{n_F}$ of any element $x\in H_e$.

Then for every $y\in h[H_e]\subseteq \IQ_\infty$, we get $y^{n_F}\in
h[G_{F}]$, which implies that the subgroup $h[G_{F}]$ is dense in
$\IT$. Let $\bar G_F$ be the closure of $G_F$ in the compact
topological group $\bar H_e$.  The density of the subgroup
$h[G_F]$
in $\IT$ implies that $\bar h[\bar G_F]=\overline{h[G_F]}=\IT$.

 By the compactness, $\bar h[\bigcap_{F\in [T]^{<\w}}\bar
G_{F}]=\bigcap_{F\in [T]^{<\w}}\bar h[\bar G_F]=\IT$. So, we can
fix an element $s\in \bigcap_{F\in [T]^{<\w}}\bar G_F\subseteq\bar
H_e$ whose image $\bar h(s)\in\IT$ has infinite order in the group
$\IT$. Then $s$ also has infinite order and its orbit
$s^\IN$ is disjoint with the periodic group $H_e$.

Consider the subsemigroup $S\subseteq\bar H_e$ generated by
$H_e\cup\{s\}$. Observe that $S\subseteq
\prod_{\varphi\in\Phi}\IQ_\varphi$ where $\IQ_\varphi$ is the
countable subgroup of $\IT$ generated by the set
$\IQ_\infty\cup\{\varphi(s)\}$.

It is clear that the subspace topology $\tilde\tau$ on $S$, inherited from the topological group
$\prod_{\varphi\in\Phi}\IQ_\varphi$ is Tychonoff and
zero-dimensional.
Then the topology $\tau'$ on $S$ generated by the base
$\{U\cap a\bar G_F:U\in\tilde\tau,\;a\in
\bar H_e,\;F\in[T]^{<\w}\}$ is also zero-dimensional.
It is easy to see that $(S,\tau')$ is a
topological semigroup and $s$  belongs to the closure of $H_e$ in
the topology $\tau'$. Finally, endow $S$ with the
topology
$\tau=\{U\cup D:U\in\tau',\;D\subseteq H_e\}$. The topology $\tau$
is
well-known in General Topology as the Michael modification of the
topology $\tau'$ (see \cite[5.1.22]{Eng}). Since the (group)
topology $\tau'$ is zero-dimensional, so is its Michael
modification $\tau$ (see \cite[5.1.22]{Eng}). Using the fact that
$S\setminus H_e$ is an ideal in $S$, it can be
shown that $(S,\tau)$ is a zero-dimensional  topological
semigroup, containing $H_e$ as a dense discrete subgroup. From now
on we consider $S$ as a topological semigroup, endowed
with the topology $\tau$.

Now consider the topological sum $Y=S\sqcup
(X\setminus H_e)$ of the topological space $S$ and the
discrete topological space $X\setminus H_e$. It is clear that
$Y$ contains $X$ as a proper dense discrete subspace.

It remains to extend the semigroup operation of $X$ to a
continuous
commutative semigroup operation on $Y$. In fact, for any
$a\in X$, $b\in H_e$ and $n\in\IN$ we should define the product
$a(bs^n)$. By the periodicity of the semigroup $X$, there is a
number $p\in\IN$ such that $f:=a^p$ is an idempotent. If $fe<e$,
then we put $a(bs^n)=ab$. If $fe=e$, then the element $ae$ has
power $(ae)^p=a^pe^p=fe=e$ and hence $ae$ belongs to the semigroup $\korin{\infty}{H_e}$. By Lemma~\ref{l:C-ideal}, the subgroup $H_e$ is an ideal in $\korin{\infty}{H_e}$. Consequently, $ae=aee\in H_e$. So, we can put
$a(bs^n)=(aeb)s^n$. The choice of
$x\in\bigcap_{F\in[T]^{<\w}}\bar G_F$ guarantees that the
extended
binary operation is continuous. Now the density of $X$ in $Y$ implies that the extended operation is commutative and
associative. Since $Y\in\mathsf{T_{\!z}S}$, the semigroup $X$ is not $\mathsf{T_{\!z}S}$-closed, which is a desired contradiction completing the proof of Lemma~\ref{l:subgrp2}.
\end{proof}

\begin{lemma}\label{l:subgrp} If a $\mathsf{T_{\!z}S}$-closed periodic semigroup $X$ has $X\cdot H(X)\subseteq Z(X)$, then each subgroup of $X$ is bounded.
\end{lemma}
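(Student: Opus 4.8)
The plan is to argue by contradiction and funnel everything into the two structural facts about $\Zero$-closed semigroups established just before, namely Lemma~\ref{l:subgrp2} and Lemma~\ref{l:AA}. The first move is to extract from the hypothesis $X\cdot H(X)\subseteq Z(X)$ the cleaner containment $H(X)\subseteq Z(X)$. Indeed, any $h\in H(X)$ lies in some maximal subgroup $H_e$ with identity $e$, so $h=he$; since $h\in X$ and $e\in H_e\subseteq H(X)$, the product $he$ belongs to $X\cdot H(X)\subseteq Z(X)$, whence $h=he\in Z(X)$. This is the only place the precise form of the hypothesis is used, and it is exactly what is needed to bring the machinery of the previous section to bear: with $H(X)\subseteq Z(X)$ we may invoke Proposition~\ref{p:pi-homo} and, crucially, Lemma~\ref{l:subgrp2}.

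Next I would suppose toward a contradiction that $X$ contains an unbounded subgroup. The semigroup $X$ is periodic by assumption and $\Zero$-closed (recall that being $\mathsf{T_{\!z}S}$-closed is the same as being $\Zero$-closed), and we have just verified $H(X)\subseteq Z(X)$, so the hypotheses of Lemma~\ref{l:subgrp2} are met. That lemma then produces an idempotent $e\in E(X)$, an element $x\in X$, and an infinite set $A\subseteq x\cdot H_e$ for which the product $AA$ is a singleton.

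It remains to observe that such a set $A$ is automatically central: since $H_e\subseteq H(X)$, we have $A\subseteq x\cdot H_e\subseteq X\cdot H(X)\subseteq Z(X)$. Thus $A$ is an infinite subset of $Z(X)$ whose square $AA$ is a singleton, which contradicts Lemma~\ref{l:AA}. Hence $X$ can contain no unbounded subgroup, i.e.\ every subgroup of $X$ is bounded, as claimed.

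I do not expect a genuine obstacle here; the argument is a short chain of inclusions feeding into two ready-made lemmas. The only step demanding a moment's attention is the opening reduction $X\cdot H(X)\subseteq Z(X)\Rightarrow H(X)\subseteq Z(X)$, which relies on the fact that each element of $H(X)$ is fixed on the right by the identity of its own maximal subgroup. Everything afterwards is a matter of confirming that the periodicity and $\Zero$-closedness hypotheses of Lemmas~\ref{l:subgrp2} and~\ref{l:AA} hold and that the infinite set delivered by the former indeed lands in the center.
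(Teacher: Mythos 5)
Your proof is correct and follows essentially the same route as the paper: assume an unbounded subgroup, invoke Lemma~\ref{l:subgrp2} to produce an infinite $A\subseteq x\cdot H_e\subseteq X\cdot H(X)\subseteq Z(X)$ with $AA$ a singleton, and contradict Lemma~\ref{l:AA}. Your explicit verification that $X\cdot H(X)\subseteq Z(X)$ implies $H(X)\subseteq Z(X)$ (via $h=he$) is a detail the paper leaves implicit, and it is a worthwhile addition since Lemma~\ref{l:subgrp2} formally requires that hypothesis.
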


\begin{proof} Assuming that $X$ contains an unbounded subgroup, we can apply Lemma~\ref{l:subgrp2} and find elements $e\in E(X)$, $x\in X$, and an infinite subset $A\subseteq x\cdot H_e$ such that the set $AA$ is a singleton. Since $A\subseteq X\cdot H(X)\subseteq Z(X)$, we can apply Lemma~\ref{l:AA} and conclude that the semigroup $X$ is not $\mathsf{T_{\!z}S}$-closed. But this contradicts our assumption.
\end{proof}

\begin{lemma}\label{l:power-periodic} Let $X$ be a $\mathsf{T_{\!z}S}$-closed semigroup and $e\in E(X)$ be an idempotent such that the semigroup $H_e\cap Z(X)$ is bounded. Then for any sequence  $(x_n)_{n\in\w}$ in $(\korin{\infty}{\,e}\cap Z(X))\setminus H_e$ there exists $n\in\w$ such that $x_n\notin \{x_{n+1}^p:p\ge 2\}$.
\end{lemma}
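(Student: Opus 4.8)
The plan is to prove the statement by contradiction through Lemma~\ref{l:AA}. Assume that $x_n\in\{x_{n+1}^p:p\ge 2\}$ for \emph{every} $n\in\w$; I will construct an infinite subset $A\subseteq Z(X)$ with $AA$ a singleton, which by Lemma~\ref{l:AA} contradicts the $\Zero$-closedness of $X$.

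First I would record the algebraic structure forced by the assumption. Write $x_n=x_{n+1}^{p_n}$ with $p_n\ge 2$ and put $\Pi_m=\prod_{i<m}p_i$, so that $x_m=x_M^{\Pi_M/\Pi_m}$ whenever $m\le M$. Since $x_n\in\korin{\infty}{e}$, the monogenic semigroup $\langle x_n\rangle$ has $e$ as its only idempotent; let $d_n\ge 1$ be its index, characterised by $x_n^{\,j}\in H_e\Leftrightarrow j\ge d_n$. As $x_n\notin H_e$ we have $d_n\ge 2$. Applying this characterisation to $x_m^{d_m}=x_M^{(\Pi_M/\Pi_m)d_m}\in H_e$ gives the key inequality $\tfrac{\Pi_M}{\Pi_m}\,d_m\ge d_M$ for $m\le M$, while applying it to $x_m^{d_m-1}=x_{m+1}^{p_m(d_m-1)}\notin H_e$ gives $d_{m+1}>p_m(d_m-1)\ge 2(d_m-1)$, whence $d_m\to\infty$.

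Next I would descend to the Clifford part. By Lemma~\ref{l:C-ideal} the element $g_m:=x_me$ lies in $H_e$, and since $x_m,e\in Z(X)$ it lies in the bounded group $G:=H_e\cap Z(X)$; moreover $g_m=(x_{m+1}e)^{p_m}=g_{m+1}^{p_m}$, so the cyclic subgroups $\langle g_m\rangle$ form an increasing chain in $G$. Their orders are bounded (by the exponent of $G$), so the non-decreasing sequence of these orders stabilises, and since the subgroups are nested the chain itself stabilises: there are $M_0\in\w$ and $r\in\IN$ such that $\langle g_m\rangle=\langle g_{M_0}\rangle$ has order $r$ for all $m\ge M_0$. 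This stabilisation is the one place where the boundedness hypothesis on $H_e\cap Z(X)$ is essential, as it is what makes the relevant ``group parts'' range over only finitely many values.

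Finally I would assemble $A:=\{x_m^{\,i}:m\ge M_0,\ \lceil d_m/2\rceil\le i<d_m,\ r\mid i\}$. Every such $x_m^{\,i}$ lies in $Z(X)\setminus H_e$, and since $d_m\to\infty$ a single large $m$ already contributes arbitrarily many distinct powers, so $A$ is infinite. Given $a=x_m^{\,i}$ and $a'=x_M^{\,i'}$ in $A$ with $m\le M$, the inequality $\tfrac{\Pi_M}{\Pi_m}d_m\ge d_M$ together with $i\ge d_m/2$ and $i'\ge d_M/2$ forces the exponent of $aa'=x_M^{(\Pi_M/\Pi_m)i+i'}$ to be at least $d_M$, so $aa'\in H_e$ and therefore $aa'=aa'e=(ae)(a'e)$ by centrality. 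As $r\mid i$ and $g_m$ has order $r$, we get $ae=(x_me)^i=g_m^{\,i}=e$, and likewise $a'e=e$, so $aa'=e$. Hence $AA=\{e\}$ is a singleton and Lemma~\ref{l:AA} yields the desired contradiction. The main obstacle is the index combinatorics of the middle two paragraphs: one must extract the clean inequalities $\tfrac{\Pi_M}{\Pi_m}d_m\ge d_M$ and $d_m\to\infty$ and then combine them with the finiteness of the stabilised group $\langle g_{M_0}\rangle$ so that the ``upper-half'' products are \emph{simultaneously} pushed into $H_e$ and pinned to the single value $e$.
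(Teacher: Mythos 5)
Your proof is correct and follows essentially the same strategy as the paper: assuming each $x_n$ is a proper power of $x_{n+1}$, you build an infinite set $A\subseteq Z(X)$ of powers lying in the ``upper half'' of the range below the $H_e$-threshold, so that pairwise products fall into $H_e$ and are annihilated by the exponent of the bounded group $H_e\cap Z(X)$, and then invoke Lemma~\ref{l:AA}; the only difference is presentational, since the paper encodes the same index bookkeeping via a homomorphism from the rational semigroup $Q_+=\{k/(p_1\cdots p_n)\}$ and takes $W=n_eQ_+\cap(\tfrac{s}{2},s)$ where you work directly with the thresholds $d_m$. The one micro-step you assert without proof --- that the powers $x_m^i$, $i<d_m$, are pairwise distinct --- is exactly the injectivity fact the paper extracts from Theorem~1.9 of Clifford--Preston (if $x^i=x^j$ with $i<j$ then $x^i$ lies in the kernel of the monogenic semigroup and hence in $H_e$), so it is easily repaired and does not constitute a gap.
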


\begin{proof}
To derive a contradiction assume that there exists a sequence $(x_n)_{n\in\w}$ in $(\korin{\infty}{\,e}\cap Z(X))\setminus H_e$ such that for every $n\in\IN$ there exists $p_n\ge 2$ such that $x_{n-1}=x_{n}^{p_n}$. Since the semigroup $H_e\cap Z(X)$ is bounded, there exists $n_e\in\mathbb{N}$ such that $z^{n_e}=e$ for every $z\in H_e\cap Z(X)$.

Consider the additive subsemigroup $Q_+=\big\{\frac{k}{p_1\cdots
p_n}:k,n\in\IN\big\}$ of the semigroup of positive rational
numbers
endowed with the binary operation of addition of rational numbers.
Let $h:Q_+\to \korin{\infty}{\,e}\,\cap Z(X)$ be the (unique) homomorphism such that
$h(\frac{1}{p_1\cdots p_n})=x_n$ for all $n\in\IN$. Then $h(1)=h(\frac{p_1}{p_1})=x_1^{p_1}=x_0\notin H_e$. By Lemma~\ref{l:period}, the center $Z(X)$ of the $\mathsf{T_{\!z}S}$-closed semigroup $X$ is periodic and hence $H_e\cap Z(X)$ is a periodic subgroup of $\korin{\infty}{\,e}\cap Z(X)$. By
Lemma~\ref{l:C-ideal}, the subgroup $H_e\cap Z(X)$ is an ideal in
$\korin{\infty}{\,e}\,\cap Z(X)$. Consequently, the preimage $h^{-1}[H_e]=h^{-1}[H_e\cap Z(X)]$ is an upper set in
$Q_+$, which means that for any points $q<r$ in $Q_+$ with $q\in
h^{-1}[H_e]$ we get $r\in h^{-1}[H_e]$. Then
$L=h^{-1}[\korin{\infty}{\,e}\setminus
H_e]$ is a lower set, which  contains $1$ and hence contains the
interval $Q_+\cap(0,1]$. We claim that the restriction $h{\restriction}L$
injective. Assuming that $h(a)=h(b)$ for some distinct points
$a<b$
in $L$, we can find natural numbers $k$ and $n<m$ such that
$a=\frac{n}{p_1\cdots p_k}$ and $b=\frac{m}{p_1\dots p_k}$. Then
$x^{n}_k=h(a)=h(b)=x^{m}_k$. By Theorem 1.9 from~\cite{Clifford-Preston-1961}, $x^n_k\in H_e$ and, therefore,
$a=\frac{n}{p_1\cdots p_k}\in h^{-1}[H_e]$. But this contradicts
the choice of $a\in L\subseteq Q_+\setminus h^{-1}[H_e]$.

Let $s=\sup L\in(0,+\infty)$ and $W=\{q\in
n_eQ_+:\frac{s}2<q<s\}\subseteq L$. The injectivity of $h{\restriction}L$
guarantees that the set $h[W]$ is infinite. Observe that for every
points $a,b\in W$ we get $a+b>2\frac{s}2=s$ and hence $h(a+b)\in
H_e\cap Z(X)$ and thus $h(a+b)=h(a+b)e$.  Find $z\in Q_+$ such that $a+b=n_ez$. Then $h(a+b)=h(n_ez)e=h(z)^{n_e}e=(h(z)e)^{n_e}=e$ by the choice of $n_e$ and the inclusion $h(z)e\in \korin{\infty}{H_e}\cdot H_e\subseteq H_e$ (see Lemma~\ref{l:C-ideal}). This implies that the infinite set $A=h[W]\subseteq Z(X)$ has $AA=\{e\}$. Applying Lemma~\ref{l:AA}, we conclude that the semigroup $X$ is not $\mathsf{T_{\!z}S}$-closed which contradicts our assumption.
\end{proof}

\section{Proof of Theorem~\ref{t:main}}\label{s:main}

We should prove that a commutative semigroup $X$ is $\C$-closed if and only if $X$ is periodic, chain-finite, all subgroups of $S$ are bounded and for every infinite set $A\subseteq X$ the set $AA$ is not a singleton.

The ``only if'' part follows from   Lemmas~\ref{l:chain}, \ref{l:AA}, \ref{l:period} and \ref{l:subgrp}. To prove the ``if'' part, assume that $X$ is periodic, chain-finite, all subgroups of $S$ are bounded and for every infinite set $A\subseteq X$ the set $AA$ is not a singleton. By the periodicity, $X=\bigcup_{e\in E(X)}\korin{\infty}{\,e}$. Consider the map  $\pi:X\to E(X)$ assigning to each $x\in X$ the unique idempotent in the monogenic semigroup $x^\IN$. By Proposition~\ref{p:pi-homo}, the map $\pi$ is a semigroup homomorphism. By Lemma~\ref{l:single}, for every idempotent $e\in E(X)$ the semigroup $\korin{\infty}{\,e}$ is $\C$-closed. Since $X$ is chain-finite, so is the semilattice $E(X)$. Applying Lemma~\ref{l:cool}, we conclude that the semigroup $X$ is $\C$-closed.

\section{$\C$-closedness of quotient semigroups}\label{s:quotient}

In this section we prove some lemmas that will be used in the proof of Theorem~\ref{t:mainP}.

\begin{lemma}\label{l:quotient} Let $X$ be a periodic semigroup, $e\in E(X)$ and $Z_n:=\{z\in  Z(X):z^n\in H_e\}$ for $n\in\IN$. If for some $\ell\in\IN$ the set $Z_\ell\setminus H_e$ is infinite, then there exist a finite set $F\subseteq Z_\ell$ and an infinite set $A\subseteq Z_\ell\setminus FX^1$ such that $AA\subseteq F\cup H_e\subseteq FX^1$.
\end{lemma}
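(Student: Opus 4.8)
The plan is to reduce the statement to a purely combinatorial fact about commutative nil semigroups and then lift the resulting configuration back to $X$.

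First I would fix the algebraic frame. Since $Z_\ell\setminus H_e$ is infinite, $Z_\ell\ne\emptyset$, so for any $z\in Z_\ell$ we have $z^\ell\in H_e\cap Z(X)=:G$, whence $G\ne\emptyset$. As $H_e$ is a periodic group (a maximal subgroup of the periodic semigroup $X$) and $G$ is a nonempty subsemigroup of it, $G$ is in fact a subgroup of $H_e$; its identity is an idempotent of $H_e$ and therefore equals $e$, so $e\in Z(X)$ and $e\in Z_\ell$. Using Lemma~\ref{l:C-ideal} (which gives $\korin{\infty}{H_e}\cdot H_e\subseteq H_e$) one checks that $G$ is an ideal of the commutative semigroup $Z_\ell$. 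Hence the Rees quotient $T:=Z_\ell/G$ is a commutative semigroup with zero $0=[G]$, whose nonzero elements are exactly the points of $Z_\ell\setminus H_e$, and in which $\bar z^{\,\ell}=\overline{z^\ell}=0$ for every $\bar z$. Thus $T$ is a commutative nil semigroup of index at most $\ell$ with infinitely many nonzero elements, and everything reduces to finding an infinite set $B$ of nonzero elements of $T$ with finite product set $B\cdot B$.

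The heart of the argument is the index-two case $x^2=0$ for all $x\in T$, where I would use two observations. \textbf{(Null ideal.)} If some $c\ne 0$ with $c^2=0$ has $cT$ infinite, then $B:=cT\setminus\{0\}$ works, since $(cx)(cy)=c^2xy=0$ gives $B\cdot B=\{0\}$. \textbf{(Thin construction.)} Otherwise every such $c$ is \emph{thin}, i.e. $cT$ is finite, and then by nested pigeonholing I can choose $x_1,x_2,\dots$ with $x_ix_j=v_i$ depending only on $\min\{i,j\}$ for $i\ne j$. Since $x_i^2=0$, a short computation gives $v_ix_j=x_{i+1}(x_ix_j)=x_ix_{i+1}^2=0$ for $j>i$, hence $v_iv_j=0$ for all $i<j$ (and $v_i^2=0$). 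Consequently, if $\{v_i\}$ is infinite then $B=\{v_i\}$ has $B\cdot B=\{0\}$, while if $\{v_i\}$ is finite then $B=\{x_i\}$ has $B\cdot B\subseteq\{0\}\cup\{v_i\}$ finite. To descend from arbitrary index to this case I would induct on $\ell$ via the squaring endomorphism $q\colon x\mapsto x^2$ (a homomorphism by commutativity), whose image $q[T]$ has index at most $\lceil\ell/2\rceil<\ell$: if $q[T]$ has infinitely many nonzero elements the inductive hypothesis applied to $q[T]\subseteq T$ finishes, and otherwise $\{x^2:x\in T\}$ is finite, so pigeonholing on the value of $x^2$ together with passage to the Rees quotient by the ideal generated by those finitely many squares reaches an index-two quotient. \emph{I expect the genuine obstacle to lie exactly here}: handling elements $c$ that are thick ($cT$ infinite) but have $c^2\ne0$, for which the null-ideal trick does not apply directly. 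The remedy I would pursue is to replace such $c$ by its top nonzero power $\hat c=c^{r}$ (with $c^{r+1}=0$), which satisfies $\hat c^{\,2}=0$ and feeds the problematic elements back into the index-two machinery; making the thin/thick bookkeeping uniform is the delicate step.

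Finally I would lift and assemble the conclusion. Choosing representatives of an infinite $B\subseteq T\setminus\{0\}$ with $B\cdot B$ finite yields an infinite $A_0\subseteq Z_\ell\setminus H_e$ with $A_0A_0\subseteq H_e\cup F_0$ for a finite $F_0\subseteq Z_\ell\setminus H_e$ (the lifts of the nonzero products). Put $F:=F_0\cup\{e\}\subseteq Z_\ell$. Then $A_0A_0\subseteq F\cup H_e$, and $H_e\subseteq eX\subseteq FX^1$ because $e$ is the identity of $H_e$, so $F\cup H_e\subseteq FX^1$. For the requirement $A\subseteq Z_\ell\setminus FX^1$: no $a\in Z_\ell\setminus H_e$ lies in $eX^1$, since $a\in eX^1$ would force $a=ea\in H_e$ by Lemma~\ref{l:C-ideal}, contradicting $a\notin H_e$; and each of the finitely many sets $fX^1$ with $f\in F_0$ meets $A_0$ in only finitely many points because the relevant factors are thin. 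Deleting these finitely many points from $A_0$ leaves the desired infinite set $A\subseteq Z_\ell\setminus FX^1$ with $AA\subseteq F\cup H_e\subseteq FX^1$.
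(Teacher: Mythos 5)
Your reduction to the Rees quotient $T=Z_\ell/G$ with $G=H_e\cap Z(X)$ is sound as far as it goes, and the index-two combinatorics (the null-ideal trick and the nested-pigeonhole construction of $x_i x_j=v_{\min\{i,j\}}$) is correct. But there are two genuine gaps. The first is the one you flag yourself: the descent from nil index $\ell$ to index $2$ does not close. If $\{x^2:x\in T\}$ is finite and you pass to the Rees quotient by the ideal $J$ it generates, then a set $B$ with $B\cdot B=\{0\}$ in $T/J$ only gives $A_0A_0\subseteq J\cup H_e$ upstairs, and $J$ (being $SZ_\ell^1$ for the finite set $S$ of nonzero squares) is typically infinite. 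The lemma needs $AA\subseteq F\cup H_e$ with $F$ \emph{finite}, which is strictly stronger than $AA\subseteq FX^1$; Rees quotients by infinite ideals destroy exactly the information you must keep. (Also, $T/J$ may itself be finite, in which case the induction has nowhere to go.)

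The second gap is in the lifting step, and it is not repairable from inside $T$: you must arrange $A\cap FX^1=\emptyset$, where $X^1$ is the \emph{whole} ambient semigroup, while ``thinness'' of $f$ only says that $f\cdot Z_\ell$ is finite modulo $G$. The set $fX^1\cap(Z_\ell\setminus H_e)$ can a priori be infinite even when $fT$ is finite, because $x$ ranges over all of $X$, not over $Z_\ell$. The paper's proof is built precisely to handle this: it takes $\ell$ \emph{minimal} with $Z_\ell\setminus H_e$ infinite (so $Z_{\ell-1}\setminus H_e$ is finite), proves that $z^2\in Z_{\ell-1}$ for $z\in Z_\ell$ and, crucially, that $Z_\infty\cap Z_nX^1\subseteq Z_n$ (Claim~\ref{cl:6.3}); the latter shows that elements of $Z_\ell\setminus Z_{\ell-1}$ automatically avoid $Z_{\ell-1}X^1$, so any finite $F\subseteq(Z_{\ell-1}\setminus H_e)\cup\{e\}$ is safe, and it forces all products into $Z_{\ell-1}$, which is finite modulo $H_e$. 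The remaining case analysis (whether some $(Z_\ell\setminus Z_{\ell-1})\cap zX^1$ is infinite, then a Ramsey argument on triples) replaces your thin/thick bookkeeping. Without the minimality of $\ell$, the stratum $Z_{\ell-1}$, and an analogue of Claim~\ref{cl:6.3}, the sentence ``each $fX^1$ meets $A_0$ in only finitely many points because the relevant factors are thin'' is an unproved assertion, and it is where your argument fails.
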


\begin{proof} Lemma~\ref{l:C-ideal} implies that $Z_n\subseteq Z_{n+1}$ for all $n\in\w$. Let $Z_\infty=\bigcup_{n\in\IN}Z_n=Z(X)\cap\korin{\infty}{H_e}$. By our assumption, there exists a number $\ell\in\IN$ such that the set $Z_\ell\setminus H_e$ is infinite. We can assume that $\ell$ is the smallest number with this property. The obvious equality $Z_1= Z(X)\cap H_e$ implies that $\ell\ge 2$ and hence the set $Z_{\ell-1}\setminus H_e$ is finite by the minimality of $\ell$.

Choose any sequence $(z_n)_{n\in\w}$ of pairwise distinct elements of the infinite set $Z_\ell\setminus  Z_{\ell-1}$.

\begin{claim}\label{cl:6.2} For every $z\in Z_\ell$ we have $z^2\in Z_{\ell-1}$.
\end{claim}

\begin{proof} In the case $\ell=2$, by the definition of $\ell$, we get that $z^2\in Z(X)\cap H_e=Z_1$. Assuming $\ell> 2$, we have that $$(z^2)^{\ell-1}=z^{2\ell-2}=z^\ell z^{\ell-2}\in H_e\cdot \korin{\infty}{H_e}\subseteq H_e,$$as $H_e$ is an ideal in $\korin{\infty}{H_e}$ by Lemma~\ref{l:C-ideal}. Then $z^2\in Z_{\ell-1}$ according to the definition of $Z_{\ell-1}$.
\end{proof}

\begin{claim}\label{cl:6.3} For every $n\in\IN$ we have $Z_\infty\cap Z_nX^1\subseteq Z_n$.
\end{claim}

\begin{proof} For any $z\in Z_n$ and $x\in X^1$, with $zx\in Z_\infty$, we should prove that $(zx)^n\in H_e$. The inclusion $z\in Z_n\subseteq Z(X)$ implies $z^n\in H_e\cap Z(X)$ and $\pi(z)=e\in Z(X)$.  By Claim~\ref{cl:pi-Z}, $e=\pi(zx)=\pi(z)\pi(x)=e\pi(x)$ and $\pi(ex^n)=\pi(e)\pi(x)=e$ and thus $ex^n\in\korin{\infty}{H_e}$. Then $$(zx)^n=z^nx^n=(z^ne)x^n=z^n(ex^n)\in H_e \cdot\korin{\infty}{H_e}\subseteq H_e.$$
\end{proof}

If for some $z\in Z_\ell$, the set $A=(Z_\ell\setminus Z_{\ell-1})\cap (zX^1)$ is infinite, then for the finite set $F=(Z_{\ell-1}\setminus H_e)\cup\{e\}$ we have
$$A\cap FX^1\subseteq (Z_\ell\setminus Z_{\ell-1})\cap Z_{\ell-1}X^1=\emptyset$$ by Claim~\ref{cl:6.3}. On the other hand, applying Claims~\ref{cl:6.2} and \ref{cl:6.3}, we obtain
$$AA\subseteq Z_{\ell}\cap z^2X^1\subseteq Z_\ell\cap Z_{\ell-1}X^1\subseteq Z_{\ell-1}\subseteq F\cup H_e\subseteq FX^1.$$ Therefore, the finite set $F$ and the infinite set $A$ have the properties required in Lemma~\ref{l:quotient}.

So, we assume that for every $z\in Z_\ell$, the set $(Z_\ell\setminus Z_{\ell-1})\cap (zX^1)$ is finite.

Let $T=\{\langle i,j,k\rangle\in \w\times\w\times\w:i<j<k\}$ and $\chi:T\to \{0,1,2\}$ be the function defined by the formula
$$
\chi(i,j,k)=\begin{cases}
0&\mbox{if $z_iz_j\in Z_{\ell-1}$};\\
1&\mbox{if $z_iz_j\notin Z_{\ell-1}$ and $z_iz_j\ne z_iz_k$};\\
2&\mbox{if $z_iz_j\notin Z_{\ell-1}$ and $z_iz_j=z_iz_k$}.
\end{cases}
$$
By the Ramsey Theorem 5 in \cite{Ramsey}, there exists an infinite set $\Omega\subseteq\w$ such that $\chi[T\cap\Omega^3]=\{c\}$ for some $c\in\{0,1,2\}$.

If $c=0$, then the infinite set $A=\{z_n:n\in\Omega\}$ has $AA\subseteq Z_{\ell-1}\subseteq F\cup H_e\subseteq FX^1$ for the finite set $F=(Z_{\ell-1}\setminus H_e)\cup\{e\}$. On the other hand, $$A\cap FX^1\subseteq (Z_\ell\setminus Z_{\ell-1})\cap (Z_{\ell-1} X^1)=\emptyset$$by Claim~\ref{cl:6.3}.

If $c=1$, then for any $i\in \Omega$ the set $\{z_iz_j:i<j\in\Omega\}$ is an infinite subset of the set $z_i Z_\ell\setminus Z_{\ell-1}\subseteq (Z_\ell\setminus Z_{\ell-1})\cap (z_iX^1)$, which is finite by our assumption. Therefore, the case $c=1$ is impossible.

If $c=2$, then $z_iz_j=z_iz_k\notin Z_{\ell-1}$ for any numbers $i<j<k$ in $\Omega$. Then for every $i<k$ in $\Omega$ we have $z_iz_k=z_iz_{i^+}$ where $i^+=\min\{j\in \Omega:i<j\}$.
\smallskip

Now consider two cases.
\smallskip


1) The set $A=\{z_iz_{i^+}:i\in \Omega\}\subseteq Z_\ell\setminus Z_{\ell-1}$ is infinite.
Observe that for any numbers $i<j$ in $\Omega$, applying Claims~\ref{cl:6.2} and \ref{cl:6.3}, we obtain
$$z_iz_{i^+}z_jz_{j^+}=z_iz_jz_jz_{j^+}\in z_j^2 Z_\ell\subseteq Z_{\ell-1}Z_\ell\subseteq
Z_{\ell-1}$$ and $z_iz_{i^+}z_iz_{i^+}\in z_i^2Z_\ell\subseteq Z_{\ell-1}Z_\ell\subseteq Z_{\ell-1}$. Then $AA\subseteq Z_{\ell-1}\subseteq F\cup H_e$ for the finite set $F=(Z_{\ell-1}\setminus H_e)\cup\{e\}$. Also $A\cap FX^1\subseteq (Z_\ell\setminus Z_{\ell-1})\cap Z_{\ell-1}X^1=\emptyset$ by Claim~\ref{cl:6.3}.
\smallskip

2) The set $C=\{z_iz_{i^+}:i\in\Omega\}$ is finite. In this case there exists an element $c\in C$ such that the set $\Lambda=\{i\in\Omega:c=z_iz_{i^+}\}$ is infinite. By our assumption, the set $(Z_\ell\setminus Z_{\ell-1})\cap cX^1$ is finite. Then, taking into account Claim~\ref{cl:6.2}, the infinite set $A=\{z_i:i\in\Lambda\}\setminus cX^1$ satisfies
$$AA=\{z_i^2:i\in\Lambda\}\cup\{z_iz_j:i,j\in\Lambda,\;i<j\}\subseteq Z_{\ell-1}\cup \{z_iz_{i^+}:i\in\Lambda\}=Z_{\ell-1}\cup\{c\}\subseteq F\cup H_e\subseteq FX^1$$for the finite set $F=(Z_{\ell-1}\setminus H_e)\cup\{c,e\}$. Also $$A\cap FX^1\subseteq ((Z_\ell\setminus Z_{\ell-1})\cap (Z_{\ell-1}X^1))\cup (A\cap cX^1)=\emptyset.$$

In all cases we have constructed an infinite set $A\subseteq Z_\ell$ and a finite set $F\subseteq Z_\ell$ such that $AA\subseteq F\cup H_e\subseteq FX^1$ and $A\cap FX^1=\emptyset$.
\end{proof}

\begin{remark}\label{lapochka}
The finite set $F$ from the previous lemma can be forced to have at most two elements. For this we need to use one more time the Ramsey Theorem. Let $A$ be the infinite set constructed in the previous lemma. Recall that $AA\subseteq F\cup H_e$ and $A\cap FX^1=\emptyset$. Write the set $F\cup\{e\}$ as $\{f_0,f_1,\dots,f_n\}$ where $f_0=e$.  The Pigeonhole Principle implies that there exists $k\leq n$ and an infinite subset $B\subseteq A$ such that $\{x^2:x\in B\}\subseteq H_e$ if $k=0$ and $\{x^2:x\in B\}=\{f_k\}$ if $k>0$. By $[B]^2$ we denote the set of all two-element subsets of $B$. Consider the function $\chi: [B]^2\rightarrow \{0,\ldots,n\}$ defined by the formula:
$$
\chi(\{a,b\})=
\begin{cases}
0&\mbox{if $ab\in H_e$};\\
i&\mbox{if $ab=f_i$ for some $i\in\{1,\dots,n\}$.}
\end{cases}
$$
By the Ramsey Theorem, there exist a number $i\in\{0,\dots,n\}$ and an infinite subset $A'\subseteq B$ such that $\chi(\{x,y\})=i$ for any distinct elements $x,y\in A'$. Then for the set $F'=\{f_k,f_i\}$ we have $A'A'\subseteq F'\cup H_e$ and $A'\cap F'X^1\subseteq A\cap FX^1=\emptyset$.
\end{remark}

\begin{lemma}\label{l:powers} Let $X$ be an ideally $\mathsf{T_{\!z}S}$-closed semigroup such that for some $e\in E(X)\cap Z(X)$ the semigroup $H_e\cap Z(X)$ is bounded. Then the set $(\korin{\infty}{H_e}\cap Z(X))\setminus H_e$ is finite.
\end{lemma}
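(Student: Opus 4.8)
The plan is to assume the set $D:=(\korin{\infty}{H_e}\cap Z(X))\setminus H_e$ is infinite and to contradict the ideal $\mathsf{T_{\!z}S}$-closedness of $X$. Since $X$ is $\mathsf{T_{\!z}S}$-closed, $Z(X)$ is periodic by Lemma~\ref{l:period}; and as $H_e\cap Z(X)$ is bounded, every $z\in\korin{\infty}{H_e}\cap Z(X)$ satisfies $z^m=e$ for some $m$, so $\korin{\infty}{H_e}\cap Z(X)=\korin{\infty}{\,e}\cap Z(X)$, which is exactly the domain of Lemma~\ref{l:power-periodic}. Put $Z_n=\{z\in Z(X):z^n\in H_e\}$. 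By Lemma~\ref{l:C-ideal} the sets $Z_n$ increase and $z^n\in H_e$ forces $z^m\in H_e$ for all $m\ge n$, so each $z\in D$ has a depth $d(z):=\min\{n:z^n\in H_e\}\ge 2$, and $Z_\ell\setminus H_e=\{z\in D:d(z)\le\ell\}$.

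The heart of the proof is to show that some single level $Z_\ell\setminus H_e$ is infinite. A short computation gives $d(x^2)=\lceil d(x)/2\rceil$, so the squaring map $s(x)=x^2$ sends every $x\in D$ with $d(x)\ge 3$ to an element of $D$ of strictly smaller depth, with each fibre $s^{-1}(y)\subseteq Z_{2d(y)}\setminus H_e$. Suppose, for contradiction, that every $Z_\ell\setminus H_e$ is finite. Then $s$ endows the infinite set $D$ with a forest structure: the roots are the finitely many depth-$2$ elements, each vertex of depth $\ge 3$ has the unique parent $s(x)$ of smaller depth, and every vertex has finitely many children, while iterating $s$ carries each vertex down to a root in finitely many steps. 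By K\"onig's lemma this forest has an infinite branch $y_0,y_1,\dots$ with $y_{n+1}^2=y_n$, i.e. a sequence in $D$ with $y_n\in\{y_{n+1}^p:p\ge 2\}$ for all $n$, contradicting Lemma~\ref{l:power-periodic}. Hence some $Z_\ell\setminus H_e$ is infinite.

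Fixing $\ell$ with $Z_\ell\setminus H_e$ infinite, I would invoke Lemma~\ref{l:quotient} together with Remark~\ref{lapochka} to obtain a finite set $F\subseteq Z_\ell$ with $e\in F$ and an infinite set $A\subseteq(Z_\ell\setminus H_e)\subseteq Z(X)$ such that $AA\subseteq F\cup H_e$ and $A\cap FX^1=\emptyset$. Since $F\subseteq Z(X)$ is central and $e\in F$, the set $I:=FX^1$ is a two-sided ideal of $X$ that contains $F$ and, via $H_e=eH_e\subseteq eX^1$, the whole group $H_e$; consequently $AA\subseteq I$ while $A\cap I=\emptyset$. The quotient $q:X\to X/I$ is $\mathsf{T_{\!z}S}$-closed because $X$ is ideally $\mathsf{T_{\!z}S}$-closed, and $q[A]$ is an infinite subset of $Z(X/I)$ with $q[A]\,q[A]=q[AA]=\{q[I]\}$ a singleton, contradicting Lemma~\ref{l:AA}. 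This contradiction shows that $D$ is finite, as required.

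The point requiring the most care, and which I expect to be the main obstacle, is that Lemma~\ref{l:quotient} is stated for periodic semigroups whereas here $X$ need not be periodic. Its combinatorial argument only multiplies central elements of $Z_\ell$, and the sole place where periodicity of the ambient $X$ enters is the inclusion $\korin{\infty}{H_e}\cap Z(X)\cap Z_nX^1\subseteq Z_n$ (the analogue of Claim~\ref{cl:6.3}), which is precisely the fact behind the disjointness $A\cap FX^1=\emptyset$. I would re-prove it directly: for $z\in Z_n$ and $x\in X^1$ with $zx\in\korin{\infty}{H_e}\cap Z(X)$, centrality of $z$ gives $(zx)^n=z^nx^n$ with $z^n\in H_e\cap Z(X)$; writing $(zx)^m=e$ and inverting $z^m$ in the bounded group $H_e\cap Z(X)$ (using $e\in Z(X)$) yields $ex\in\korin{\infty}{H_e}$, whence $(zx)^n=z^n(ex)^n\in H_e\cdot\korin{\infty}{H_e}\subseteq H_e$ by Lemma~\ref{l:C-ideal}, so $zx\in Z_n$. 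With this inclusion, and noting that all products occurring in Lemma~\ref{l:quotient} stay within $Z(X)$, the remainder of that argument transfers verbatim to the non-periodic $X$.
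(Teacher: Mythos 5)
Your proof is correct and follows the same overall strategy as the paper's: assume $(\korin{\infty}{H_e}\cap Z(X))\setminus H_e$ is infinite, split into the case where some level $Z_\ell\setminus H_e$ is infinite (handled via Lemma~\ref{l:quotient}, the ideal $FX^1$ and Lemma~\ref{l:AA}) and the case where all levels are finite (handled via K\"onig's lemma and Lemma~\ref{l:power-periodic}); you merely take the two cases in the opposite order. Two places where your write-up genuinely improves on the paper's are worth recording. First, your K\"onig argument is leaner: you build the finitely-branching forest directly on the set $D$ via the squaring map and the depth function, using $d(x^2)=\lceil d(x)/2\rceil$, whereas the paper first passes to the auxiliary subsemigroup $P=\{z^p:z\in Z_\infty\}$ and proves two extra claims ($P\cap H_e=\{e\}$ and $P\setminus Z_n\ne\emptyset$) in order to root its tree at $e$; both routes feed the resulting infinite branch of square roots into Lemma~\ref{l:power-periodic}. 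Second, and more substantively, you correctly flag that Lemma~\ref{l:quotient} is stated for \emph{periodic} semigroups, while Lemma~\ref{l:powers} does not assume $X$ periodic (only $Z(X)$ is, by Lemma~\ref{l:period}); the paper invokes Lemma~\ref{l:quotient} here without comment. Your direct re-proof of the inclusion $Z_\infty\cap Z_nX^1\subseteq Z_n$ --- using only centrality of $z$, boundedness of the group $H_e\cap Z(X)$ to invert $z^m$ and extract $ex\in\korin{\infty}{H_e}$, and Lemma~\ref{l:C-ideal} --- is correct, and it is indeed the only step of Lemma~\ref{l:quotient} whose proof uses the homomorphism $\pi$ and hence periodicity of the ambient semigroup; the rest of that lemma is Ramsey-theoretic and only multiplies central elements. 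So your patch makes the argument valid at the stated level of generality.
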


\begin{proof} To derive a contradiction, assume that the set $(\korin{\infty}{H_e}\cap Z(X))\setminus H_e$ is infinite.
For every $n\in\IN$ consider the set
$$Z_n=\{z\in Z(X):z^n\in H_e\}$$
and let $Z_\infty=\bigcup_{n\in\IN}Z_n=Z(X)\cap\korin{\infty}{H_e}$.

If for some $\ell\in\IN$ the set $Z_\ell\setminus H_e$ is infinite, then we can apply Lemma~\ref{l:quotient} and find a finite set $F\subset Z_\ell\subseteq Z(X)$ and infinite set $A\subseteq Z_\ell$ such that $AA\subseteq F\cup H_e\subseteq FX^1$ and $A\cap (FX^1)=\emptyset$. Consider the ideal $I=FX^1$ and the quotient semigroup $X/I$. Then the quotient image $q[A]$ of $A$ in $X/I$ is an infinite set in $Z(X/I)$ such that $q[A]q[A]=q[I]$ is a singleton. By Lemma~\ref{l:AA}, the semigroup $X/I$ is not $\mathsf{T_{\!z}S}$-closed, which contradicts our assumption. This contradiction shows that for every $n\in\IN$ the set $Z_n\setminus H_e$ is finite.

Since the semigroup $Z(X)\cap H_e$ is bounded, there exists $p\in\IN$ such that $x^p=e$ for all $x\in Z(X)\cap H_e$. Consider the subsemigroup $P=\{z^p:z\in Z_\infty\}$ in $Z_\infty$.

\begin{claim} $P\cap H_e=\{e\}$.
\end{claim}

\begin{proof}
 Given any element $x\in P\cap H_e$, find $z\in Z_\infty$ such that $x=z^p$. Lemma~\ref{l:C-ideal} implies that $ze\in \korin{\infty}{\,e}\cdot e\subseteq H_e$ and hence  $(ze)^p=e$ by the choice of $p$. Then $x=xe=z^pe=(ze)^p=e$.
\end{proof}

\begin{claim} For every $n\in\w$ the set $P\setminus Z_n$ is not empty.
\end{claim}

\begin{proof} Assuming that $P\setminus Z_n=\emptyset$, we conclude that $P\subseteq Z_n$ and hence $Z_\infty\subseteq Z_{pn}$. Then the set $Z_\infty\setminus H_e\subseteq Z_{pn}\setminus H_e$ is finite, which contradicts our assumption.
\end{proof}

Consider the tree $$T=\bigcup_{n\in\w}\big\{(t_k)_{k\in n}\in P^n:t_0=e\;\wedge\;\big(\forall k\in n\setminus\{0\}\;\;(t(k)\in Z_{2^k}\setminus H_e\;\wedge\;t(k)^2=t(k-1))\big)\big\}$$endowed with the partial order of inclusion of functions. Since the sets $Z_n\setminus H_e$ are finite, the tree $T$ has finitely many branching points at every vertex.  On the other hand, this tree has infinite height. This follows from the fact that for every element $z\in P\setminus Z_{2^k}$, there exists $n>k$ such that $z^{2^n}\in P\cap H_e=\{e\}$ but $z^{2^{n-1}}\notin H_e$. Then the sequence $(z^{2^{n-i}})_{i\in k}$ belongs to the tree $T$. By K\H onig's Lemma 5.7~\cite{Kun}, the tree $T$ has an infinite branch which is a sequence $(z_n)_{n\in\w}$ in $P$ such that  $z_0=e$ and $z_n^2=z_{n-1}$, $z_n\in Z_{2^n}\setminus H_e$, for all $n\in\IN$.
But the existence of such a sequence contradicts Lemma~\ref{l:power-periodic}.
\end{proof}

\section{Proof of Theorem~\ref{t:mainP}}\label{s:mainP}

Given a commutative semigroup $X$, we should prove the equivalence of the following conditions:
\begin{enumerate}
\item $X$ is projectively $\C$-closed;
\item $X$ is ideally $\C$-closed;
\item $X$ is chain-finite, almost Clifford, and all subgroups are bounded.
\end{enumerate}

The implication $(1)\Ra(2)$ is trivial.
\smallskip

$(2)\Ra(3)$ Assume that $X$ is ideally $\C$-closed. Then $X$ is $\C$-closed and by Lemmas~\ref{l:chain}, \ref{l:period}, \ref{l:subgrp}, $X=Z(X)$ is chain-finite, periodic, and all subgroups of $X$ are bounded. It remains to prove that $X$ is almost Clifford. By Lemma~\ref{l:powers}, for every $e\in E(X)$ the set $\korin{\infty}{H_e}\setminus H_e$ is finite. Assuming that the semigroup $X$ is not almost Clifford, we conclude that the set $B=\{e\in E(X):\korin{\infty}{H_e}\ne H_e\}$ is infinite. Since the semilattice $E(X)$ is chain-finite, we can apply the Ramsey Theorem and find an infinite antichain $C\subseteq B$ (the latter means that $xy\notin\{x,y\}$ for any distinct elements $x,y\in C$). Let $R=\{e\in E(X): \exists c\in C\;\;(e<c)\}$. 
 It is straightforward to check that $R$ is an ideal in $E(X)$. Lemma~\ref{l:C-ideal} and Proposition~\ref{p:pi-homo} imply that $J=\bigcup_{e\in R}\korin{\infty}{H_e}$ is an ideal in $X$.
Let us show that the set $I=J\cup\bigcup_{e\in C}H_e$ is an ideal in $X$. Indeed, fix any $x\in X$ and $y\in I$. If $y\in J$, then $xy\in J\subseteq I$, because $J$ is an ideal in $X$. Assume that $y\in H_e$ for some $e\in C$. Then Proposition~\ref{p:pi-homo} implies that either $xy\in \korin{\infty}{H_f}\subseteq I$ for some idempotent $f<e$, or $xy\in \korin{\infty}{H_e}$. In the latter case, since $H_e$ is an ideal in $\korin{\infty}{H_e}$, we get that $xy=xye\in H_e\subseteq I$. So, $I$ is an ideal in $X$.
Since $X$ is ideally $\C$-closed, the quotient semigroup $X/I$ is $\C$-closed. By the choice of the set $C$, for every $e\in C$ the set $\korin{\infty}{H_e}\setminus H_e$ is not empty. Let us show that the set $\korin{\infty}{H_e}\setminus H_e$ contains an element $a_e$ such that $a_e^2\in H_e$. Pick any $b\in \korin{\infty}{H_e}\setminus H_e$. If $b^2\in H_e$, then put $a_e=b$. Otherwise, let $n$ be the smallest integer such that $b^n\in H_e$, which exists by the periodicity of $X$. Note that $n>2$ and $b^{n-1}\in  \korin{\infty}{H_e}\setminus H_e$. Put $a_e=b^{n-1}$ and observe that $$a_e^2=(b^{n-1})^2=b^{2n-2}=b^nb^{n-2}\in H_eb^{n-2}\subseteq H_e.$$ We claim that for the set $A=\{a_e:e\in C\}\subseteq X\setminus I$ we have $AA\subseteq I$.
Since the set $C\subseteq E(X)$ is an antichain, for every distinct $e,f\in C$ we have $ef<e$ and hence $ef\in R\subset I$. Then Proposition~\ref{p:pi-homo} implies that for every distinct $e,f\in C$, $a_e a_f\in \korin{\infty}{H_{e}}\korin{\infty}{H_{f}}\subseteq \korin{\infty}{H_{ef}}\subseteq I$. Hence $AA\subseteq I$.
Moreover,  the image $q[A]$ of $A$ in the quotient semigroup $X/I$ is an infinite subset of $X/I$ such that $q[A]q[A]=q(I)$ is a singleton, which contradicts Lemma~\ref{l:AA}.
\smallskip

$(3)\Ra(1)$ Assume that $X$ is chain-finite, almost Clifford and all subgroups of $X$ are bounded. Let us show that $X$ is periodic. Elements of $H(X)$ are periodic, since maximal subgroups are bounded. If $x\in X\setminus H(X)$, then the finiteness of $X\setminus H(X)$ and the fact that $E(X)\subseteq H(X)$ imply that for some $n\in\mathbb N$ the element $x^n\in H(X)$. Consequently $x^{nm}=e\in E(X)$ for some large enough $m$, because maximal subgroups are bounded.
By Theorem~\ref{t:main}, the projective $\C$-closedness of $X$ will be proved as soon as we check that for any congruence $\approx$ on $S$ the quotient semigroup $Y=X/_\approx$ is periodic, chain-finite, all subgroups of $X/_\approx$ are bounded and for any infinite subset $A\subseteq X/_\approx$ the set $AA$ is not a singleton.

Let $q:X\to Y=X/_\approx$ be the quotient homomorphism. The periodicity of $X$ implies the periodicity of $Y$. To see that $Y$ is chain-finite, observe that for every $e\in E(Y)$ the (periodic) semigroup $q^{-1}(e)$ contains an idempotent. This implies that $E(Y)=q[E(X)]$. Since $X$ is chain-finite, its maximal semilattice $E(X)$ is chain-finite. By Corollary~\ref{BBT1}, $E(X)$ is projectively $\C$-closed and then so is its homomorphic image $E(Y)$. Using Corollary~\ref{BBT1} one more time, we obtain that the semilattice $E(Y)$ is chain-finite. The following claim implies that the semigroup $Y$ is almost Clifford and all subgroups of $Y$ are bounded.

\begin{claim} For any idempotent $e\in E(Y)$ there exists an idempotent $s\in E(X)$ such that $q[H_s]=H_e$.
\end{claim}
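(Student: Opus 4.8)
The plan is to take $s$ to be the \emph{least} idempotent of $X$ lying over $e$ and to verify that this $s$ satisfies $q[H_s]=H_e$. Since $Y=X/_\approx$ is commutative and periodic, Proposition~\ref{p:pi-homo} applies to $Y$, giving a homomorphism $\pi_Y\colon Y\to E(Y)$ sending each $y$ to the unique idempotent of $y^\IN$. I would first record the naturality $\pi_Y\circ q=q\circ\pi$: for $x\in X$ the element $q(\pi(x))$ is an idempotent of $q(x)^\IN=q(x^\IN)$, so by uniqueness $q(\pi(x))=\pi_Y(q(x))$. Next, using the already established equality $E(Y)=q[E(X)]$, the set $S_e:=E(X)\cap q^{-1}(e)$ is nonempty, and being a subsemilattice of the chain-finite semilattice $E(X)$ it is chain-finite and hence has a least element $s:=\inf S_e\in S_e$.

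The inclusion $q[H_s]\subseteq H_e$ is the easy half: $q[H_s]$ is a subgroup of $Y$ containing $q(s)=e$, so by maximality of $H_e$ it is contained in $H_e$.

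For the reverse inclusion $H_e\subseteq q[H_s]$ I would fix $b\in H_e$ and pick any $x\in X$ with $q(x)=b$. Putting $f=\pi(x)$, naturality gives $q(f)=\pi_Y(b)=e$, so $f\in S_e$ and thus $f\ge s$. Then the element $xs$ satisfies $\pi(xs)=\pi(x)\pi(s)=fs=s$, so $xs\in\korin{\infty}{\,s}$, while $q(xs)=q(x)q(s)=be=b$. Because the monogenic semigroup $(xs)^\IN$ is finite with unique idempotent $s$, all sufficiently high powers of $xs$ lie in its cyclic group part, a subgroup of $X$ with identity $s$ and hence contained in $H_s$: there is $k_1\in\IN$ with $(xs)^k\in H_s$ for all $k\ge k_1$. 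Since $Y$ is periodic and $\pi_Y(b)=e$, the element $b$ has finite order $n$ in $H_e$ (that is, $b^n=e$). Choosing $k\ge k_1$ with $\gcd(k,n)=1$ yields $b^k=q((xs)^k)\in q[H_s]$, and as $q[H_s]$ is a subgroup while $\gcd(k,n)=1$ forces $b\in\langle b^k\rangle$, we conclude $b\in q[H_s]$. Combining the two inclusions gives $q[H_s]=H_e$.

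The main obstacle is precisely this reverse inclusion: a naive lift of a group element $b\in H_e$ need not land in the group $H_s$, but only in $\korin{\infty}{\,s}$. The device that overcomes it is to pass to a high power $(xs)^k$, which is automatically pushed into $H_s$, and then to recover $b$ itself from $b^k$ using the coprimality of $k$ with the order of $b$; the periodicity of $X$ (and of $Y$) is what makes both the ``high powers enter $H_s$'' step and the ``$b$ has finite order'' step available. Finally, taking $s=\inf S_e$ is essential, since it guarantees that \emph{every} lift $x$ of a point of $H_e$ has $\pi(x)\ge s$, so that $xs$ invariably has $\pi$-value exactly $s$; a larger idempotent over $e$ would in general fail to receive all lifts.
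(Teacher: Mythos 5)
Your proof is correct and follows the same overall strategy as the paper: both take $s$ to be the least idempotent of the chain-finite semilattice $E(X)\cap q^{-1}(e)$, both obtain $q[H_s]\subseteq H_e$ from the maximality of $H_e$, and both use the minimality of $s$ to see that $\pi(xs)=\pi(x)s=s$ for any lift $x$ of a given $b\in H_e$. The only divergence is in the final step. You treat $xs$ as lying merely in $\korin{\infty}{\,s}$, and therefore pass to a high power $(xs)^k$ (which lands in the group part of the finite monogenic semigroup $(xs)^{\IN}$, hence in $H_s$) and then recover $b$ from $b^k=q((xs)^k)$ by choosing $k$ coprime to the order of $b$ in the periodic group $H_e$. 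All of these steps check out, but the detour is unnecessary: since $\pi(xs)=s$, the element $xs$ belongs to $\korin{\infty}{H_s}$, and then $xs=s\cdot(xs)\in H_s\cdot\korin{\infty}{H_s}\subseteq H_s$ by Lemma~\ref{l:C-ideal}, so $xs$ is already a preimage of $b=q(xs)$ lying in $H_s$; this is exactly how the paper concludes. In particular, your remark that a ``naive lift need not land in $H_s$'' is overly pessimistic --- the corrected lift $xs$ does land in $H_s$ --- though your power-and-coprimality argument is a perfectly sound, if longer, substitute.
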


\begin{proof} Since $X$ is chain-finite, the semilattice $E(X)\cap q^{-1}(e)$ contains the smallest idempotent $s$. We claim that $H_e=q[H_s]$. In fact, the inclusion $q[H_s]\subseteq H_e$ is trivial. To see that $H_e\subseteq q[H_s]$, take any element $y\in H_e\subseteq Y$ and find $x\in X$ with $q(x)=y$. Find $n\in\IN$ such that $x^n\in E(X)$ and $y^n=e$. It follows from $y=q(x)$ that $e=y^n=q(x^n)$ and hence $s=sx^n$ by the minimality of $s$. By Proposition~\ref{p:pi-homo}, $\pi(sx)=\pi(s)\pi(x)=sx^n=s$ and then
$$sx=(ss)x=s(sx)=\pi(sx)sx\in H_{\pi(sx)}=H_s$$by Lemma~\ref{l:C-ideal}. Finally, $y=ey=q(s)q(x)=q(sx)\in q[H_s]$ and hence $H_e=q[H_s]$.
\end{proof}

It remains to prove that for any infinite subset $A\subseteq Y$ the set $AA$ is not a singleton. This follows from the next lemma.

\begin{lemma} For any infinite set $A$ in an almost Clifford semigroup $S$ the set $AA$ is infinite.
\end{lemma}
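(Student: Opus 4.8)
The plan is to first reduce to the case $A\subseteq H(S)$. Since $S$ is almost Clifford, the set $S\setminus H(S)$ is finite, so the infinite set $A$ meets $H(S)$ in an infinite set $A'=A\cap H(S)$. As $A'A'\subseteq AA$, it suffices to prove that $A'A'$ is infinite, so I may assume $A\subseteq H(S)$ from the start. Recall that $H(S)=\bigcup_{e\in E(S)}H_e$ and that the maximal subgroups $H_e$, being the $\mathcal H$-classes of the idempotents $e\in E(S)$, are pairwise disjoint (distinct $\mathcal H$-classes are always disjoint). Hence each $a\in A$ lies in a unique maximal subgroup $H_{e(a)}$, and the idempotent $e(a)$ is determined by $a$.

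Next I would split into two cases according to how $A$ distributes among the subgroups $H_e$. If some idempotent $e$ has $A\cap H_e$ infinite, set $B=A\cap H_e$ and fix $b_0\in B$. Since $H_e$ is a group, the left translation $x\mapsto b_0x$ is a bijection of $H_e$, so $b_0B$ has the same (infinite) cardinality as $B$; as $b_0B\subseteq BB\subseteq AA$, we are done. In the opposite case every $H_e$ meets $A$ in a finite set, so the assignment $a\mapsto e(a)$ takes infinitely many values (otherwise $A$ would be a finite union of finite sets). I would then choose a sequence $(a_n)_{n\in\IN}$ in $A$ whose idempotents $e(a_n)$ are pairwise distinct. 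Each square $a_n^2$ lies in $H_{e(a_n)}$, and since these subgroups are pairwise disjoint, the elements $a_n^2$ are pairwise distinct. As $\{a_n^2:n\in\IN\}\subseteq AA$, the product $AA$ is again infinite.

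The only delicate point is that the two cases genuinely require different arguments: the diagonal trick using the squares $a_n^2$ can collapse when many elements of $A$ share a single subgroup---for instance, in a group of exponent two all squares equal the identity---so in the \emph{concentrated} case one must instead exploit the bijectivity of a single translation inside $H_e$, while in the \emph{spread} case one exploits the disjointness of distinct maximal subgroups. It is worth noting that neither commutativity nor periodicity of $S$ is needed here: the argument uses only the partition of $S$ into $\mathcal H$-classes together with the group structure of each $H_e$.
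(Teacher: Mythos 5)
Your proof is correct and follows essentially the same route as the paper's: restrict to the infinite set $A\cap H(S)$, handle the case of an infinite intersection with a single maximal subgroup via injectivity of translations in a group, and otherwise use that the squares $a^2$ land in pairwise disjoint maximal subgroups. The only cosmetic difference is that the paper indexes the chosen elements by idempotents rather than extracting a sequence, which changes nothing.
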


\begin{proof} Since $S$ is almost Clifford, the set $A\cap H(S)$ is infinite. If for some idempotent $e\in E(S)$ the intersection $A\cap H_e$ is infinite, then for any $a\in A\cap H_e$ the set $a(A\cap H_e)\subseteq (AA)\cap H_e\subseteq AA$ is infinite (since shifts in groups are injective) and hence $AA$ is not a singleton. So,  assume that for every $e\in E(S)$ the intersection $A\cap H_e$ is finite.

Then the set $E=\{e\in E(S):A\cap H_e\ne\emptyset\}$ is infinite. For every $e\in E$, fix an element $a_e\in A\cap H_e$ and observe that $a_e^2\in AA\cap H_e$, which implies that the set $AA\supseteq\{a_e^2:e\in E\}$ is infinite.
\end{proof}

\section{Some open problems}\label{s:final}

In this section we ask two open problems, motivated by the results, obtained in this paper.

\begin{question}
Does there exist a $\Zero$-closed semigroup which is not $\mathsf{T_{\!2}S}$-closed?
\end{question}




\begin{question}\label{id}
Is every ideally $\C$-closed (semi)group projectively $\C$-closed?
\end{question}




\end{document}